\documentclass[reqno]{amsart}

\usepackage[T1]{fontenc} 
\usepackage{babel}

\usepackage{amsmath, amsthm, amssymb}
\usepackage[usenames,dvipsnames,svgnames,table]{xcolor}
\usepackage[colorlinks=true,allcolors=blue]{hyperref}
\usepackage{enumerate}
\usepackage{dsfont}
\usepackage{cite}
\usepackage{comment}
\usepackage{bbm}
\usepackage{todonotes}

\DeclareFontFamily{U}{mathx}{}
\DeclareFontShape{U}{mathx}{m}{n}{<-> mathx10}{}
\DeclareSymbolFont{mathx}{U}{mathx}{m}{n}
\DeclareMathAccent{\widecheck}{0}{mathx}{"71}

	\parskip 1ex
 
\usepackage[toc,page]{appendix}
\usepackage{verbatim}

\allowdisplaybreaks
\theoremstyle{plain}
\newtheorem{lemma}{Lemma}[section]
\newtheorem{theorem}[lemma]{Theorem}

\newtheorem{proposition}[lemma]{Proposition}

\theoremstyle{remark}
\newtheorem{remark}[lemma]{Remark}

\newcommand{\Eb}{\mathbb{E}}

\newcommand{\Var}{\mathrm{Var}}

\newcommand{\bx}{\boldsymbol{x}}

\newcommand{\bk}{\boldsymbol{k}}

\newcommand{\ba}{\boldsymbol{a}}

\newcommand{\bphi}{\boldsymbol{\phi}}

\setlength{\textwidth}{6.38in}
\addtolength{\oddsidemargin}{-.68in}
\addtolength{\evensidemargin}{-.68in}

\makeatletter
\newcommand*{\rom}[1]{\expandafter\@slowromancap\romannumeral #1@}
\makeatother

\def\pp{\partial}

\numberwithin{equation}{section}

\begin{document}


\title[Vicosities Estimation For Primitive Equations]
{Estimation of Anisotropic Viscosities for the Stochastic Primitive Equations}

\date{First circulated: September 12, 2023. This version: March 12 2025.}

\author[I. Cialenco]{Igor Cialenco}
\address[I. Cialenco]
{	Department of Applied Mathematics, Illinois Institute of Technology, 10 W 32nd Str, John T. Rettaliata Engineering Center, Room 220, Chicago, IL 60616, USA.} \email{cialenco@iit.edu}

\author[R. Hu]{Ruimeng Hu}
\address[R. Hu]
{Department of Mathematics, Department of Statistics and Applied Probability, University of California, Santa Barbara, CA, 93106-3080, USA.} \email{rhu@ucsb.edu}

\author[Q. Lin]{Quyuan Lin*}\thanks{*Corresponding author. School of Mathematical and Statistical Sciences, Clemson University, Clemson, SC 29634, USA. E-mail address: quyuanl@clemson.edu}
\address[Q. Lin]
{	School of Mathematical and Statistical Sciences \\
Clemson University\\
Clemson, SC 29634, USA.} \email{quyuanl@clemson.edu}

\begin{abstract}
The viscosity parameters play a fundamental role in applications involving stochastic primitive equations (SPE), such as accurate weather predictions, climate modeling, and ocean current simulations. In this paper, we develop several novel estimators for the anisotropic viscosities in the SPE, using a finite number of Fourier modes of a single sample path observed within a finite time interval. The focus is on analyzing the consistency and asymptotic normality of these estimators. We consider a torus domain and treat strong, pathwise solutions in the presence of additive white noise (in time). Notably, the analysis for estimating horizontal and vertical viscosities differs due to the unique structure of the SPE and the fact that both parameters of interest are adjacent to the highest-order derivative. To the best of our knowledge, this is the first work addressing the estimation of anisotropic viscosities, with the potential applicability of the developed methodology to other models.

\end{abstract}

\maketitle

MSC Subject Classifications: 60H15, 35Q86, 65L09

Keywords: anisotropic viscosities estimation, stochastic primitive equations, inverse problems, statistical analysis of SPDEs, parameter estimation for SPDEs, nonlinear SPDEs, asymptotic normality. 

\section{Introduction}
The study of global weather prediction and climate dynamics relies heavily on the atmosphere and oceans. Ocean currents transport warm water from low latitudes to higher latitudes, where the heat can be released to the atmosphere to balance the Earth's temperature. A widely accepted model used to describe the motion and state of the atmosphere and ocean is the Boussinesq system, which combines the Navier–Stokes equations (NSE) with rotation and a heat (or salinity) transport equation. Due to the extraordinary organization and complexity of the flow in the atmosphere and ocean, the full governing equations appear to be challenging to analyze, at least for the foreseeable future. In particular, the global existence and uniqueness of smooth solutions to the $3D$ NSE is one of the most daunting mathematical problems. Fortunately, when studying oceanic and atmospheric dynamics at the planetary scale, the vertical scale (a few kilometers for the ocean, 10-20 kilometers for the atmosphere) is much smaller than the horizontal scale (thousands of kilometers). As a result, the large-scale ocean and atmosphere satisfy the hydrostatic balance based on scale analysis, meteorological observations, and historical data. Therefore, the primitive equations (PE), also known as the hydrostatic Navier-Stokes equations, are derived as the asymptotic limit of the small aspect ratio between the vertical and horizontal length scales from the Boussinesq system\cite{azerad2001mathematical,li2019primitive,li2022primitive,furukawa2020rigorous}. Due to their impressive accuracy, the following $3D$ viscous PE is a widely used model in geophysics (see, e.g., \cite{blumen1972geostrophic,gill1976adjustment,gill1982atmosphere,hermann1993energetics,holton1973introduction,kuo1997time,plougonven2005lagrangian,rossby1938mutual}):
\\

\begin{subequations}\label{PE-system}
\begin{align}
    d V + (V\cdot \nabla_h V + w\pp_z V -\nu_h \Delta_h V - \nu_z \pp_{zz}V + f_0 V^\perp + \nabla_h p) dt & = \sigma dW, 
    \\
    \pp_z p &  = 0,
    \\
    \nabla_h \cdot V + \pp_z w & = 0,
\end{align}
\end{subequations}
with initial condition $V(0)=V_0.$ Here the horizontal velocity $V = (u, v)$, vertical velocity $w$, and the pressure $p$ are functions of time and space $(t, x, y, z)=(t,\bx)$. The $2D$ horizontal gradient and Laplacian are denoted by $\nabla_h = (\partial_{x}, \partial_{y})$ and $\Delta_h = \partial_{xx} + \partial_{yy}$, respectively. The nonnegative constants $\nu_h, \nu_z$, are the horizontal viscosity, and the vertical viscosity, respectively. The parameter $f_0 \in \mathbb{R}$ stands for the Coriolis parameter, and we use the notation $V^\perp = (-v,u)$. The noise term $\sigma dW$ represents the external stochastic forcing,  which is rigorously defined below in  \eqref{eqn:noise}. For simplicity, we drop the temperature and salinity in the original primitive system, but the full model can be studied similarly, albeit with more tedious details and computations.

We study system \eqref{PE-system} in the torus $\mathbb T^3$, subject to the following boundary conditions:
\begin{gather*}
	V, w, p \text{ are periodic in } (x,y,z) \text{ with period } 2\pi,  \\
V, p \text{ are even in $z$, and } w \text{ is odd in } z.
\end{gather*}
Note that the symmetry condition in $z$ variable is invariant under the dynamics of system \eqref{PE-system}.
Here $w$ can be written as
$
w(x,y,z) = -\int_0^z \nabla_h \cdot u(x,y,\tilde{z})d\tilde{z}.
$

The viscosity parameter plays a fundamental role in applications involving \eqref{PE-system}, such as the accurate prediction of weather patterns, climate models, and ocean currents. In the literature, depending on whether the system has horizontal or vertical viscosity, the following four main models are considered: 
\begin{enumerate}
    \item PE with full viscosity, i.e., $\nu_h>0, \nu_z>0$. For the deterministic case, global well-posedness of strong solutions in Sobolev spaces was established in \cite{cao2007global,kobelkov2006existence,kukavica2007regularity,hieber2016global}. The well-posedness of the stochastic version was investigated in \cite{glatt2008stochastic,glatt2011pathwise,brzezniak2021well,debussche2011local,debussche2012global,agresti2022stochastic-1,agresti2022stochastic-2}.

    \item PE with only horizontal viscosity, i.e., $\nu_h>0, \nu_z=0$. In \cite{cao2016global,cao2017strong,cao2020global}, the authors consider deterministic horizontally viscous PE with anisotropic diffusivity and establish global well-posedness. The global well-posedness of the stochastic model is studied in \cite{saal2023stochastic}.
    
    \item PE with only vertical viscosity, i.e., $\nu_h=0, \nu_z>0$. Without horizontal viscosity, the deterministic PE is shown to be ill-posed in Sobolev spaces \cite{renardy2009ill}. To achieve well-posedness, one can consider some additional weak dissipation \cite{cao2020well}, assume that the initial data have Gevrey regularity and be convex \cite{gerard2020well}, or be analytic in the horizontal direction \cite{paicu2020hydrostatic,lin2022effect}. It is worth mentioning that whether smooth solutions exist globally or form singularities in finite time is still an open question. On the other hand, the last two authors of this paper studied the stochastic model and established local well-posedness with either analytic initial data \cite{hu2022local} or Sobolev initial data with convex condition \cite{hu2023pathwise}.

    \item Inviscid PE, i.e., $\nu_h=0, \nu_z=0$. The deterministic inviscid PE is ill-posed in Sobolev spaces \cite{renardy2009ill,han2016ill,ibrahim2021finite}. Moreover, smooth solutions of the inviscid PE can form singularity in finite time \cite{cao2015finite,wong2015blowup,ibrahim2021finite,collot2023stable}. On the other hand, with either some special structures (local Rayleigh condition) on the initial data in $2D$, or real analyticity in all directions for general initial data in both $2D$ and $3D$, the local well-posedness can be achieved \cite{brenier1999homogeneous,brenier2003remarks,ghoul2022effect,grenier1999derivation,kukavica2011local,kukavica2014local,masmoudi2012h}. Similar local well-posedness results hold for the stochastic case \cite{hu2022local,hu2023pathwise}.
\end{enumerate}

The development of statistical methods for estimating the parameters in a model has two significant practical implications. Firstly, when we believe that the considered family of models accurately describes the underlying physical phenomena, but the modeler lacks complete knowledge of the specific physical parameters within this family, a \emph{statistical estimator} is a tool for identifying these unknown parameters. Secondly, when the observer already has a priori information about the physical quantities in the model but harbors doubts about the overall model's validity, an \emph{estimator}, along with its asymptotic properties, serves as the initial tool for testing and validating the underlying model.

In this paper, we establish several estimators for the viscosity parameters $\nu_h, \nu_z$ in \eqref{PE-system}, assuming that all other model parameters are known.  We assume that the observations are performed in the Fourier space, and the observer has access to a  single sample path of a  finite collection of Fourier modes of the solution observed over a finite time interval $[0,T]$. The focus is on analyzing the consistency and asymptotic normality of these estimators. We emphasize that generally speaking, in PE $\nu_h\neq \nu_z$, which, as we will see, makes the statistical problem quite different from those studied in the existing literature. To the best of our knowledge, this is the first work to address the estimation of anisotropic viscosities. We believe that this methodology may serve as a foundation for tackling inference problems involving anisotropic parameters in other models. 
The proposed estimators lead to the following main results.

{\noindent\bf Theorem 1.} [Consistency of estimators; Theorem~\ref{thm:consistency}] Let $\nu^N_{hj}$, $j=1,2,3$ be the estimators for $\nu_h$, and $\nu^N_{zj}$ and $\widehat{\nu^N_{zj}}$, $j=1,2,3$ be the estimators for $\nu_z$ described in Section~\ref{sec:derivation}.  Under some suitable assumptions, $\nu^N_{h1}$, $\nu^N_{z1}$, and $\widehat{\nu^N_{z1}}$ are weakly consistent estimators of the true parameters $\nu_h$ and $\nu_z$,  where $N$ is the number of Fourier modes of a single sample path observed within a finite time interval. That is,
    \[
    \lim\limits_{N\to\infty} \nu^N_{h1} = \nu_h, \quad \lim\limits_{N\to\infty} \nu^N_{z1} = \lim\limits_{N\to\infty} \widehat{\nu^N_{z1}} = \nu_z, \quad \text{in probability.}
    \] 
    Under further technical assumptions, $\nu^N_{h2}$ and $\nu^N_{h3}$ are weakly consistent estimators of the true parameters $\nu_h$, and $\nu^N_{z2}$ , $\widehat{\nu_{z2}^N}$, $\nu^N_{z3}$, $\widehat{\nu_{z3}^N}$ are weakly consistent estimators of the true parameters $\nu_z$.

{\noindent\bf Theorem 2.} [Asymptotic normality, Theorem~\ref{thm:normality}]
    Under proper assumptions, $\nu^N_{h1}$ and $\widehat{\nu_{z1}^N}$ are jointly asymptotically normal with rate $N^2$, i.e., 
    \begin{equation*}
         N^2 \left(
        \begin{array}{c}
        \nu^N_{h1}-\nu_h\\
        \widehat{\nu^N_{z1}}-\nu_z
        \end{array}
        \right) \stackrel{\mathcal{D}} \longrightarrow \Xi,
    \end{equation*}
where $\Xi$ is a two-dimensional normal random variable with mean zero and 
some explicit covariance matrix $\Sigma$.

By leveraging the results of joint asymptotic normality, one can construct asymptotic confidence intervals for any linear combination of $\nu_h$ and $\nu_z$. For instance, a $(1-\alpha)$-confidence interval for $\nu_h$ is given by $[\nu_{h1}^N - z_{\alpha/2}\sqrt{\Sigma_{11}}/N, \nu_{h1}^N + z_{\alpha/2}
\sqrt{\Sigma_{11}}/N]$, where $\nu_{h1}^N$ is the estimator we proposed for $\nu_h$,  
and $z_{\alpha/2}$ denotes the upper $\alpha/2$-quantile of a standard normal.

{\noindent \bf Related literature.} Statistical analysis of SPDEs is a relatively new research field with many recent developments, mostly for linear equations. Our work falls within the category of the so-called spectral methods, where the observations are done in the Fourier space and continuously in time over some finite time interval. This sampling scheme is one of the most widely studied in the literature, and for a comprehensive understanding of this classical method and its historical developments, we direct the readers to the survey \cite{cialenco2018statistical}, as well as \cite{CialencoDelgado-VencesKim2019,CialencoKimLototsky2019,DelgadoVencesPavonEspanol2023}. Recent developments encompass alternative sampling methods and inference techniques such as: estimation using local measurements \cite{altmeyer2021nonparametric,altmeyer2023parameter,JanakReiss2023}; assuming discrete time/space observations in the physical domain \cite{hildebrandt2021parameter,cialenco2020note,cialenco2023statistical,CialencoKim2020,AssaadTudor2021,KainoMasayukiUchida2019,AssaadEtAl2022,GamainTudor2023,TonakiEtAl2023,TonakiEtAl2023a}; methods involving data assimilation \cite{cotter2019numerically,nusken2019state,PasemannEtAl2023}; Bayesian inference \cite{reich2020posterior,yan2020bayesian,ZCG2018}.

A special place in the statistical analysis of SPDEs takes the nonlinear equations. The first attempt traces back to \cite{cialenco2011parameter}, where the authors explored the spectral approach and Maximum Likelihood formalism to study the estimation of the viscosity coefficient for the 2D Navier-Stokes equations. These ideas were adapted and extended to general reaction-diffusion systems in \cite{pasemann2020drift,Pasemann2021}. Similarly, using the same analytical tools from SPDEs, in \cite{altmeyer2023parameter}, the authors study reaction-diffusion systems in the realm of local measurements. Se also \cite{HildebrandtTrabs2021,Gaudlitz2023,GaudlitzReiss2023,AssaadTudor2021}. A fundamental step in dealing with nonlinear terms is the well-known PDE method of splitting the solution in the linear and nonlinear component, and using slightly better regularity of the nonlinear terms to prove that the nonlinear terms entering the estimator vanish as the flow of information increases. Similar to the general theory for PDEs or SPDEs, the use of fine properties of the solutions, such as regularity, for interesting and practically important SPDEs is done case by case, exploring the particular structure of the underlying equations. Primitive equations, studied in this work, fall in this class of models. Although we take a similar direction as in the spectral approach, the considered parameter estimation problems can not be treated directly by the existing results. We emphasize that in contrast to the existing literature, we aim to estimate two different parameters,  $\nu_h$ (the horizontal viscosity) and $\nu_z$ (the vertical viscosity), both next to the highest order derivatives. 
We employ an additional decomposition of the solution in its barotropic and baroclinic components, which allows us to separate $\nu_h$ in a single equation, and hence construct and study MLE type estimators. We show that these estimators are weakly consistent and asymptotically normal. However, such analysis can not be extended to $\nu_z$, in particular in establishing the rate of convergence. We propose a novel estimator $\widehat \nu_{z1}^N$ for $\nu_z$, and prove its asymptotic normality.  

We note that in the case of continuous-time observations, the parameter $\sigma$ can be determined exactly using standard quadratic variation arguments, hence assumed known without loss of generality. We also assume that all other parameters, $f_0$ and $\gamma$ (defined in the noise $\sigma$; see \eqref{eqn:noise} below), are known. Estimating these parameters constitutes a separate and distinct statistical problem, which is beyond the scope of this work.

Finally, we mention that in the context of deterministic PDEs, related works include, but are not limited to, the parameter recovery via data assimilation \cite{carlson2020parameter,carlson2021dynamically,martinez2022convergence}, and inverse problems for PDEs \cite{isakov2006inverse}.

The rest of the paper is organized as follows. In Section~\ref{sec:preliminary}, we introduce the notations and reformulate the original system~\eqref{PE-system}. In Section~\ref{sec:linear}, we analyze the associated linear system, providing various SPDE estimates and regularity results. Section~\ref{sec:derivation} is devoted to deriving the estimators and formulating the main results regarding their asymptotic properties. We study the regularity of the solution to \eqref{PE-system} in Section~\ref{sec:regularity}, and the proof of the main results is presented in Section~\ref{sec:proof}. In the Appendix, we collect some auxiliary technical results and, for the reader's convenience, recall some limit theorems from stochastic analysis.

\section{Preliminaries}\label{sec:preliminary}

In this section, we introduce notations and present some necessary preliminary results.  The universal constants $c$ and $C$ appearing below may change from line to line. When needed, we use subscripts to indicate the dependence of the constant on certain parameters, {\it e.g.}, we write $C_r$ to emphasize that the constant depends on $r$. For sequences $\{a_n\}_{n \geq 1}$ and $\{b_n\}_{n \geq 1}$, the notation $a_n\sim b_n$ means that $\lim\limits_{n\to \infty} \frac{a_n}{b_n}=C\neq 0$ if the limit exists, or $ c\leq \liminf\limits_{n\to \infty} \frac{a_n}{b_n} \leq  \limsup\limits_{n\to \infty} \frac{a_n}{b_n} \leq C$ for some constants $C < \infty$ and $c>0$ if the limit does not exist. If $\lim\limits_{n\to \infty} \frac{a_n}{b_n}=1$, we write $a_n\asymp b_n$. For $a,b\in\mathbb{R}$, the notation $a\lesssim b$ means that $a\leq Cb$ for some positive constant $C>0$. The notation $\mathbb{T}^3:=\mathbb R^3/2\pi\mathbb Z^3$ stands for the three-dimensional torus. We also denote by $z^*$ the complex conjugate of $z$. The notation $N \gg 1$ will be frequently employed when computing asymptotic orders, meaning that the computation is valid for values of $N$ much larger than 1. Boldface letters will be used to denote vectors, e.g. $\bx\in\mathbb{T}^3$. 

\subsection{Functional settings}
Let $L^2(\mathbb T^3)$ be the usual $L^2$ functional space consisting of square Lebesgue integrable $2\pi$-periodic functions, equivalently identified with $2\pi$-periodic functions on $\mathbb{R}^3$, endowed with norm  $\|\varphi\| = \|\varphi\|_{L^2} = \left(\int_{\mathbb T^3} |\varphi|^2 d\bx\right)^{\frac12}$. For a function $f$, we define its barotropic mode $\overline f$, and baroclinic mode $\widetilde f$, respectively, by 
\[
\overline f = \frac{1}{2\pi}\int_0^{2\pi} f(x,y,\tilde{z}) d\tilde{z}, \ \text{ and } \ \widetilde f= f-\overline{f}, \text{ respectively.}
\]
For $\bk \equiv (k_1,k_2,k_3)=(\bk',k_3)\in \mathbb{Z}^3$, we put
\begin{equation}
\phi_{\bk} =
\begin{cases}
\sqrt{2}e^{i\left( k_1 x_1 + k_2 x_2 \right)}\cos(k_3 z) & \text{if} \;  k_3\neq 0,\\
e^{i\left( k_1 x_1 + k_2 x_2 \right)} & \text{if} \;  k_3=0, \label{phik}
\end{cases}
\end{equation}
and define the space $H$ by
\begin{align*}
    H := \Bigg\{ \bphi \in L^2(\mathbb{T}^3) \; : \; \bphi= \sum\limits_{\bk\in \mathbb{Z}^3 ,\bk\neq 0} \ba_{\bk} \phi_{\bk}, \; &\ba_{-k_1, -k_2,k_3}=\ba_{k_1,k_2,k_3}^{*}, \; 
    \ba_{\bk}\cdot \bk' = 0 \text{ when $k_3=0$} \Bigg\}.
\end{align*}
Then $H$ is a closed subspace of $L^2(\mathbb T^3)$. In particular, the functions $f\in H$ are real-valued, even in $z$, have spatial zero means and satisfy $\nabla_h \cdot \overline f=0$. For our system \eqref{PE-system},  we have $V\in H$. 

For each $N\in\mathbb N$, let $H_N = \text{Span} \{\phi_{\bk}: |{\bk}|\leq N  \}$ be the finite dimensional subspace of $H$. Denote by $P_N$ the projection from $H$ to $H_N$, and by $\mathcal P_h$ the $2D$ Leray projection such that $\mathcal P_h \overline \varphi = \overline{\varphi} - \nabla_h  \Delta_h^{-1} \nabla_h \cdot \overline{\varphi}$, where $\overline{\varphi}$ is a $2D$ vector (see, for example \cite{constantin1988navier}). Here $ \Delta_h^{-1} $ represents the inverse of the Laplacian operator in $ \mathbb T^2 $ with zero mean value. Furthermore, let $\mathcal P$ be the hydrostatic Leray projection, that is $\mathcal P \varphi = \mathcal P_h \overline \varphi + \tilde \varphi$.

Denote by $w(u) = -\int_0^z \nabla_h \cdot u (x,y,\tilde{z})d\tilde{z}$, and define the nonlinear term 
$$B(f,g)=f\cdot \nabla_h g + w(f) \pp_z g.$$
Let $A= -\mathcal P \Delta$ be the hydrostatic Stokes operator (cf. \cite{giga2017bounded}). Moreover, we write $A_h = -\Delta_h$ and $A_z = - \partial_{zz}.$ For a given $\alpha \geq 0$, let 
$$\mathcal D(A^\alpha) = \left\{f\in H: \sum\limits_{\bk\in\mathbb Z^3,\bk\neq 0} |\bk|^{4\alpha} |f_{\bk}|^2 <\infty \right\},$$ 
where $f_{\bk} = \langle f ,\phi_{\bk}\rangle := \int_{\mathbb T^3} f \phi_{\bk}^* d\bx.$ 
As in the periodic case, $\mathcal P_h$ commutes with $\Delta_h$, and for $f\in \mathcal D(A)$ we have
\[
Af = -\mathcal P \Delta f = - \mathcal P_h \Delta_h \overline f - \Delta \widetilde f = - \Delta_h \overline f - \Delta \widetilde f = -\Delta f = (A_h+A_z) f.
\]

Next, we describe the stochastic term $\sigma dW$ in \eqref{PE-system}. Let $(\Omega, \mathcal{F}, \mathbb{F}, \mathbb{P})$ be a stochastic basis with a filtration $\mathbb{F} = (\mathcal{F}_t)_{t \geq 0}$ that supports a sequence of independent Brownian motions $\{W_{\bk}\}_{\bk \in \mathbb{Z}^3}$. We can formally represent $W$ as $W = \sum_{\bk \in \mathbb{Z}^3, \bk \neq 0} \phi_{\bk} W_{\bk}$. Let $L_2(H_1, H_2)$ denote the collection of Hilbert–Schmidt operators from $H_1$ to $H_2$. Throughout this work, we view $\sigma$ as an operator in $L_2(H,H)$, and consider the following additive noise:
\begin{equation}\label{eqn:noise}
    \sigma dW = \sigma_0\sum\limits_{\bk \in \mathbb Z^3, \bk\neq 0} |{\bk}|^{-\gamma} c_{{\bk}}\phi_{\bk} dW_{\bk},
\end{equation}
with $\gamma>\frac32$, where $\sigma_0>0$ is a fixed positive constant. Here $c_{\bk}\in \mathbb R^2$ with $|c_{\bk}|=1$, and $c_{\bk} = \frac{\bk'^{\perp}}{|\bk|}$ when $k_3=0$. Then, clearly $\sigma\in L_2(H, \mathcal D(A^{\frac\gamma2-\frac34-\varepsilon}))$, for any $\varepsilon>0$. Notably, when $k_3=0$, we have $c_{\bk}\cdot \bk' = 0$, resulting in  $\nabla_h \cdot \overline{c_{\bk}\phi_{\bk}}=0$. We highlight that this choice of noise ensures that the condition $\int_{\mathbb T^3} V d\bx = 0$ holds true. 

\subsection{Reformulation of \eqref{PE-system}}
To derive estimators  of the two parameters of interest  $\nu_h$ and $\nu_z$ in \eqref{PE-system}, and to prove their statistical properties, it will be helpful to rewrite \eqref{PE-system} so that one equation contains only one parameter. 
For this purpose, a natural approach is the barotropic and baroclinic decomposition, which is a classical technique in the analysis of PE (see, for example, \cite{cao2007global}). Consequently, we decompose $V$ into the barotropic part $\overline{V}$ and the baroclinic part $\widetilde{V}$. By applying $\mathcal P_h$ to $\overline V$, and noting that $\mathcal P_h$ commutes with $\Delta_h$ in the periodic setting, we rewrite the system \eqref{PE-system} as 
\begin{subequations}\label{PE-original}
\begin{align}
    &d \overline{V} + \left( \mathcal P_h\overline{B(V,V)} + \nu_h A_h \overline{V}\right)dt
    = \sigma_0\sum\limits_{k_3=0,\bk\neq 0} |{\bk}|^{-\gamma} c_{\bk}\phi_{\bk} dW_{\bk}, \label{PE-barotropic}
    \\
    &d \widetilde V + \left(\widetilde{B(V,V)} + \nu_h A_h \widetilde V + \nu_z A_z \widetilde V + f_0 \widetilde V^\perp \right)dt = \sigma_0\sum\limits_{k_3\neq 0} |{\bk}|^{-\gamma} c_{\bk}\phi_{\bk} dW_{\bk}. \label{PE-baroclinic}
\end{align}
\end{subequations}
Note that the rotation term $f_0 \overline{V}^\perp$ is part only of the pressure gradient, thus vanishes under Leray projection. Additionally, as $A_z \overline V=0$, we obtain \eqref{PE-barotropic} that contains only $\nu_h$.

In Section~\ref{sec:derivation}, we begin by deriving estimators for $\nu_h$ using \eqref{PE-barotropic}, and subsequently, we substitute these estimators into \eqref{PE-baroclinic} to obtain estimators for $\nu_z$. This approach leverages all available information (i.e., all Fourier modes) and yields consistent estimators for both $\nu_h$ and $\nu_z$. However, while we can show that the estimator for $\nu_h$ is asymptotically normal, we were unable to do so for $\nu_z$ (for further discussion, see Remark~\ref{rmk:nolimit}). To address this, we introduce a novel projection: for a fixed positive rational number $q\in\mathbb Q$ and for a generic function $f= \sum\limits_{\bk\in\mathbb Z^3, \bk\neq 0} f_{\bk} \phi_{\bk}$, we define the projection $\widehat f$ by
$$
\widehat f = \sum\limits_{|\bk'|=\sqrt{q}|k_3|\neq 0} f_{\bk} \phi_{\bk} \equiv \sum\limits_{|\bk'|^2=q|k_3|^2\neq 0} f_{\bk} \phi_{\bk}.
$$
Immediately, by writing
$
\widehat V = \sum\limits_{|\bk'|=\sqrt{q}|k_3|\neq 0} V_{\bk} \phi_{\bk},
$
we deduce the corresponding equation for $\widehat V$:
\[
d \widehat V + \left(\widehat{B(V,V)} + \nu_h A_h \widehat V + \nu_z A_z \widehat V + f_0 \widehat{V}^\perp\right)dt
     = \sigma_0\sum\limits_{|\bk'|=\sqrt{q}|k_3|\neq 0} |{\bk}|^{-\gamma} c_{\bk}\phi_{\bk} dW_{\bk}.
\]
The benefit of this projection is that $A_h \widehat f = q A_z \widehat f$. Therefore, the above equation can be written as:
\begin{equation*}
   d \widehat V + \left(\widehat{B(V,V)} + (\nu_h + \frac1q \nu_z) A_h \widehat V + f_0 \widehat{V}^\perp\right)dt
     =\sigma_0\sum\limits_{|\bk'|=\sqrt{q}|k_3|\neq 0} |{\bk}|^{-\gamma} c_{\bk}\phi_{\bk} dW_{\bk}. 
\end{equation*}

Mimicking the approach used for $\nu_h$ via \eqref{PE-barotropic}, one might attempt to isolate $\nu_z$ by computing the horizontal average of the original system \eqref{PE-system}. However, this method is not effective, as explained further in Remark~\ref{rmk:hor-ave-not-work}.  The main reason is that in the $3D$ case, such a projection reduces the dimensions by two, unlike the one-dimensional reduction for $\overline V$. This loss of two dimensions significantly impacts the estimator, causing a breakdown in the proof of Lemma~\ref{lemma:ratio-1}.

\section{Analysis of the Linear System}\label{sec:linear}
One key idea in proving the asymptotic properties of the derived estimators relies on the so-called splitting argument often used in the general theory of nonlinear PDEs. That is, decomposing the solution in its linear part that solves the corresponding linear equation and the nonlinear residual. Subsequently, we break down the estimators into elements that correspond to the linear and nonlinear parts of the solution. Analysis of each part fundamentally relies on the exact order of continuity of the linear and nonlinear components.  In this section, we present some analytical properties of the linear part of the solution, while the nonlinear part and the entire solution are studied in Section~\ref{sec:regularity}. 

Consider the linear system associated with \eqref{PE-original}: 
\begin{align}\label{PE-original-linear}
    &d U +  (\nu_h A_h U  + \nu_z A_z U + f_0 \mathcal P U^{\perp} ) dt 
    = \sigma_0\sum\limits_{\bk\neq 0} |{\bk}|^{-\gamma} c_{\bk}\phi_{\bk} dW_{\bk},
\end{align}
with the given initial condition $U(0)$.
Let $U_{\bk} = \langle U, \phi_{\bk}\rangle$ be the Fourier coefficients of the solution $U$. In addition,  we denote by $\overline{U}_{\bk} = U_{\bk}$, when $k_3=0$, and  by $\widetilde{U}_{\bk} = U_{\bk}$ when $k_3\neq 0$.
Note that $U_{\bk}\in \mathbb C^2$ is an Ornstein–Uhlenbeck process with the dynamics
\begin{align}
    &d \overline{U}_{\bk} + \nu_h |\bk|^2 \overline{U}_{\bk} dt = \sigma_0|{\bk}|^{-\gamma} c_{\bk} dW_{\bk}, \text{ \, when\, } k_3=0, \label{eq:barU}
    \\
    &d \widetilde{U}_{\bk}+ \Big( (\nu_h |\bk'|^2 + \nu_z |k_3|^2) \widetilde{U}_{\bk} + f_0 \widetilde{U}_{\bk}^{\perp}\Big) dt =  \sigma_0|{\bk}|^{-\gamma} c_{\bk} dW_{\bk}, \text{ \, when\, } k_3\neq 0. \label{eq:tildeU}
\end{align}
Solving \eqref{eq:barU} gives
\begin{align}\label{eqn:Ukbar}
   &\overline{U}_{\bk}(t)  =   \overline{U}_{\bk}(0) e^{-\nu_h |\bk|^2 t} + \sigma_0|{\bk}|^{-\gamma} c_{\bk} \int_0^t e^{-\nu_h |\bk|^2 (t-s)} dW_{\bk}(s), \text{ \, when\, } k_3=0.
\end{align}
For $\widetilde{U}_{\bk}$, we rewrite equation \eqref{eq:tildeU} as
\begin{align*}
    d\widetilde{U}_{\bk} + M \widetilde{U}_{\bk}  dt = \sigma_0|{\bk}|^{-\gamma} c_{\bk} dW_{\bk},
\end{align*}
where
\[
M=\begin{pmatrix}
\nu_h |\bk'|^2 + \nu_z |k_3|^2  &  -f_0
\\
f_0 & \nu_h |\bk'|^2 + \nu_z |k_3|^2
\end{pmatrix},
\]
whose solution reads
\begin{equation}\label{eqn:Uktilde}
    \widetilde{U}_{\bk}(t) =  e^{-Mt}\widetilde{U}_{\bk}(0)  + \sigma_0|{\bk}|^{-\gamma} \int_0^t e^{-M(t-s)} c_{\bk} dW_{\bk}(s).
\end{equation}

We immediately obtain the following results about concerning the moments of the  Fourier modes $\overline U_{\bk}$ and $\widetilde U_{\bk}$.

\begin{lemma}\label{lem:U}
    Suppose that $U(0) = 0$. 
    Then, as $|\bk|\to \infty$, one has
\begin{align}
    &\mathbb E \int_0^T |\overline{U}_{\bk}|^2(t) dt \asymp \frac{\sigma_0^2 T|\bk|^{-2\gamma -2}}{2\nu_h}, \quad k_3=0, \label{order-Ukbar} \\
    &\mathbb E \int_0^T |\widetilde{U}_{\bk}|^2(t) dt \sim \frac{\sigma_0^2 T|\bk|^{-2\gamma}}{2(\nu_h |\bk'|^2 + \nu_z |k_3|^2)} , \quad k_3\neq 0, \label{order-Uktilde}
\end{align}
and 
\begin{equation}\label{order-Var}
    \Var\left[\int_0^T  |\overline{U}_{\bk}|^2(t) dt \right] \sim |\bk|^{-(4\gamma +6)},  \quad \Var\left[\int_0^T  |\widetilde{U}_{\bk}|^2(t) dt \right]  \sim \frac{|\bk|^{-4\gamma}}{(\nu_h |\bk'|^2 + \nu_z |k_3|^2)^3}.
\end{equation}
\end{lemma}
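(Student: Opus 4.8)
The plan is to compute everything explicitly from the closed-form solutions \eqref{eqn:Ukbar} and \eqref{eqn:Uktilde}, exploiting that with $U(0)=0$ the modes $\overline{U}_{\bk}$ and $\widetilde{U}_{\bk}$ are (vector-valued) centered Gaussian processes, so all the quantities in question are determined by their covariance functions. For the barotropic modes, $\overline{U}_{\bk}(t)$ is a $2$-dimensional Ornstein--Uhlenbeck process with decay rate $\nu_h|\bk|^2$ driven by noise of amplitude $\sigma_0|\bk|^{-\gamma}$. First I would record the standard OU covariance identity $\Eb\,\overline{U}_{\bk}(t)\cdot\overline{U}_{\bk}(s)^* = \sigma_0^2|\bk|^{-2\gamma}\,\frac{e^{-\nu_h|\bk|^2|t-s|}-e^{-\nu_h|\bk|^2(t+s)}}{2\nu_h|\bk|^2}\cdot(\text{dim factor})$, being careful about the $|c_{\bk}|=1$ normalization and the fact that $c_{\bk}\in\mathbb{R}^2$ drives a single scalar Brownian motion $W_{\bk}$ (so the two components of $\overline{U}_{\bk}$ are perfectly correlated along $c_{\bk}$); what matters is $\Eb|\overline{U}_{\bk}(t)|^2 = \sigma_0^2|\bk|^{-2\gamma}\frac{1-e^{-2\nu_h|\bk|^2 t}}{2\nu_h|\bk|^2}$. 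Integrating in $t$ over $[0,T]$ gives $\Eb\int_0^T|\overline{U}_{\bk}|^2\,dt = \frac{\sigma_0^2|\bk|^{-2\gamma}}{2\nu_h|\bk|^2}\left(T - \frac{1-e^{-2\nu_h|\bk|^2 T}}{2\nu_h|\bk|^2}\right)$, and as $|\bk|\to\infty$ the correction term is $O(|\bk|^{-2})$ relative to the leading $T$, yielding the $\asymp$ in \eqref{order-Ukbar}. For the variance in \eqref{order-Var}, I would use that for a centered Gaussian process, $\Var\int_0^T|\overline{U}_{\bk}|^2\,dt = 2\int_0^T\int_0^T \big(\Eb[\overline{U}_{\bk}(t)\cdot\overline{U}_{\bk}(s)^*]\big)^2\,dt\,ds$ (Wick/Isserlis); each covariance entry carries a factor $\sigma_0^2|\bk|^{-2\gamma}/(2\nu_h|\bk|^2)$, squaring gives $|\bk|^{-4\gamma-4}$, and the remaining double integral of $(e^{-\nu_h|\bk|^2|t-s|}+\dots)^2$ contributes another $|\bk|^{-2}$ from the exponential concentration near the diagonal, for the stated $|\bk|^{-(4\gamma+6)}$.

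For the baroclinic modes the structure is the same but $M$ replaces the scalar rate: $M = (\nu_h|\bk'|^2+\nu_z|k_3|^2)I + f_0 J$ where $J=\begin{pmatrix}0&-1\\1&0\end{pmatrix}$, so $e^{-Mt} = e^{-\lambda_{\bk}t}R(f_0 t)$ with $\lambda_{\bk}:=\nu_h|\bk'|^2+\nu_z|k_3|^2$ and $R$ a rotation. Since rotations are orthogonal, $|e^{-M(t-s)}c_{\bk}| = e^{-\lambda_{\bk}(t-s)}$, so the Itô isometry gives $\Eb|\widetilde{U}_{\bk}(t)|^2 = \sigma_0^2|\bk|^{-2\gamma}\frac{1-e^{-2\lambda_{\bk}t}}{2\lambda_{\bk}}$; integrating in $t$ and noting that as $|\bk|\to\infty$ we have $\lambda_{\bk}\to\infty$ (since both viscosities are positive and $|\bk|^2 = |\bk'|^2 + |k_3|^2 \le 2\max$, and on the relevant regime $\lambda_{\bk}\gtrsim|\bk|^2$... actually one must be slightly careful: $\lambda_{\bk}$ could be as small as $\min(\nu_h,\nu_z)|\bk|^2$, which still $\to\infty$) gives the leading term $\frac{\sigma_0^2 T|\bk|^{-2\gamma}}{2\lambda_{\bk}}$ with lower-order correction, matching \eqref{order-Uktilde} with $\sim$. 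The variance computation is identical to the barotropic case with $\nu_h|\bk|^2$ replaced by $\lambda_{\bk}$: two factors of $1/(2\lambda_{\bk})$ from squaring the covariance and one more from the diagonal concentration of $e^{-2\lambda_{\bk}|t-s|}$, giving $|\bk|^{-4\gamma}/\lambda_{\bk}^3$.

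The main obstacle, and the only genuinely delicate point, is bookkeeping the error terms to confirm the \emph{exact} asymptotic orders (the distinction between $\asymp$, i.e. ratio $\to 1$, for the barotropic expectation versus merely $\sim$ for the baroclinic one, and getting the power of $|\bk|$ exactly right in \eqref{order-Var}). Concretely I need: (a) that $T$ genuinely dominates $(1-e^{-2\nu_h|\bk|^2T})/(2\nu_h|\bk|^2)$, which is clear since the latter is $O(|\bk|^{-2})\to 0$; (b) that in $\Var$, the double integral $\int_0^T\!\int_0^T e^{-2r|t-s|}\,dt\,ds$ behaves like $T/r + O(1/r^2)\sim T/r$ as $r=\nu_h|\bk|^2\to\infty$, together with checking the cross terms and the $e^{-r(t+s)}$ boundary terms contribute only at order $O(1/r^2)$, hence are negligible relative to the leading $T/r$; and (c) for the baroclinic variance, verifying the rotation matrix entries — which oscillate but are bounded by $1$ — do not change the order, since after squaring and summing the $\cos^2 + \sin^2$ structure collapses the angular dependence. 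I would carry these out by writing the covariance explicitly, expanding $(\cdot)^2$, and integrating term by term, keeping only that each surviving term is $\Theta$ of the claimed order. Everything else is routine Gaussian calculus.
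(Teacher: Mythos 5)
Your proposal is correct and follows essentially the same route as the paper: read off the explicit Ornstein--Uhlenbeck solutions, use the It\^o isometry (and the fact that $M+M^T=2(\nu_h|\bk'|^2+\nu_z|k_3|^2)I$, equivalently your $e^{-Mt}=e^{-\lambda_{\bk}t}R(f_0t)$ factorization) for the expectations, and Gaussian fourth-moment calculus for the variances. The only cosmetic difference is that you compute the variance via the Isserlis identity $\Var\int_0^T|X|^2\,dt=2\int_0^T\!\!\int_0^T(\Eb[X(t)X(s)])^2\,dt\,ds$, whereas the paper expands $\Eb[(\int_0^T|\overline U_{\bk}|^2dt)^2]$ directly by splitting the stochastic integral at $s<t$; both yield the same cancellation of the leading term and the same order $|\bk|^{-4\gamma-6}$.
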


\begin{proof}
The results for $\overline U_{\bk}$ follow by direct computations and using It\^o's isometry. Regarding $\mathbb E \int_0^T |\widetilde{U}_{\bk}|^2(t)dt$, it suffices to compute
\begin{align*}
    \mathbb E |\widetilde{U}_{\bk}|^2(t)& =  \sigma_0^2|\bk|^{-2\gamma} \mathbb E \Big( \int_0^t e^{-M(t-s)} c_{\bk} dW_{\bk} (s)\Big)^2 
 = \sigma_0^2|\bk|^{-2\gamma} \mathbb E  \Big( \int_0^t c_{\bk}^{T} e^{-(M+M^T)(t-s)} c_{\bk}  ds\Big)
 \\
 &= \sigma_0^2|\bk|^{-2\gamma} \mathbb E  \Big( \int_0^t e^{-2(\nu_h |\bk'|^2 + \nu_z |k_3|^2)(t-s)}  ds\Big) = \sigma_0^2\frac{|\bk|^{-2\gamma}}{2(\nu_h |\bk'|^2 + \nu_z |k_3|^2)} \left(1- e^{-2(\nu_h |\bk'|^2 + \nu_z |k_3|^2)t}\right),
\end{align*}
where we have used the fact that $M$ and $M^T$ commute, implying $e^{-tM}e^{-tM^T} = e^{-t(M+M^T)}$. To establish \eqref{order-Var}, we compute 
\begin{equation}\label{eqn:UbarVar}
    \mathbb E\bigg[\Big(\int_0^T  |\overline{U}_{\bk}|^2(t) dt \Big)^2\bigg] = 
\int_0^T \int_0^T \mathbb E \left [|\overline{U}_{\bk}|^2(t)|\overline{U}_{\bk}|^2(s)\right]dt ds = 2\int_0^T \int_0^t \mathbb E \left [|\overline{U}_{\bk}|^2(t)|\overline{U}_{\bk}|^2(s)\right]ds dt,
\end{equation}
and for $s < t$, we derive
\begin{align*}
     \mathbb E \left [|\overline{U}_{\bk}|^2(t)|\overline{U}_{\bk}|^2(s)\right] & =\sigma_0^4|{\bk}|^{-4\gamma} e^{-2\nu_h |\bk|^2 (t+s)}
     \mathbb E \left[ \Big( \int_0^s e^{\nu_h |\bk|^2 u} dW_{\bk}(u)\Big)^4  \right] \\
     & \quad + \sigma_0^4|{\bk}|^{-4\gamma} e^{-2\nu_h |\bk|^2 (t+s)}\mathbb E \left[ \Big( \int_0^s e^{\nu_h |\bk|^2 u} dW_{\bk}(u)\Big)^2\Big( \int_s^t e^{\nu_h |\bk|^2 u} dW_{\bk}(u)\Big)^2 \right] \\
     & = \sigma_0^4|{\bk}|^{-4\gamma} e^{-2\nu_h |\bk|^2 (t+s)}\left[3\Big(\frac{e^{2\nu_h |\bk|^2 s}-1}{2\nu_h |\bk|^2 }\Big)^2 + \frac{e^{2\nu_h |\bk|^2 s}-1}{2\nu_h |\bk|^2 }  \frac{e^{2\nu_h |\bk|^2 t}-e^{2\nu_h |\bk|^2 s}}{2\nu_h |\bk|^2 } \right] \\
     & = \sigma_0^4 |{\bk}|^{-4\gamma} \Big(\frac{1}{2\nu_h |{\bk}|^2}\Big)^2 \left[1 - e^{-2\nu_h |\bk|^2 s} +2e^{-2\nu_h |\bk|^2 (t-s)} -5 e^{-2\nu_h |\bk|^2 t} + 3e^{-2\nu_h |\bk|^2 (t+s)}  \right].
\end{align*}
Plugging this back into equation~\eqref{eqn:UbarVar}, the integral of the first term in the bracket cancels out with $\mathbb E^2[\int_0^T  |\overline{U}_{\bk}|^2(t) dt]$, while the remaining terms are of order $\frac{1}{2\nu_h |\bk|^2}$. Combining these with the coefficients in front of the bracket, we conclude that the variance is of order $|\bk|^{-4\gamma - 6}$. The asymptotics for the variance involving $\widetilde{U}_{\bk}$ follows similarly. 
\end{proof}

\begin{remark}\label{rmk:nolimit}
    Multiplying  the right sides of \eqref{order-Ukbar} and \eqref{order-Uktilde} by $|\bk|^{2\gamma+2}$, we see that the right hand side of \eqref{order-Ukbar} becomes a constant $\frac{\sigma_0^2T}{2\nu_h}$ and thus converges, but the right hand side of \eqref{order-Uktilde} has no limit as $|\bk|\to \infty$. This is precisely the key reason why the estimator for $\nu_z$ derived from \eqref{PE-baroclinic} lacks the asymptotic normality property. To address this limitation, we introduce a new projection $\widehat V = \sum_{|\bk'|=\sqrt{q}|k_3|\neq 0} V_{\bk} \phi_{\bk}$, and in Section~\ref{sec:derivation}, we propose an estimator for $\nu_z$ based on $\widehat V$, which will be proved to be asymptotically normal; see Theorem~\ref{thm:normality}. This suggests that a careful selection of Fourier modes yields more accurate convergence results.
\end{remark}

Denote by $\widehat{U}_{\bk} = U_{\bk}$, when $|\bk'|=\sqrt q|k_3|$. Following similar computations as for $\widetilde U_{\bk}$, one can show that 
\begin{equation}
    \mathbb E \int_0^T |\widehat{U}_{\bk}|^2(t) dt \asymp \frac{\sigma_0^2 T|\bk|^{-2\gamma}}{2(\nu_h + \frac{\nu_z}{q} ) |\bk'|^2 } = \frac{\sigma_0^2 T|\bk|^{-2\gamma-2}}{\frac{2q}{q+1}\nu_h + \frac{2}{q+1}\nu_z} , \quad k_3\neq 0. \label{order-Ukhat}
\end{equation} 
Direct calculations yield
\begin{equation}\label{order-Vhat}
   \Var\left[\int_0^T  |\widehat{U}_{\bk}|^2(t) dt \right] \sim |\bk|^{-(4\gamma +6)}. 
\end{equation}
It is important to note that the right-hand side of \eqref{order-Ukhat} admits a limit $\frac{\sigma_0^2 T}{\frac{2q}{q+1}\nu_h + \frac{2}{q+1}\nu_z}$ after being multiplied by $|\bk|^{2\gamma+2}$. This is the primary motivation for our specific choice of $|\bk'|=\sqrt q|k_3|$ in defining the projection $\widehat{V}$. 

Note that since $|U_{\bk}|^2 = |\overline{U}_{\bk}|^2 + |\widetilde{U}_{\bk}|^2$, we have
\begin{equation}\label{order-Uk}
    \mathbb E \int_0^T |U_{\bk}|^2(t) dt \sim |\bk|^{-2\gamma -2}.
\end{equation}
Next, we define $U^N, \overline U^N, \widetilde U^N, \widehat U^N$ as follows 
\begin{equation}\label{eq:Un}
U^N = \sum\limits_{1\leq|\bk|\leq N} U_{\bk} \phi_{\bk},\quad \overline U^N = \sum\limits_{\substack{1\leq|\bk|\leq N\\ k_3=0}} U_{\bk} \phi_{\bk},\quad \widetilde U^N = \sum\limits_{\substack{1\leq|\bk|\leq N\\ k_3\neq 0}} U_{\bk} \phi_{\bk}, \quad \widehat U^N = \sum\limits_{\substack{1\leq|\bk|\leq N\\ |\bk'|=\sqrt q|k_3|}} U_{\bk} \phi_{\bk}.
\end{equation}

Thanks to \eqref{order-Ukbar}--\eqref{order-Ukhat} and Lemma \ref{lemma:order}, we have the following asymptotics regarding the linear parts.

\begin{lemma}\label{lemma:order-linear}
For $\beta>\frac\gamma2$, 
\begin{align}
     &\mathbb E \int_0^T \|A^\beta \overline{U}^N\|^2 dt \asymp \sigma_0^2\frac{T}{2\nu_h} \frac{\pi}{2\beta-\gamma} N^{4\beta-2\gamma}, \label{order-Ubar}
     \\
     & \mathbb E \int_0^T \|A^\beta \widetilde{U}^N\|^2 dt \sim  N^{4\beta-2\gamma+1}, \label{order-Utilde}
     \\
     &\mathbb E \int_0^T \|A^\beta U^N\|^2 dt \sim N^{4\beta-2\gamma+1}, \label{order-U}
     \\
     &\mathbb E \int_0^T \|A^\beta \widehat{U}^N\|^2 dt \asymp \sigma_0^2\frac{T}{\nu_h + \frac1q\nu_z}\frac{\pi}{2\beta-\gamma} N^{4\beta-2\gamma}, \label{order-Uhat}
\end{align}
and for $\beta_1+\beta_2+\beta_3 > \frac\gamma2$, one has
\begin{align}
 &\mathbb E \int_0^T \|A_h^{\beta_1} A_z^{\beta_2} A^{\beta_3} U^N\|^2 dt \sim \mathbb E \int_0^T \|A^{\beta_1+\beta_2+\beta_3} U^N\|^2 dt \sim N^{4(\beta_1+\beta_2+\beta_3)-2\gamma+1}, \label{eqn:same-order-1}
 \\
 &\mathbb E \int_0^T \|A_h^{\beta_1} A_z^{\beta_2} A^{\beta_3} \widetilde U^N\|^2 dt \sim \mathbb E \int_0^T \|A_h^{\beta_1} A_z^{\beta_2} A^{\beta_3} U^N\|^2 dt \sim N^{4(\beta_1+\beta_2+\beta_3)-2\gamma+1}, \label{eqn:same-order-2}
 \\
&\mathbb E \int_0^T \|A_h^{\beta_1} A_z^{\beta_2} A^{\beta_3} \widehat{U}^N\|^2 dt \sim \mathbb E \int_0^T \|A^{\beta_1+\beta_2+\beta_3} \widehat U^N\|^2 dt \sim N^{4(\beta_1+\beta_2+\beta_3)-2\gamma}. \label{eqn:same-order-3}
\end{align}
\end{lemma}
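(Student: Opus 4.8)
The plan is to prove Lemma~\ref{lemma:order-linear} by reducing all statements to a single summation lemma applied to the per-mode asymptotics already recorded in \eqref{order-Ukbar}--\eqref{order-Ukhat}. First I would treat \eqref{order-Ubar}: since $\|A^\beta\overline U^N\|^2=\sum_{1\le|\bk|\le N,\,k_3=0}|\bk|^{4\beta}|\overline U_{\bk}|^2$, taking expectation and integrating in $t$, the term \eqref{order-Ukbar} gives a summand $\asymp \frac{\sigma_0^2T}{2\nu_h}|\bk'|^{4\beta-2\gamma-2}$ over $\bk'\in\mathbb Z^2$ with $1\le|\bk'|\le N$. This is a two-dimensional lattice sum of a radial power, so by comparison with the integral $\int_{1\le|\xi|\le N}|\xi|^{4\beta-2\gamma-2}\,d\xi = 2\pi\int_1^N r^{4\beta-2\gamma-1}\,dr \asymp \frac{2\pi}{4\beta-2\gamma}N^{4\beta-2\gamma}=\frac{\pi}{2\beta-\gamma}N^{4\beta-2\gamma}$ (valid since $\beta>\gamma/2$ makes the exponent positive), we get the stated constant; here I expect to invoke Lemma~\ref{lemma:order} to make the lattice-sum-to-integral comparison and the handling of the error terms rigorous. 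The case \eqref{order-Uhat} is identical in structure: the modes with $|\bk'|=\sqrt q|k_3|$ are again parametrized (up to the constraint being satisfiable only for $q$ rational, which is why $q\in\mathbb Q$) by essentially a one-parameter family indexed by $k_3$ with $|\bk|^2=(q+1)|k_3|^2$, but written against $|\bk'|$ it is again effectively a $2$-dimensional radial sum because $\bk'$ ranges over a full-dimensional cone in $\mathbb Z^2$; using \eqref{order-Ukhat} the summand is $\asymp \frac{\sigma_0^2 T}{(\nu_h+\nu_z/q)}|\bk'|^{4\beta-2\gamma-2}$, giving the same $N^{4\beta-2\gamma}$ order with the modified constant $\frac{\pi}{2\beta-\gamma}\cdot\frac{\sigma_0^2T}{\nu_h+\nu_z/q}$.

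Next, for \eqref{order-Utilde} and \eqref{order-U}: here \eqref{order-Uktilde} only gives a one-sided asymptotic ($\sim$, not $\asymp$) because $\frac{|\bk|^{-2\gamma}}{2(\nu_h|\bk'|^2+\nu_z|k_3|^2)}$ oscillates between $\frac{|\bk|^{-2\gamma-2}}{2\max(\nu_h,\nu_z)}$ and $\frac{|\bk|^{-2\gamma-2}}{2\min(\nu_h,\nu_z)}$ depending on the direction of $\bk$. Nonetheless $\|A^\beta\widetilde U^N\|^2 = \sum_{k_3\ne0}|\bk|^{4\beta}|\widetilde U_{\bk}|^2$ has expected integral bounded above and below by constant multiples of $\sum_{1\le|\bk|\le N}|\bk|^{4\beta-2\gamma-2}$, which is now a genuine $3$-dimensional lattice sum $\asymp \frac{4\pi}{4\beta-2\gamma+1}N^{4\beta-2\gamma+1}$ by the same integral comparison (Lemma~\ref{lemma:order}), and since the exponent $4\beta-2\gamma-2$ with $\beta>\gamma/2$ need not be $>-3$ automatically but here $4\beta-2\gamma>0>-3$ so convergence of the comparison is fine; excluding the $k_3=0$ slice changes the sum only by a lower-order ($N^{4\beta-2\gamma}$) amount. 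Thus \eqref{order-Utilde} holds with the $\sim$ (not $\asymp$) notation, and \eqref{order-U} follows since $|U_{\bk}|^2=|\overline U_{\bk}|^2+|\widetilde U_{\bk}|^2$ and the $\overline U$ part contributes only order $N^{4\beta-2\gamma}$, which is absorbed.

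Finally, for the mixed-operator estimates \eqref{eqn:same-order-1}--\eqref{eqn:same-order-3}: the point is that $\|A_h^{\beta_1}A_z^{\beta_2}A^{\beta_3}f\|^2 = \sum_{\bk}|\bk'|^{4\beta_1}|k_3|^{4\beta_2}|\bk|^{4\beta_3}|f_{\bk}|^2$, and since $|\bk'|\le|\bk|$ and $|k_3|\le|\bk|$ we have the pointwise bound $|\bk'|^{4\beta_1}|k_3|^{4\beta_2}|\bk|^{4\beta_3}\le |\bk|^{4(\beta_1+\beta_2+\beta_3)}$; for the matching lower bound one restricts the sum to a fixed-angle cone where $|\bk'|\sim|k_3|\sim|\bk|$ (e.g.\ $|\bk'|\le 2|k_3|$ and $|k_3|\le 2|\bk'|$), on which the three factors are all comparable to $|\bk|$, and that cone carries a positive proportion of the lattice sum. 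Combined with the per-mode orders from \eqref{order-Uk} (for $U^N$), \eqref{order-Uktilde} (for $\widetilde U^N$), and \eqref{order-Ukhat} (for $\widehat U^N$), and the same integral-comparison bookkeeping, this yields the claimed orders $N^{4(\beta_1+\beta_2+\beta_3)-2\gamma+1}$ in the $3$-dimensional cases and $N^{4(\beta_1+\beta_2+\beta_3)-2\gamma}$ in the $\widehat U$ case (one dimension lower); the condition $\beta_1+\beta_2+\beta_3>\gamma/2$ is exactly what makes the relevant power sums diverge at the stated rate. The main obstacle I anticipate is the careful bookkeeping in the cone/direction argument for the mixed operators — making precise that restricting to a cone loses only a constant factor and does not change the power of $N$ — together with correctly separating the constants in \eqref{order-Ubar} and \eqref{order-Uhat} from the merely-$\sim$ statements; all of this is routine given Lemma~\ref{lemma:order}, but must be done consistently across the eight displayed estimates.
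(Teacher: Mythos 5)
Your overall strategy is the paper's: reduce each expectation to the per-mode asymptotics of Lemma~\ref{lem:U} and then to lattice power sums evaluated via Lemma~\ref{lemma:order}. For \eqref{order-Ubar}, \eqref{order-Utilde} and \eqref{order-U} your computation is essentially verbatim the paper's (two-sided comparison of $\nu_h|\bk'|^2+\nu_z|k_3|^2$ with $|\bk|^2$, a $2$D resp.\ $3$D radial sum, subtraction of the lower-order $k_3=0$ slice). For \eqref{eqn:same-order-1}--\eqref{eqn:same-order-2} you genuinely deviate: the paper invokes the symmetry identity \eqref{eqn:same-order} together with $c_\alpha(|k_1|^\alpha+|k_2|^\alpha+|k_3|^\alpha)\le|\bk|^\alpha\le C_\alpha(|k_1|^\alpha+|k_2|^\alpha+|k_3|^\alpha)$, while you use the pointwise upper bound $|\bk'|^{4\beta_1}|k_3|^{4\beta_2}|\bk|^{4\beta_3}\le|\bk|^{4(\beta_1+\beta_2+\beta_3)}$ plus a fixed-aperture cone for the lower bound. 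Both routes give the two-sided bounds that the paper's $\sim$ notation requires, and your cone argument handles the product of powers more directly than the symmetry identity does.

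The step that does not hold up as written is \eqref{order-Uhat}. The claim that ``$\bk'$ ranges over a full-dimensional cone in $\mathbb Z^2$'' is false: a horizontal mode $\bk'$ appears in $\widehat U^N$ only if there exists an integer $k_3$ with $|\bk'|^2=qk_3^2$, i.e.\ only if $|\bk'|^2/q$ is a perfect square. For $q=1$ this excludes, e.g., $\bk'=(1,1)$, and the admissible set up to radius $R$ has cardinality $2\sum_{m\le R}r_2(m^2)=O(R^{1+\varepsilon})$ — a density-zero subset of the lattice points in the disk, not $\Theta(R^2)$ of them. Consequently the replacement of $\sum_{1\le|\bk|\le N,\,|\bk'|=\sqrt q|k_3|}|\bk|^{4\beta-2\gamma-2}$ by the full two-dimensional sum $\sum_{\bk'\in\mathbb Z^2,\,1\le|\bk'|\le\sqrt{q/(q+1)}N}|\bk'|^{4\beta-2\gamma-2}$ is not an identity, and the order $N^{4\beta-2\gamma}$ does not follow from the argument you give: an honest count of the modes on the cone $\{|\bk'|^2=qk_3^2,\ |\bk|\le N\}$ yields a strictly smaller power of $N$ (up to logarithms). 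To be fair, the paper's own proof performs exactly the same substitution, so you are not introducing a new gap relative to the paper; but as a self-contained justification the step fails, and it propagates to \eqref{eqn:same-order-3}, which you (like the paper) reduce to \eqref{order-Uhat}. Everything else in your proposal is correct and consistent with the paper.
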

\begin{proof}
For $N\gg 1$, we direct evaluations, we obtain
\begin{align*}
    \mathbb E \int_0^T \|A^\beta \overline{U}^N\|^2 dt& = \mathbb E \int_0^T \left|\sum\limits_{ k_3=0, 1\leq |\bk|\leq N} |\bk|^{2\beta} U_{\bk} \phi_{\bk}\right|^2 dt = \sum\limits_{  k_3=0, 1\leq |\bk|\leq N} |\bk|^{4\beta} \mathbb E \int_0^T |\overline U_{\bk}|^2 dt \nonumber
    \\
    &\asymp \sigma_0^2\frac{T}{2\nu_h}\sum\limits_{  k_3=0,  1\leq |\bk|\leq N} |\bk|^{4\beta-2\gamma-2} = \sigma_0^2\frac{T}{2\nu_h} \sum\limits_{\bk'\in\mathbb Z^2, 1\leq |\bk'|\leq N} |\bk'|^{4\beta-2\gamma-2} \nonumber \\
    &\asymp \sigma_0^2\frac{T}{2\nu_h} \frac{\pi}{2\beta-\gamma} N^{4\beta-2\gamma},
\end{align*}
and
\begin{align*}
    \mathbb E \int_0^T \|A^\beta \widetilde{U}^N\|^2 dt& = \mathbb E \int_0^T \left|\sum\limits_{ k_3\neq 0, 1\leq |\bk|\leq N} |\bk|^{2\beta} U_{\bk} \phi_{\bk}\right|^2 dt = \sum\limits_{  k_3\neq 0, 1\leq |\bk|\leq N} |\bk|^{4\beta} \mathbb E \int_0^T |\widetilde U_{\bk}|^2 dt \nonumber
    \\
    &\sim \sum\limits_{ k_3\neq 0, 1\leq |\bk|\leq N} |\bk|^{4\beta-2\gamma-2} \sim \sum\limits_{\bk\in\mathbb Z^3, 1\leq |\bk|\leq N} |\bk|^{4\beta-2\gamma-2}  -  \sum\limits_{ k_3 = 0, 1\leq |\bk|\leq N} |\bk|^{4\beta-2\gamma-2}\nonumber
    \\
    &\sim  \sum\limits_{ \bk\in\mathbb Z^3, 1\leq |\bk|\leq N} |\bk|^{4\beta-2\gamma-2}  - \sum\limits_{\bk'\in\mathbb Z^2, 1\leq |\bk'|\leq N} |\bk'|^{4\beta-2\gamma-2}  \nonumber
    \\
    &\sim N^{4\beta-2\gamma+1} - N^{4\beta-2\gamma} \sim  N^{4\beta-2\gamma+1}.
\end{align*}
Likewise, we establish
\begin{align*}
     \mathbb E \int_0^T \|A^\beta \widehat{U}^N\|^2 dt& =  \mathbb E \int_0^T \left|\sum\limits_{ 1\leq |\bk|\leq N, |\bk'|=\sqrt{q}|k_3|} |\bk|^{2\beta} U_{\bk} \phi_{\bk}\right|^2 dt = \sum\limits_{ 1\leq |\bk|\leq N, |\bk'|=\sqrt{q}|k_3|} |\bk|^{4\beta} \mathbb E \int_0^T |\widehat U_{\bk}|^2 dt \nonumber
     \\
      &\asymp \sigma_0^2\frac{T}{\frac{2q}{q+1}\nu_h + \frac{2}{q+1}\nu_z}\sum\limits_{ 1\leq |\bk|\leq N, |\bk'|=\sqrt{q}|k_3|} |\bk|^{4\beta-2\gamma-2} 
      \\
      &= \sigma_0^2\frac{2T}{\frac{2q}{q+1}\nu_h + \frac{2}{q+1}\nu_z} \left(\frac{q+1}{q}\right)^{2\beta-\gamma-1} \sum\limits_{\bk'\in\mathbb Z^2, 1\leq |\bk'|\leq \sqrt{\frac{q}{q+1}}N}  |\bk'|^{4\beta-2\gamma-2} 
      \\
      &\asymp \sigma_0^2\frac{T}{\nu_h + \frac1q\nu_z} \frac{\pi}{2\beta-\gamma} N^{4\beta-2\gamma}.
\end{align*}
As $\|A^\beta U^N\|^2 = \|A^\beta \overline{U}^N\|^2 + \|A^\beta \widetilde{U}^N\|^2$, \eqref{order-U} follows immediately. 

Next notice that $$c_\alpha (|k_1|^\alpha + |k_2|^\alpha +|k_3|^\alpha) \leq |\bk|^{\alpha} \leq C_\alpha (|k_1|^\alpha + |k_2|^\alpha +|k_3|^\alpha),$$
and due to symmetry,
\begin{equation}\label{eqn:same-order}
    \sum\limits_{\bk\in\mathbb Z^d, 1\leq |\bk|\leq N} |k_1|^\alpha =  \sum\limits_{\bk\in\mathbb Z^d, 1\leq |\bk|\leq N} |k_2|^\alpha = \sum\limits_{\bk\in\mathbb Z^d, 1\leq |\bk|\leq N} |k_3|^\alpha \sim \sum\limits_{\bk\in\mathbb Z^d, 1\leq |\bk|\leq N} |\bk|^\alpha.
\end{equation}
From this we obtain \eqref{eqn:same-order-1}. 

For \eqref{eqn:same-order-2}, by direct calculation, we deduce 
\begin{align*}
    \mathbb E \int_0^T \|A_h^{\beta_1} A_z^{\beta_2} A^{\beta_3} \widetilde U^N\|^2 dt &= \mathbb E \int_0^T \|A_h^{\beta_1} A_z^{\beta_2} A^{\beta_3}  U^N\|^2 dt - \mathbb E \int_0^T \|A_h^{\beta_1} A_z^{\beta_2} A^{\beta_3}  \overline U^N\|^2 dt
    \\
    &=\mathbb E \int_0^T \|A_h^{\beta_1} A_z^{\beta_2} A^{\beta_3}  U^N\|^2 dt - \mathbb E \int_0^T \|A^{\beta_1+\beta_3} \overline U^N\|^2 dt
    \\
    &\sim N^{4(\beta_1+\beta_2+\beta_3)-2\gamma+1} - N^{4(\beta_1+\beta_3)-2\gamma} \sim N^{4(\beta_1+\beta_2+\beta_3)-2\gamma+1}.
\end{align*}

Finally, for \eqref{eqn:same-order-3}, notice that for $|\bk'|=\sqrt{q}|k_3|$  we have $|\bk'|^{4\beta_1} |k_3|^{4\beta_2} |\bk|^{4\beta_3} \sim |\bk|^{4(\beta_1+\beta_2+\beta_3)}$, and therefore,
\begin{align*}
    \mathbb E \int_0^T \|A_h^{\beta_1} A_z^{\beta_2} A^{\beta_3} \widehat U^N\|^2 dt \sim \mathbb E \int_0^T \|A^{\beta_1+\beta_2+\beta_3} \widehat U^N\|^2 dt \sim N^{4(\beta_1+\beta_2+\beta_3)-2\gamma}.
\end{align*}
This concludes the proof. 
\end{proof}

The next result provides the regularity of the solution to the linear system~\eqref{PE-original-linear}.

\begin{lemma}\label{lemma:regularity-linear}
Suppose that $U=\overline{U}+\widetilde U$ is a solution to \eqref{PE-original-linear}. Assume that $U(0) \in L^2(\Omega;\mathcal D(A^{\eta}))$ with $\eta\geq 0$, and $\gamma>\frac32$. Then for any $\gamma'<\frac\gamma2-\frac34$ and
$$\beta=
\begin{cases}
\eta, &\eta<\frac\gamma2-\frac34,
\\
\gamma', &\eta\geq \frac\gamma2-\frac34,
\end{cases}
$$
we have
\[
U,\; \widetilde U \in L^2(\Omega; L_{\text{loc}}^2((0,\infty);\mathcal D(A^{\beta+\frac12}))\cap C([0,\infty); \mathcal D(A^{\beta}) ) ).
\]
Also, for any $\gamma'<\frac\gamma2-\frac12$ and
$$\beta=
\begin{cases}
\eta, &\eta<\frac\gamma2-\frac12,
\\
\gamma', &\eta\geq \frac\gamma2-\frac12,
\end{cases}
$$
we have
\[
\overline{U}, \widehat U \in L^2(\Omega; L_{\text{loc}}^2((0,\infty);\mathcal D(A^{\beta+\frac12}))\cap C([0,\infty); \mathcal D(A^{\beta}) ) ).
\]
\end{lemma}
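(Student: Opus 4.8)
The plan is to decompose the solution as $U = V + Z$, where $V(t)$ is the deterministic evolution of the ($\mathcal F_0$-measurable) initial datum under the solution semigroup $S(t)$ of \eqref{PE-original-linear}, and $Z$ is the stochastic convolution, i.e.\ the solution of \eqref{PE-original-linear} with zero initial datum, so that the $\phi_\bk$-coefficients of $Z$ are precisely the Ornstein--Uhlenbeck processes \eqref{eqn:Ukbar}--\eqref{eqn:Uktilde} started from $0$. Since \eqref{PE-original-linear} diagonalizes in the basis $\{\phi_\bk\}$, with the $\phi_\bk$-coefficient of $S(t)v$ equal to $e^{-M_\bk t}v_\bk$, where $M_\bk = \lambda_\bk I$ on the barotropic modes and $M_\bk = \lambda_\bk I + f_0 J$ on the baroclinic ones, $\lambda_\bk := \nu_h|\bk'|^2+\nu_z|k_3|^2$ and $J = \left(\begin{smallmatrix}0 & -1\\ 1 & 0\end{smallmatrix}\right)$, everything reduces to summing mode-by-mode estimates. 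The parts $\overline U$, $\widetilde U$, $\widehat U$ go through the same scheme with the index set $\Z^3$ replaced by $\Z^2$ (for $\overline U$) or by the ``two-dimensional'' lattice $\{\,|\bk'|^2=q|k_3|^2\,\}$ (for $\widehat U$, where $\lambda_\bk = \frac{q\nu_h+\nu_z}{q+1}|\bk|^2$ since there $|\bk'|^2=\frac{q}{q+1}|\bk|^2$). I would then show that $V$ is always \emph{at least} as regular as the stated space, while $Z$ realizes exactly the claimed (critical) regularity.

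For $V$: since $e^{-M_\bk t}$ is $e^{-\lambda_\bk t}$ times an orthogonal matrix, $|(S(t)v)_\bk| = e^{-\lambda_\bk t}|v_\bk| \le |v_\bk|$, and $\lambda_\bk \ge \min(\nu_h,\nu_z)|\bk|^2$ (only $\nu_h>0$ being needed for $\overline V$). Dominated convergence on the Fourier series gives strong continuity of $t\mapsto A^\eta V(t)$ on $[0,\infty)$, and $\int_0^T e^{-2\lambda_\bk t}\,dt \le (2\lambda_\bk)^{-1}$ gives
\[
\Eb\int_0^T\|A^{\eta+\frac12}V\|^2\,dt = \sum_\bk |\bk|^{4\eta+2}\,\Eb|U_\bk(0)|^2\!\int_0^T e^{-2\lambda_\bk t}\,dt \lesssim \Eb\|A^\eta U(0)\|^2 < \infty .
\]
Hence $V\in L^2(\Omega;L^2_{\mathrm{loc}}((0,\infty);\mathcal D(A^{\eta+\frac12}))\cap C([0,\infty);\mathcal D(A^\eta)))$; when $\eta\ge\frac\gamma2-\frac34$ (resp.\ $\frac\gamma2-\frac12$) the embedding $\mathcal D(A^\eta)\hookrightarrow\mathcal D(A^{\gamma'})$ downgrades this to the stated space with $\beta=\gamma'$. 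The same computation, restricted to the relevant modes, covers $\overline V,\widetilde V,\widehat V$.

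For $Z$: the space-time $L^2$ estimate follows from Lemma~\ref{lem:U} with $U(0)=0$, since $\Eb\int_0^T\|A^{\beta+\frac12}\widetilde Z\|^2\,dt \sim \sum_{k_3\ne 0}|\bk|^{4\beta+2}\frac{\sigma_0^2 T|\bk|^{-2\gamma}}{2\lambda_\bk}\lesssim\sum_{\bk\in\Z^3}|\bk|^{4\beta-2\gamma}<\infty$ exactly when $\beta<\frac\gamma2-\frac34$, while $\Eb\int_0^T\|A^{\beta+\frac12}\overline Z\|^2\,dt\lesssim\sum_{\bk'\in\Z^2}|\bk'|^{4\beta-2\gamma}$ and $\Eb\int_0^T\|A^{\beta+\frac12}\widehat Z\|^2\,dt\lesssim\sum_{|\bk'|^2=q|k_3|^2}|\bk|^{4\beta-2\gamma}$ are finite exactly when $\beta<\frac\gamma2-\frac12$ --- the last sum being of order $N^{4\beta-2\gamma+2}$ because $\{\,|\bk'|^2=q|k_3|^2,\ |\bk|\le N\,\}$ has cardinality $\sim N^2$, exactly as recorded before \eqref{order-Ukhat}; adding the $\overline Z$ and $\widetilde Z$ estimates gives the bound for $Z$ with $\beta<\frac\gamma2-\frac34$. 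To upgrade to pathwise continuity in $\mathcal D(A^\beta)$, I would use that $Z$ is Gaussian, so $\Eb\|A^\beta(Z(t)-Z(s))\|^{2m}\le C_m\big(\Eb\|A^\beta(Z(t)-Z(s))\|^2\big)^m$, together with the mode-wise increment estimate $\Eb|Z_\bk(t)-Z_\bk(s)|^2\lesssim\sigma_0^2|\bk|^{-2\gamma}\lambda_\bk^{\theta-1}|t-s|^\theta$, valid for every $\theta\in[0,1]$ (from It\^o's isometry and $\min(x,1)\le x^\theta$; the orthogonal factor in $e^{-M_\bk t}$ contributes only a lower-order term). This yields $\Eb\|A^\beta(Z(t)-Z(s))\|^2\lesssim|t-s|^\theta\sum_\bk|\bk|^{4\beta-2\gamma+2\theta-2}$, and in the stated regimes the sum converges for some $\theta$ close to $1$, so $\Eb\|A^\beta(Z(t)-Z(s))\|^{2m}\lesssim|t-s|^{m\theta}$ with $m\theta>1$ for $m$ large; the Kolmogorov--Chentsov criterion (Hilbert-space valued, with moments) then produces a modification in $C([0,\infty);\mathcal D(A^\beta))$ with $\Eb\sup_{[0,T]}\|A^\beta Z\|^2<\infty$, and subtracting $\overline Z$ handles $\widetilde Z$. (Alternatively, the factorization method applies under the same summability conditions.) Adding $V$ back completes the argument.

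I expect the last step to be the main obstacle: extracting just enough, and uniform in $\bk$, H\"older-in-time regularity from the high Fourier modes to feed Kolmogorov--Chentsov \emph{at the critical spatial exponent} while simultaneously controlling $\Eb\sup_{[0,T]}$. The only other point requiring care is the bookkeeping for the index set $\{\,|\bk'|^2=q|k_3|^2\,\}$, whose $\sim N^2$ counting asymptotics are exactly what make $\widehat Z$ behave like the barotropic part $\overline Z$ (giving $\beta<\frac\gamma2-\frac12$) rather than like the baroclinic part $\widetilde Z$ (only $\beta<\frac\gamma2-\frac34$).
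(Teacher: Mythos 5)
Your proposal is correct. The paper itself does not write out a proof: it simply cites standard references (da~Prato--Zabczyk, Rozovskii) with the remark that the rotation term drops out of the energy estimate, so the bound reduces to that of a heat equation with additive noise. What you do is make that standard argument self-contained: diagonalize in the $\phi_{\bk}$ basis, split $U$ into the semigroup evolution of $U(0)$ (which is a contraction on each $\mathcal D(A^{\eta})$, hence never the bottleneck) and the stochastic convolution, and read off the critical exponent from the It\^o-isometry computation of Lemma~\ref{lem:U} together with the dimension of the index set ($\mathbb Z^3$ for $U,\widetilde U$ versus the two-dimensional sets for $\overline U,\widehat U$, which is exactly where the $\frac34$ versus $\frac12$ discrepancy comes from). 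Your mode-wise increment bound $\Eb|Z_{\bk}(t)-Z_{\bk}(s)|^2\lesssim |\bk|^{-2\gamma}\lambda_{\bk}^{\theta-1}|t-s|^{\theta}$, Gaussian hypercontractivity, and Kolmogorov--Chentsov correctly deliver the $C([0,\infty);\mathcal D(A^{\beta}))$ statement with the $L^2(\Omega)$ bound on the sup; the step you flag as the main obstacle is in fact unproblematic, because the lemma only asserts the regularity for $\beta$ \emph{strictly} below the critical exponent (e.g.\ $\beta=\gamma'<\frac{\gamma}{2}-\frac34$), so the sum $\sum_{\bk}|\bk|^{4\beta-2\gamma+2\theta-2}$ always converges for some $\theta$ with $m\theta>1$ and there is genuine slack. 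The only cosmetic issue is your reuse of the letter $V$ for the semigroup part, which collides with the paper's notation for the nonlinear solution. Compared with the paper's citation-based treatment, your route buys an explicit, elementary verification of exactly where the exponents $\frac{\gamma}{2}-\frac34$ and $\frac{\gamma}{2}-\frac12$ come from, at the cost of redoing standard stochastic-convolution theory by hand.
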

The proof follows from, e.g., \cite{da2014stochastic,rozovsky2018stochastic}. Specifically, the rotation part $U^\perp$ disappears in the energy estimate, and the estimate is essentially the same as that of the heat equation with additive noise. 

\begin{remark}\label{rmk:linear-improve}
    The difference in the regularities of $U, \widetilde U$ and $\overline{U}, \widehat U$ arises from the dimensionality difference:  $U, \widetilde U$ are three-dimensional, while $\overline{U}, \widehat U$ are essentially two-dimensional. If one considers the horizontal average of $U$, it results in a one-dimensional function, further improving  the regularity by $\frac14$.
\end{remark}

\section{Derivation of the Estimators and the Main Results}\label{sec:derivation}

In this section, we outline the heuristic derivations of the estimators based on the Girsanov theorem. The procedure and reasoning follow to \cite{cialenco2011parameter}.
We then state the main results of this paper. The estimators are based on the first $N$ Fourier modes of the solution $V$ to the original system \eqref{PE-system}. To this end, similar to \eqref{eq:Un},  we define $V_{\bk} = \langle V, \phi_{\bk}\rangle$ and the various projections $V^N, \overline V^N, \widetilde V^N, \widehat V^N$ of  $V$ as follows:
\[
V^N = \sum\limits_{1\leq|\bk|\leq N} V_{\bk} \phi_{\bk} ,\quad \overline V^N  = \sum\limits_{\substack{1\leq|\bk|\leq N\\ k_3=0}} V_{\bk} \phi_{\bk},\quad \widetilde V^N  = \sum\limits_{\substack{1\leq|\bk|\leq N\\ k_3\neq 0}} V_{\bk} \phi_{\bk}, \quad \widehat V^N = \sum\limits_{\substack{1\leq|\bk|\leq N\\ |\bk'|=\sqrt{q}|k_3|}} V_{\bk} \phi_{\bk}.
\]
Denote by $B_N(V,V) = P_N B(V,V)$, then $\overline{V}^N$, $\widetilde{V}^N$, and $\widehat{V}^N$ satisfy
\begin{subequations}\label{PE-original-finitemode}
\begin{align}
    &d \overline{V}^N + \left(\mathcal P_h \overline{B_N(V,V)} + \nu_h A_h \overline{V}^N\right)dt
    = \sigma_0\sum\limits_{k_3=0, 1\leq|\bk|\leq N} |{\bk}|^{-\gamma} c_{\bk}\phi_{\bk} dW_{\bk},
    \\
    &d \widetilde V^N + \left(\widetilde{B_N(V,V)} + \nu_h A_h \widetilde V^N + \nu_z A_z \widetilde V^N + f_0 \widetilde{(V^N)}^\perp\right)dt
     = \sigma_0\sum\limits_{k_3\neq 0, 1\leq|\bk|\leq N} |{\bk}|^{-\gamma} c_{\bk}\phi_{\bk} dW_{\bk},
     \\
     &d \widehat V^N + \left(\widehat{B_N(V,V)} + \nu_h  A_h \widehat V^N + \nu_z A_z \widehat V^N + f_0 \widehat{(V^N)}^\perp\right)dt
     = \sigma_0\sum\limits_{ 1\leq|\bk|\leq N, |\bk'|=\sqrt{q}|k_3|} |{\bk}|^{-\gamma} c_{\bk}\phi_{\bk} dW_{\bk}.
\end{align}
\end{subequations}
Let  $\kappa=(\nu_h,\nu_z)$ and $\kappa_0 = (\nu_{h0},\nu_{z0})$, and  $\mathbb P_{\kappa}^{\overline{V},N,T}$ and $\mathbb P_{\kappa}^{\widetilde{V},N,T}$ be the probability measures in $C([0,T];\mathbb R^N)$ generated by $\overline{V}^N$ and $\widetilde{V}^N$, respectively, under the parametrization $\kappa=(\nu_h,\nu_z)$. Using \cite[Section~7.6.4]{liptser2001statistics}, we can informally write the Radon–Nikodym derivatives $ \frac{d\mathbb P_{\kappa}^{\overline{V},N,T}(\overline{V}^N)}{d\mathbb P_{\kappa_0}^{\overline{V},N,T}}$ and $\frac{d\mathbb P_{\kappa}^{\widetilde{V},N,T}(\widetilde{V}^N)}{d\mathbb P_{\kappa_0}^{\widetilde{V},N,T}}$  as follows:
\begin{align*}
    \frac{d\mathbb P_{\kappa}^{\overline{V},N,T}(\overline{V}^N)}{d\mathbb P_{\kappa_0}^{\overline{V},N,T}} = & \exp\Big[- \frac1{\sigma_0^2} \Big(\int_0^T  (\nu_h - \nu_{h0}) \langle A_h^{1+\gamma} \overline{V}^N, d\overline{V}^N \rangle
    \\
    & \quad - \frac12 \int_0^T (\nu_h^2 - \nu_{h0}^2) \langle A_h^{1+\frac{\gamma}{2}} \overline{V}^N,  A_h^{1+\frac{\gamma}{2}} \overline{V}^N \rangle dt - \int_0^T  (\nu_h - \nu_{h0}) \langle A_h^{1+\gamma} \overline{V}^N , \mathcal P_h\overline{B_N(V,V)} \rangle dt \Big)\Big],
\end{align*}
\begin{align*}
    \frac{d\mathbb P_{\kappa}^{\widetilde{V},N,T}(\widetilde{V}^N)}{d\mathbb P_{\kappa_0}^{\widetilde{V},N,T}} = & \exp\Big[- \frac1{\sigma_0^2} \Big(\int_0^T  (\nu_h - \nu_{h0}) \langle A_h A^{\gamma} \widetilde{V}^N, d\widetilde{V}^N \rangle - \int_0^T  (\nu_z - \nu_{z0}) \langle A_z A^{\gamma} \widetilde{V}^N, d\widetilde{V}^N \rangle
    \\
    & \quad - \frac12 \int_0^T (\nu_h^2 - \nu_{h0}^2) \langle A_h A^{\gamma} \widetilde{V}^N,  A_h \widetilde{V}^N \rangle dt - \frac12 \int_0^T (\nu_z^2 - \nu_{z0}^2) \langle A_z A^{\gamma} \widetilde{V}^N,  A_z \widetilde{V}^N \rangle dt
    \\
    & \quad -\int_0^T (\nu_h \nu_z - \nu_{h0}\nu_{z0}) \langle A_h A^{\gamma} \widetilde{V}^N,  A_z  \widetilde{V}^N \rangle dt
    \\
    & \quad - \int_0^T  (\nu_h - \nu_{h0}) \left\langle A_h A^{\gamma} \widetilde{V}^N , \widetilde{B_N(V,V)}  \right\rangle dt 
   - \int_0^T  (\nu_z - \nu_{z0}) \left\langle A_z A^{\gamma} \widetilde{V}^N , \widetilde{B_N(V,V)}  \right\rangle dt \Big)\Big] .
\end{align*}
By maximizing the likelihood ratio $d\mathbb P_{\kappa}^{\overline{V},N,T} /d\mathbb P_{\kappa_0}^{\overline{V},N,T}$ with respect to $\nu_h$, we may compute its informal maximum likelihood estimator (MLE): 
\begin{align*}
   \nu^N_{h1,MLE} = -\frac{\int_0^T  \langle A_h^{1+\gamma} \overline{V}^N, d\overline{V}^N \rangle + \int_0^T  \langle A_h^{1+\gamma} \overline{V}^N , \mathcal P_h\overline{B_N(V,V)} \rangle dt}{\int_0^T \|A_h^{1+\frac{\gamma}{2}} \overline{V}^N\|^2 dt}.
\end{align*}
Following its formalization, we propose the following class of estimators: 
\begin{align*}
   \nu^N_{h1} := -\frac{\int_0^T  \langle A_h^{1+\alpha} \overline{V}^N, d\overline{V}^N \rangle + \int_0^T  \langle A_h^{1+\alpha} \overline{V}^N , \mathcal P_h\overline{B_N(V,V)} \rangle dt}{\int_0^T \|A_h^{1+\frac{\alpha}{2}} \overline{V}^N\|^2 dt},
\end{align*}
where $\alpha$ is a free parameter whose range will be specified later. Note that $\nu^N_{h1,MLE}$ is a particular case of $\nu_{h1}^N$ when $\alpha=\gamma$. Next, by maximizing $ d\mathbb P_{\kappa}^{\widetilde{V},N,T}/d\mathbb P_{\kappa_0}^{\widetilde{V},N,T}$, and replacing $\nu_h$ by $\nu^N_{h1}$, we achieve the following candidate estimator for $\nu_z$:
\begin{align}
   \nu^N_{z1} := & -\frac1{\int_0^T \|A_z A^{\frac{\alpha}{2}} \widetilde{V}^N\|^2 dt}\Bigg(\int_0^T  \langle A_z A^{\alpha} \widetilde{V}^N, d\widetilde{V}^N \rangle  + \nu^N_{h1} \int_0^T \langle A_h A^{\alpha} \widetilde{V}^N,  A_z  \widetilde{V}^N \rangle dt \nonumber
   \\
   &\hspace{4cm}+ \int_0^T \left\langle A_z A^{\alpha} \widetilde{V}^N , \widetilde{B_N(V,V)}  \right\rangle dt\Bigg)\nonumber
   \\
   =& -\frac1{\int_0^T \|A_z A^{\frac{\alpha}{2}} \widetilde{V}^N\|^2 dt}\Bigg(\int_0^T  \langle A_z A^{\alpha} \widetilde{V}^N, d\widetilde{V}^N \rangle +     \int_0^T \left\langle A_z A^{\alpha} \widetilde{V}^N , \widetilde{B_N(V,V)} \right\rangle dt
\label{eq:nuz1}   \\
   &\hspace{2cm}-\frac{\int_0^T  \langle A_h^{1+\alpha} \overline{V}^N, d\overline{V}^N \rangle + \int_0^T  \langle A_h^{1+\alpha} \overline{V}^N , \mathcal P_h\overline{B_N(V,V)} \rangle dt}{\int_0^T \|A_h^{1+\frac{\alpha}{2}} \overline{V}^N\|^2 dt}\int_0^T \langle A_h A^{\alpha} \widetilde{V}^N,  A_z  \widetilde{V}^N \rangle dt\Bigg). \nonumber
\end{align}
While $\nu_{y1}^N,\nu_{z1}^N$ are valid estimators, we remark that both depend on $B_N(V, V)$, hence require knowledge of the entire path of the solution on time interval $[0,T]$, not only of the first $N$ Fourier modes as assumed by our sampling scheme. To overcome this drawback, we introduce two additional classes of estimators, where we replace $B_N(V, V)$ by its Galerkin type projection $B_N(V^N,V^N)$. This leads to 
\begin{align*}
   \nu^N_{h2} := -\frac{\int_0^T  \langle A_h^{1+\alpha} \overline{V}^N, d\overline{V}^N \rangle + \int_0^T  \langle A_h^{1+\alpha} \overline{V}^N , \mathcal P_h\overline{B_N(V^N,V^N)} \rangle dt}{\int_0^T \|A_h^{1+\frac{\alpha}{2}} \overline{V}^N\|^2 dt},
\end{align*}
and
\begin{align*}
   \nu^N_{z2} :=& -\frac1{\int_0^T \|A_z A^{\frac{\alpha}{2}} \widetilde{V}^N\|^2 dt}\Bigg(\int_0^T  \langle A_z A^{\alpha} \widetilde{V}^N, d\widetilde{V}^N \rangle +     \int_0^T \left\langle A_z A^{\alpha} \widetilde{V}^N , \widetilde{B_N(V^N,V^N)} \right\rangle dt
   \\
   &\hspace{2cm}-\frac{\int_0^T  \langle A_h^{1+\alpha} \overline{V}^N, d\overline{V}^N \rangle + \int_0^T  \langle A_h^{1+\alpha} \overline{V}^N , \mathcal P_h\overline{B_N(V^N,V^N)} \rangle dt}{\int_0^T \|A_h^{1+\frac{\alpha}{2}} \overline{V}^N\|^2 dt}\int_0^T \langle A_h A^{\alpha} \widetilde{V}^N,  A_z  \widetilde{V}^N \rangle dt\Bigg).
\end{align*}
As one may expect, these estimators are `not far' from  $\nu_{y1}^N,\nu_{z1}^N$, and as we show below, they indeed remain consistent; see Theorem~\ref{thm:consistency}. Moreover, in the above estimators, one can drop all together the terms that involve the nonlinear component $B$ and consider the following estimators
\begin{align*}
   \nu^N_{h3} := -\frac{\int_0^T  \langle A_h^{1+\alpha} \overline{V}^N, d\overline{V}^N \rangle dt }{\int_0^T \|A_h^{1+\frac{\alpha}{2}} \overline{V}^N\|^2 dt},
\end{align*}
and
\begin{align*}
   \nu^N_{z3} := -\frac1{\int_0^T \|A_z A^{\frac{\alpha}{2}} \widetilde{V}^N\|^2 dt}\Bigg(\int_0^T  \langle A_z A^{\alpha} \widetilde{V}^N, d\widetilde{V}^N \rangle  dt
   -\frac{\int_0^T  \langle A_h^{1+\alpha} \overline{V}^N, d\overline{V}^N \rangle dt}{\int_0^T \|A_h^{1+\frac{\alpha}{2}} \overline{V}^N\|^2 dt}\int_0^T \langle A_h A^{\alpha} \widetilde{V}^N,  A_z  \widetilde{V}^N \rangle dt\Bigg).
\end{align*}
These estimators have clear computational advantages. Later on, we will show rigorously in Lemma \ref{lemma:nonlinear-term} that the neglected nonlinear terms are of lower orders and converge to zero as $N \to \infty$, which validates the choice of $\nu_{h3}^N$ and $\nu_{z3}^N$. 

As the next result shows, under appropriate conditions, all aforementioned estimators are (weakly) consistent.  Furthermore, we also prove that $\nu_{h1}^N$ is asymptotically normal with rate $N^2$. However, such property and the corresponding proofs do not extend to $\nu_{z1}^N$; see Remark~\ref{rmk:nolimit} for a detailed discussion. To address this, we propose a new class of estimators for $\nu_z$, by introducing a specially chosen projection. Namely, we replace all \texttt{tilde} projections $\sum_{{1\leq|\bk|\leq N, k_3\neq 0}}$ in \eqref{eq:nuz1} by \texttt{hat} projections $\sum_{{1\leq|\bk|\leq N, |\bk'| = \sqrt{q}|k_3|}}$, and consider 
\begin{align*}
   \widehat{\nu^N_{z1}}
   :=& -\frac1{\int_0^T \|A_z A^{\frac{\alpha}{2}} \widehat{V}^N\|^2 dt}\Bigg(\int_0^T  \langle A_z A^{\alpha} \widehat{V}^N, d\widehat{V}^N \rangle +     \int_0^T \left\langle A_z A^{\alpha} \widehat{V}^N , \widehat{B_N(V,V)} \right\rangle dt
   \\
   &\hspace{2cm}-\frac{\int_0^T  \langle A_h^{1+\alpha} \overline{V}^N, d\overline{V}^N \rangle + \int_0^T  \langle A_h^{1+\alpha} \overline{V}^N , \mathcal P_h\overline{B_N(V,V)} \rangle dt}{\int_0^T \|A_h^{1+\frac{\alpha}{2}} \overline{V}^N\|^2 dt}\int_0^T \langle A_h A^{\alpha} \widehat{V}^N,  A_z  \widehat{V}^N \rangle dt\Bigg),
\end{align*}
Analogously, one defines $\widehat{\nu_{z2}^N}$ and $\widehat{\nu_{z3}^N}$  using $\nu_{z2}^N$ and $\nu_{z3}^N$, respectively.

Now, we are ready to present the main results of our work.

\begin{theorem}[Consistency]\label{thm:consistency}
Assume that $\gamma>4$ and suppose that $V$ is the solution to the system \eqref{PE-system} with an initial condition $V(0) = V_0 \in \mathcal D(A^{\frac12+\gamma'})$ for all $\max\left\{\frac54, \frac\gamma2-1\right\}<\gamma'<\frac\gamma2-\frac34$. Then, for any $\alpha>\gamma-2$,

    (i)   $\nu^N_{h1}$, $\nu^N_{z1}$, and $\widehat{\nu^N_{z1}}$ are weakly consistent estimators of  $\nu_h$, $\nu_z$, and $\nu_z$, respectively, i.e.,
    \[
    \lim\limits_{N\to\infty} \nu^N_{h1} = \nu_h, \quad \lim\limits_{N\to\infty} \nu^N_{z1} = \lim\limits_{N\to\infty} \widehat{\nu^N_{z1}} = \nu_z,
    \]
    in probability; 

    (ii) if furthermore $\gamma>\frac92$, the estimators $\nu^N_{h2}$ and $\nu^N_{h3}$ are weakly consistent estimators of $\nu_h$, and $\nu^N_{z2}$ , $\widehat{\nu_{z2}^N}$, $\nu^N_{z3}$, $\widehat{\nu_{z3}^N}$ are weakly consistent estimators of  $\nu_z$.
\end{theorem}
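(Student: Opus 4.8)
The plan is the classical route for spectral estimators: substitute the SDE satisfied by each projected process into the stochastic-integral term of the numerator, so that the drift terms — in particular the nonlinear ones — cancel and the estimation error appears as a ratio of a martingale to a quadratic functional. Inserting the barotropic equation of \eqref{PE-original-finitemode} into $\int_0^T\langle A_h^{1+\alpha}\overline V^N,d\overline V^N\rangle$, the $\mathcal P_h\overline{B_N(V,V)}$ contribution cancels the explicit nonlinear term of $\nu^N_{h1}$ and we get
\[
\nu^N_{h1}-\nu_h=-\frac{\mathcal M^N_h(T)}{\mathcal Q^N_h(T)},\qquad \mathcal Q^N_h(T):=\int_0^T\|A_h^{1+\frac\alpha2}\overline V^N\|^2\,dt,\quad \mathcal M^N_h(T):=\int_0^T\langle A_h^{1+\alpha}\overline V^N,\sigma\,dW\rangle,
\]
with $\mathcal M^N_h$ a martingale. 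The same manipulation in \eqref{eq:nuz1} — substituting the baroclinic equation, writing $\nu^N_{h1}=\nu_h+(\nu^N_{h1}-\nu_h)$, and using that the rotation term is Fourier-diagonal and pointwise orthogonal, hence drops — gives
\[
\nu^N_{z1}-\nu_z=-(\nu^N_{h1}-\nu_h)\,R^N-\frac{\mathcal M^N_z(T)}{\mathcal Q^N_z(T)},\qquad R^N:=\frac{\int_0^T\langle A_h A^{\alpha}\widetilde V^N,A_z\widetilde V^N\rangle\,dt}{\int_0^T\|A_z A^{\frac\alpha2}\widetilde V^N\|^2\,dt},
\]
with $\mathcal M^N_z(T)=\int_0^T\langle A_z A^\alpha\widetilde V^N,\sigma\,dW\rangle$ a martingale and $\mathcal Q^N_z(T)$ the denominator above; the identical identity with $\widetilde V^N$ replaced by $\widehat V^N$ holds for $\widehat{\nu^N_{z1}}$. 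Thus (i) reduces to a law of large numbers for the denominators, negligibility of the numerator martingales relative to them, and tightness of $R^N$ (and its hat analogue).

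Two ingredients do this. \emph{Splitting.} Write $V=U+Z$ with $U$ the solution of the linear system \eqref{PE-original-linear} and $Z:=V-U$ the nonlinear residual. For $U$ the Fourier modes are independent Ornstein–Uhlenbeck processes, so Lemma~\ref{lemma:order-linear} (here the hypothesis $\alpha>\gamma-2$ is exactly what makes $\beta=1+\frac\alpha2>\frac\gamma2$) gives $\mathbb E\,\mathcal Q^{N,U}_h(T)\asymp c\,N^{4+2\alpha-2\gamma}$, while \eqref{order-Var} yields $\Var\big(\mathcal Q^{N,U}_h(T)\big)\sim N^{4+4\alpha-4\gamma}=o\big((\mathbb E\,\mathcal Q^{N,U}_h)^2\big)$, hence $\mathcal Q^{N,U}_h(T)/\mathbb E\,\mathcal Q^{N,U}_h(T)\to1$ in probability; the analogue holds for the $\widetilde V^N,\widehat V^N$ denominators and for the numerator of $R^N$ via \eqref{order-Utilde}–\eqref{order-Uhat}, \eqref{order-Vhat}, \eqref{eqn:same-order-2}–\eqref{eqn:same-order-3}, and in particular numerator and denominator of $R^N$ carry the same leading order $N^{4+2\alpha-2\gamma+1}$ (resp.\ $N^{4+2\alpha-2\gamma}$ in the hat case), so $R^N$ is bounded in probability. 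For the residual, the regularity of $Z$ established in Section~\ref{sec:regularity} — under the stated ranges of $\gamma,\gamma'$ the barotropic and baroclinic residuals lie in $\mathcal D(A^{\rho})$ for some $\rho>\frac\gamma2$ — forces $\int_0^T\|A_h^{1+\frac\alpha2}\overline Z^N\|^2dt$, and the analogous residual quadratic functionals, to be of strictly lower order than $N^{4+2\alpha-2\gamma}$; a Cauchy–Schwarz bound on the cross term then upgrades the LLN to $\mathcal Q^N_h(T)/\mathbb E\,\mathcal Q^{N,U}_h(T)\to1$ in probability, and likewise for the other denominators. \emph{Martingale bound.} By It\^o's isometry, $\mathbb E\,\langle\mathcal M^N_h\rangle_T\lesssim\sum_{k_3=0,\,|\bk|\le N}|\bk|^{4(1+\alpha)-2\gamma}\,\mathbb E\!\int_0^T|\overline V_{\bk}|^2dt$, which by Lemma~\ref{lem:U}, the splitting and the regularity of $Z$ is $\sim N^{4+4\alpha-4\gamma}=o\big((\mathbb E\,\mathcal Q^{N,U}_h)^2\big)$; similarly $\mathbb E\,\langle\mathcal M^N_z\rangle_T=o\big((\mathbb E\,\mathcal Q^{N,U}_z)^2\big)$ and likewise in the hat case. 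A Chebyshev/maximal-inequality argument — $\mathbb P(|\mathcal M^N_h(T)|>\varepsilon b_N)\le \mathbb E\,\langle\mathcal M^N_h\rangle_T/(\varepsilon^2 b_N^2)$ with $b_N=\mathbb E\,\mathcal Q^{N,U}_h$, combined with the denominator LLN — gives $\mathcal M^N_h(T)/\mathcal Q^N_h(T)\to0$ in probability, and likewise for the other ratios. Slutsky's theorem then assembles $\nu^N_{h1}\to\nu_h$ and $\nu^N_{z1},\widehat{\nu^N_{z1}}\to\nu_z$ in probability, which is (i).

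For (ii), note that $\nu^N_{h2}-\nu^N_{h1}=-\bigl(\mathcal Q^N_h(T)\bigr)^{-1}\int_0^T\langle A_h^{1+\alpha}\overline V^N,\mathcal P_h(\overline{B_N(V^N,V^N)}-\overline{B_N(V,V)})\rangle\,dt$ and $\nu^N_{h3}-\nu^N_{h1}=\bigl(\mathcal Q^N_h(T)\bigr)^{-1}\int_0^T\langle A_h^{1+\alpha}\overline V^N,\mathcal P_h\overline{B_N(V,V)}\rangle\,dt$, with analogous identities for the $z$-estimators. By Lemma~\ref{lemma:nonlinear-term} these nonlinear numerators are of strictly lower order than $\mathbb E\,\mathcal Q^{N,U}_h(T)$ — this is where pairing the quadratic term $B$ against the high-order test function $A_h^{1+\alpha}\overline V^N$ demands additional regularity of $V$, hence the stronger assumption $\gamma>\frac92$ — so the differences vanish in probability, and (ii) follows from (i) together with the denominator LLN. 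The main obstacle is precisely this residual/nonlinear control, i.e.\ verifying that $\int_0^T\|A_h^{1+\frac\alpha2}\overline Z^N\|^2dt$ and the nonlinear numerator corrections are genuinely of lower order than the leading term $N^{4+2\alpha-2\gamma}$: since $\alpha$ may be taken arbitrarily large, ``lower order'' here is relative rather than an absolute bound, so one cannot simply invoke boundedness of $Z$ in a fixed space; instead one must compare the regularity exponent of $Z$ — obtained from parabolic smoothing acting on $B(V,V)$ together with $V_0\in\mathcal D(A^{\frac12+\gamma'})$ — against $1+\frac\alpha2$, and it is this comparison, carried out in Section~\ref{sec:regularity} (and in Lemma~\ref{lemma:nonlinear-term} for part (ii)), that produces the admissible windows $\gamma>4$, $\max\{5/4,\gamma/2-1\}<\gamma'<\gamma/2-3/4$, and $\gamma>\frac92$.
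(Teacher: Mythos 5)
Your proposal follows essentially the same route as the paper: the same cancellation of the drift leaving a martingale-over-quadratic-functional error, the same error identity $\nu^N_{z1}-\nu_z=-(\nu^N_{h1}-\nu_h)R^N-\mathcal M^N_z/\mathcal Q^N_z$ (and its hat analogue), the same splitting $V=U+R$ with a shell-wise LLN for the linear quadratic functionals and a lower-order bound for the residual via its extra regularity, and the same reduction of (ii) to the negligibility of the nonlinear numerators (the paper's Lemma~\ref{lemma:nonlinear-term}), which is exactly where $\gamma>\tfrac92$ enters. Your use of Chebyshev on $\mathcal M^N_h$ in place of the paper's shell-wise LLN for the stochastic integrals is a legitimate simplification for consistency alone (the paper's finer statement with rates $N^{\delta}$ is what it later reuses for asymptotic normality).

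One step needs repair as written: when you bound $\mathbb E\langle\mathcal M^N\rangle_T$ by It\^o's isometry and invoke ``the regularity of $Z$,'' the residual contribution requires $\mathbb E\int_0^T\|A^{\rho}R\|^2\,dt<\infty$ for the relevant $\rho$, which is not available -- the regularity of $R$ in Lemma~\ref{lemma:regularity-R} holds only almost surely on $[0,T]$, and the moment bound \eqref{regularity:R-4} holds only up to the stopping times $\tau_m$. The paper therefore runs the Chebyshev/LLN argument for the residual part of the martingale with the localized integrals $\int_0^{T\wedge\tau_m}$ and then uses $\mathbb P(\cup_m\{\tau_m>T\})=1$ to remove the localization, obtaining convergence in probability (which is why part (i) is only weak consistency). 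Without this localization your isometry computation for the $R$-part is not justified; with it, your argument matches the paper's.
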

The proof of Theorem~\ref{thm:consistency} is presented in Section~\ref{sec:consistency}. 

\begin{theorem}[Asymptotic normality]\label{thm:normality}
    Assume that $\gamma>4$, and suppose that $V$ is the solution to the system \eqref{PE-system} with an initial condition $V(0) = V_0 \in \mathcal D(A^{\frac12+\gamma'})$ for all $\max\left\{\frac54, \frac\gamma2-1\right\}<\gamma'<\frac\gamma2-\frac34$. Then for any $\alpha > \gamma-1$ and positive rational $q$,
    $\nu^N_{h1}$ and $\widehat{\nu_{z1}^N}$ are jointly asymptotically normal with rate $N^2$: 
\begin{equation*}
 N^2\left(
    \begin{array}{c}
    \nu_{h1}^N - \nu_h \\
    \widehat{\nu^N_{z1}}-\nu_z
    \end{array}
    \right)
    \stackrel{\mathcal{D}}\longrightarrow \mathcal{N}\left(
    \left[
    \begin{array}{c}
    0\\0
    \end{array}
    \right],
    \left[
    \begin{array}{cc}
    \frac{2\nu_h}{\pi T} \frac{(2+\alpha-\gamma)^2}{2+2\alpha-2\gamma} & - \frac{2q\nu_h}{\pi T} \frac{(2+\alpha-\gamma)^2}{2+2\alpha-2\gamma}\\
    -\frac{2q\nu_h}{\pi T} \frac{(2+\alpha-\gamma)^2}{2+2\alpha-2\gamma} & \frac{(2q^2 +q+1)\nu_h + (1+\frac1q)\nu_z}{\pi T} \frac{(2+\alpha-\gamma)^2}{2+2\alpha-2\gamma}
    \end{array}
    \right]
    \right),
\end{equation*}
where $\mathcal{N}(\mu, \Sigma)$ represents a multivariate normal random variable with mean vector $\mu$ and covariance matrix $\Sigma$.
\end{theorem}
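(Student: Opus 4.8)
The proof proceeds by the standard splitting argument: write $V = U + R$, where $U$ solves the linear system \eqref{PE-original-linear} with $U(0)=0$ (handling the initial data contribution is a separate, easier piece, or one incorporates it in $R$), and $R = V - U$ is the nonlinear residual, which by the regularity analysis of Section~\ref{sec:regularity} lives in a smoother space than $U$. For each estimator I substitute $V^N = U^N + R^N$ into the explicit formulas for $\nu_{h1}^N$ and $\widehat{\nu_{z1}^N}$ and rewrite, say, $N^2(\nu_{h1}^N - \nu_h)$ as a ratio whose denominator is $\int_0^T \|A_h^{1+\alpha/2}\overline V^N\|^2\,dt$ and whose numerator, after using the equation \eqref{PE-original-finitemode} to replace $d\overline V^N$, collapses to a stochastic-integral term $\int_0^T \langle A_h^{1+\alpha}\overline V^N, \sigma_0\sum |\bk|^{-\gamma}c_{\bk}\phi_{\bk}dW_{\bk}\rangle$ plus nonlinear cross terms. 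The same is done for the $\widehat{\phantom{V}}$-component, producing the second coordinate.

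**Key steps, in order.** First, I would establish that the denominators, suitably normalized, converge in probability to deterministic constants: by Lemma~\ref{lemma:order-linear}, $\mathbb E\int_0^T\|A_h^{1+\alpha/2}\overline U^N\|^2\,dt \asymp \sigma_0^2\frac{T}{2\nu_h}\frac{\pi}{2+\alpha-\gamma}N^{4+2\alpha-2\gamma}$ (taking $2\beta = 2+\alpha$), and an $L^2$ (variance) estimate built from \eqref{order-Var} shows the fluctuation is lower order, so $N^{-(4+2\alpha-2\gamma)}\int_0^T\|A_h^{1+\alpha/2}\overline V^N\|^2\,dt \to \sigma_0^2\frac{T}{2\nu_h}\frac{\pi}{2+\alpha-\gamma}$ in probability (the $R^N$ contribution is negligible by the improved regularity, exactly as in the consistency proof). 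Likewise for the $\widehat{\phantom{V}}$-denominator using \eqref{order-Uhat}, which converges after normalization by $N^{4+2\alpha-2\gamma}$ to $\sigma_0^2\frac{T}{\nu_h+\nu_z/q}\frac{\pi}{2+\alpha-\gamma}$. Second, for the numerators I show the nonlinear cross-terms, divided by $N^{2}\cdot N^{4+2\alpha-2\gamma}$, vanish in probability — this is where Lemma~\ref{lemma:nonlinear-term} and the condition $\alpha>\gamma-1$ (strictly stronger than the consistency range $\alpha>\gamma-2$) enter, giving the extra power needed for the faster $N^2$ rate. Third, the surviving martingale terms: $\zeta_1^N := N^{-(2+2\alpha-2\gamma)}\sigma_0\int_0^T\langle A_h^{1+\alpha}\overline U^N, \sum|\bk|^{-\gamma}c_{\bk}\phi_{\bk}dW_{\bk}\rangle$ and its $\widehat{\phantom{V}}$-analogue $\zeta_2^N$. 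I compute their joint quadratic (co)variation: $\langle \zeta_1^N\rangle_T = N^{-(4+4\alpha-4\gamma)}\sigma_0^2\sum_{k_3=0,|\bk|\le N}|\bk|^{4+4\alpha}|\bk|^{-2\gamma}\int_0^T|\overline U_{\bk}|^2\,dt$ and so on, and using Lemma~\ref{lemma:order-linear}-type sums these converge to explicit constants; the cross-variation $\langle\zeta_1^N,\zeta_2^N\rangle_T$ comes from modes with $k_3=0$ appearing in both $\overline V$ and $\widehat V$ — but $\widehat V$ has $|\bk'|=\sqrt q|k_3|\neq 0$, so $k_3\neq 0$ there, meaning the two martingales are driven by \emph{disjoint} families of $W_{\bk}$ and the cross-variation is zero. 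Then apply the martingale CLT (the version recalled in the Appendix) to get joint asymptotic normality of $(\zeta_1^N,\zeta_2^N)$ with a diagonal limiting covariance. Finally, Slutsky: divide by the (deterministic) denominator limits and assemble — the off-diagonal entries of $\Sigma$ arise \emph{not} from correlation of the noise but because $\widehat{\nu_{z1}^N}$ contains $\nu_{h1}^N$ as an additive ingredient (the last line of \eqref{eq:nuz1} with \texttt{tilde} replaced by \texttt{hat}), so $N^2(\widehat{\nu_{z1}^N}-\nu_z)$ picks up a copy of $-N^2(\nu_{h1}^N-\nu_h)$ times the limit of $\frac{\int_0^T\langle A_hA^\alpha\widehat V^N, A_z\widehat V^N\rangle dt}{\int_0^T\|A_zA^{\alpha/2}\widehat V^N\|^2 dt}$, which one checks tends to $q$. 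Propagating $\mathrm{Var}(\zeta_1^N)$-type constants through this linear map produces the stated covariance matrix.

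**Main obstacle.** The delicate point is the bookkeeping in the numerator decomposition for $\widehat{\nu_{z1}^N}$: it is not a clean MLE for a single parameter but is built by plugging $\nu_{h1}^N$ into the baroclinic-type likelihood, so one must carefully track how the error $\nu_{h1}^N - \nu_h$ propagates, verify that the coefficient multiplying it converges (to $q$) with a negligible fluctuation, and confirm that the residual stochastic term in the $\widehat V$-equation — after subtracting the $\nu_h$-part — is exactly the $\widehat{\phantom{V}}$-projected noise martingale and nothing else, using the identity $A_h\widehat f = qA_z\widehat f$ so that $\nu_h A_h\widehat V + \nu_z A_z\widehat V = (\nu_h+\nu_z/q)A_h\widehat V$ combines the two viscosities into one effective coefficient. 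Getting the constants in $\Sigma$ to come out precisely as stated requires matching the sharp asymptotics $\sim$ vs. $\asymp$ in Lemma~\ref{lemma:order-linear} and Lemma~\ref{lem:U}; the rate $N^2$ and the necessity of $\alpha>\gamma-1$ both hinge on these sharp (not merely order-of-magnitude) estimates surviving the division. Everything else — tightness/uniform integrability for the martingale CLT, negligibility of initial-data terms, Slutsky — is routine given the regularity results already assembled in Sections~\ref{sec:linear} and~\ref{sec:regularity}.
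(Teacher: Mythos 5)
Your overall architecture is exactly the paper's: split $V=U+R$, reduce to the linear stochastic integrals, prove a law of large numbers for their quadratic variations and invoke the CLT for stochastic integrals (Lemma~\ref{lemma:CLT}), note that $\overline U^N$ and $\widehat U^N$ are driven by disjoint families of $W_{\bk}$ (since $k_3=0$ for the former and $k_3\neq 0$ for the latter) so the limiting pair is independent with diagonal covariance, and then obtain the off-diagonal entries of $\Sigma$ from the fact that $\widehat{\nu_{z1}^N}$ carries a copy of $q(\nu_h-\nu_{h1}^N)$ via $A_h\widehat f=qA_z\widehat f$. Your covariance bookkeeping and the constants from Lemma~\ref{lemma:order-linear} are right.

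One step of your plan is wrong as stated, though it happens to be harmless, and you should see why. You claim the numerators collapse to a stochastic integral \emph{plus nonlinear cross terms}, and that these are killed at rate $N^2$ by Lemma~\ref{lemma:nonlinear-term}. Two problems. First, Lemma~\ref{lemma:nonlinear-term} gives convergence to zero with no rate; multiplied by $N^2$ it proves nothing, and the paper's closing remark of Section~\ref{sec:normality} points out that precisely such a rate-$N^2$ nonlinear estimate is open even for the 2D NSE --- this is why asymptotic normality is \emph{not} claimed for $\nu_{h2}^N$, $\nu_{h3}^N$. Second, and fortunately, for the type-1 estimators there are no leftover nonlinear terms at all: $\nu_{h1}^N$ and $\widehat{\nu_{z1}^N}$ include the full $B_N(V,V)$ term in their definitions, which cancels exactly against the drift of \eqref{PE-original-finitemode}, yielding the clean representations \eqref{nuh1N} and \eqref{hatnuz1N}. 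What genuinely remains to be controlled at rate $N^2$ --- and what your proposal does not explicitly address --- is the residual part of the stochastic integral, $\int_0^T\langle A_h^{1+\alpha-\frac\gamma2}\overline R^N,\sum c_{\bk}\phi_{\bk}\,dW_{\bk}\rangle$ and its hat analogue. This is handled by Lemma~\ref{lemma:stochastic-term}(ii) with $\overline\delta_2=\widehat\delta_2=2$, admissible only because $\alpha>\gamma-1$ gives $4+2\alpha-2\gamma>2$. That, together with the LLN condition $2+4\alpha-4\gamma>-2$ for the quadratic variations, is where $\alpha>\gamma-1$ actually enters --- not through the nonlinearity.
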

The proof of Theorem~\ref{thm:normality} is deferred to Section~\ref{sec:normality}.

From Theorem~\ref{thm:normality}, it follows that the estimators $\nu_{h1}^N$ and $q\nu_{h1}^N + \widehat{\nu_{z1}^N}$ are asymptotically uncorrelated.  This will become clear once we rewrite $\nu_{h1}^N$ and $\widehat{\nu_{z1}^N}$ in equations \eqref{nuh1N} and \eqref{hatnuz1N}, and use the fact that the linear components of $\overline V^N$ and $\widehat V^N$ are independent.

\begin{remark}
    From the covariance matrix $\Sigma$ in Theorem 4.2, we note that the positive constant $q\in\mathbb Q$ that minimizes $f(q;\nu_h,\nu_z)=(2q^2 +q+1)\nu_h + (1+\frac1q)\nu_z$ is the optimal choice for minimum variance of $\widehat{\nu_{z1}^N}$. However, such a choice depends on the parameters of interest $\nu_h$ and $\nu_z$.
\end{remark}

\section{Regularity of the Solution}\label{sec:regularity}

This section focuses on the regularity analysis of $V$ defined in \eqref{PE-system}, as well as the residual $R:= V - U$ where $U$ satisfies \eqref{PE-original-linear}. These analytical properties are of independent theoretical interest, in addition to being fundamentally used in the proofs of the main theorems. The decomposition $V = U + R$ splits the solution $V$ to the system~\eqref{PE-system} into a linear part $U$, which satisfies a linear stochastic PDE with the initial condition $U(0)=0$, and a nonlinear part $R$, which satisfies the following nonlinear random PDE:
\begin{align}\label{eqn:R}
    dR + (\nu_h A_h R + \nu_z A_z R + f_0 \mathcal P_h R^{\perp}) dt + B(R+U,R+U) dt =0,
\end{align}
with initial data $R(0) = V_0$. 

We start with a result on the global well-posedness of the $3D$ viscous PE. 
\begin{lemma}\label{lemma:regularity-V}
  Assume that $V_0\in \mathcal D(A^{\eta})$ a.s. with $\eta\geq 1$ and $\gamma>\frac72$. Then, for any $\gamma'<\frac\gamma2-\frac34$ and
  \[
  \beta=
\begin{cases}
\eta, &\eta<\frac\gamma2-\frac34,
\\
\gamma', &\eta\geq \frac\gamma2-\frac34,
\end{cases}
  \]
  there exists a unique, $H$-valued, $\mathcal F_t$-adapted process $V$ such that
  \begin{equation}\label{regularity:V-1}
        V\in L_{\text{loc}}^2((0,\infty);\mathcal D(A^{\beta+\frac12})\cap C([0,\infty); \mathcal D(A^{\beta})) \quad a.s.,
  \end{equation}
  and so that for each $t\geq 0$,
  \[
   V(t) + \int_0^t \left( V\cdot \nabla_h V + w(V)\pp_z V -\nu_h \Delta_h V - \nu_z \pp_{zz}V + f_0 V^\perp\right) d\tilde t = V_0 + \sigma W(t) \text{ in } H.
  \]
\end{lemma}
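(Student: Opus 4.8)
The plan is to use the splitting $V=U+R$ employed throughout Section~\ref{sec:regularity}: $U$ solves the linear system \eqref{PE-original-linear} with $U(0)=0$, and the residual $R=V-U$ solves the random (non-stochastic) equation \eqref{eqn:R} with $R(0)=V_0$. Since $U(0)=0$, Lemma~\ref{lemma:regularity-linear} applies with arbitrarily large $\eta$, so for every $\gamma'<\frac\gamma2-\frac34$ we get
\[
U,\ \widetilde U\in L^2(\Omega;L^2_{\text{loc}}((0,\infty);\mathcal D(A^{\gamma'+\frac12}))\cap C([0,\infty);\mathcal D(A^{\gamma'}))).
\]
Because $\gamma>\frac72$, one can fix $\gamma'\in(1,\frac\gamma2-\frac34)$, which gives $U\in C([0,\infty);\mathcal D(A))$ and $U\in L^2_{\text{loc}}((0,\infty);\mathcal D(A^{\gamma'+\frac12}))$ almost surely. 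It therefore suffices to prove, pathwise (i.e.\ for a.e.\ fixed $\omega$), that \eqref{eqn:R} has a unique solution $R$ enjoying \eqref{regularity:V-1}; since \eqref{eqn:R} contains no stochastic integral, this is a deterministic parabolic problem with $U$ entering as a forcing of controlled regularity.

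\textbf{Local well-posedness of $R$.} First I would run a Galerkin scheme for \eqref{eqn:R}: project onto $H_n$, solve the resulting finite ODE system, and derive a priori bounds in $\mathcal D(A^{\beta})$ on a time interval whose length depends only on $\|V_0\|_{\mathcal D(A^\beta)}$ and $\sup_{[0,T]}\|U\|_{\mathcal D(A)}$. Here one uses the standard product and commutator estimates for the bilinear term $B(\cdot,\cdot)$ on the torus, the fact that the Coriolis term $f_0\mathcal P_h R^{\perp}$ is pointwise orthogonal to $R$ and to $A_hR$, $A_zR$ (so it drops out of every energy identity, cf.\ the remark after Lemma~\ref{lemma:regularity-linear}), and that the dissipation $\nu_hA_h+\nu_zA_z$ is comparable to $\nu A$ with $\nu=\min\{\nu_h,\nu_z\}>0$. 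Passing to the limit yields a local strong solution, and a Gronwall estimate on the difference of two solutions (in $L^2$, using the $\mathcal D(A^{1/2})$ regularity to control the nonlinearity) gives uniqueness.

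\textbf{Global a priori bound.} The core of the argument would be to upgrade the local solution to a global one by reproducing, for $R$, the Cao--Titi estimates for the deterministic $3D$ viscous PE \cite{cao2007global} (see also \cite{glatt2008stochastic,debussche2012global} for the stochastic version). Concretely, one performs the barotropic--baroclinic decomposition $R=\overline R+\widetilde R$: the barotropic component solves a $2D$ Navier--Stokes type equation forced by terms built from $\widetilde R$, $U$ and the nonlinearity, for which uniform-in-time $H^1$ bounds are classical; for the baroclinic component $\widetilde R$ one uses the hydrostatic structure to estimate successively $\|\widetilde R\|$, then $\|\partial_z\widetilde R\|$, then $\|\nabla_h\widetilde R\|$ via the anisotropic Ladyzhenskaya/log-type inequalities exactly as in \cite{cao2007global}. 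The only genuinely new feature is that $B(R+U,R+U)$ contains $U$; the resulting extra terms are all controlled by Cauchy--Schwarz and interpolation and absorbed into the dissipation, and this is where $U\in C(\mathcal D(A^{\gamma'}))\cap L^2_{\text{loc}}(\mathcal D(A^{\gamma'+\frac12}))$ with $\gamma'>1$ is needed — hence the hypothesis $\gamma>\frac72$. This produces a global bound $R\in C([0,\infty);\mathcal D(A^{1/2}))\cap L^2_{\text{loc}}((0,\infty);\mathcal D(A))$, and then a further (now linear) parabolic estimate gives $R\in C([0,\infty);\mathcal D(A))$.

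\textbf{Bootstrap and conclusion.} To reach the stated regularity I would bootstrap: once $R$ is globally in $C(\mathcal D(A))$, \eqref{eqn:R} becomes a linear parabolic equation for $R$ with forcing $-B(R+U,R+U)-f_0\mathcal P_h R^{\perp}$ and initial datum $V_0\in\mathcal D(A^{\eta})$; invoking maximal-regularity (or the explicit spectral) estimates for $\nu_hA_h+\nu_zA_z$ together with the anisotropic bounds on $B$ and the regularity $U\in L^2_{\text{loc}}(\mathcal D(A^{\gamma'+\frac12}))$, one raises the regularity of $R$ in finitely many steps up to the order $\beta$ permitted by $V_0$ and by $U$, i.e.\ to $R\in L^2_{\text{loc}}((0,\infty);\mathcal D(A^{\beta+\frac12}))\cap C([0,\infty);\mathcal D(A^{\beta}))$. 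Adding back $U$ with the same regularity gives \eqref{regularity:V-1} for $V=U+R$; the integral identity follows by passing to the limit in the Galerkin equations, and uniqueness of $V$ from uniqueness of $R$ (equivalently, by a direct Gronwall argument on the difference of two solutions of \eqref{PE-system}). The main obstacle is the global $\mathcal D(A)$ a priori estimate for $R$: it is not a small perturbation of the linear theory but requires carrying the full Cao--Titi structure — the barotropic/baroclinic split and the anisotropic nonlinear estimates — while keeping track of the additional $U$-dependent terms, and it is precisely this step that forces the assumptions $\gamma>\frac72$ and $\eta\ge1$.
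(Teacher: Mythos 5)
Your overall architecture coincides with the paper's: split $V=U+R$ with $U(0)=0$ so that Lemma~\ref{lemma:regularity-linear} gives $U\in L^2_{\text{loc}}(\mathcal D(A^{\gamma'+\frac12}))\cap C(\mathcal D(A^{\gamma'}))$ for any $\gamma'<\frac\gamma2-\frac34$, then treat \eqref{eqn:R} as a pathwise deterministic problem, obtain a base-level global bound for $R$, and bootstrap to $\mathcal D(A^\beta)$ via energy estimates and the Lions--Magenes lemma. The one substantive difference is how the base-level global existence is obtained. You propose to re-derive the full Cao--Titi machinery (barotropic/baroclinic split of $R$, successive anisotropic estimates on $\|\widetilde R\|$, $\|\partial_z\widetilde R\|$, $\|\nabla_h\widetilde R\|$) with the extra $U$-dependent forcing terms tracked throughout. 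The paper instead cites \cite{guo20093d} as a black box for global well-posedness of the stochastic $3D$ viscous PE at the $\mathcal D(A^{\frac12})$ level, which immediately yields $R\in L^2_{\text{loc}}(\mathcal D(A))\cap L^\infty_{\text{loc}}(\mathcal D(A^{\frac12}))$, and then only performs the higher-order estimates: testing \eqref{eqn:R} with $A^2R$ (using Lemma~\ref{lemma:a2}) for the step $\beta=1$, and with $A^{2\beta}R$ (using Lemma~\ref{lemma:a1}) for $\beta>1$, with Gronwall and Lions--Magenes closing each step. Your route is viable — it is essentially what \cite{guo20093d,debussche2012global} carry out — but it reproves a known result and is by far the longest part of your argument; the citation shortcut is what keeps the paper's proof to two energy estimates. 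Your identification of where $\gamma>\frac72$ enters (it forces $\sigma\in L_2(H,\mathcal D(A^{1+\tilde\varepsilon}))$, hence $U$ regular enough for the $\mathcal D(A)$-level estimate of $R$) matches the paper's reasoning.
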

\begin{proof}
Note that $\sigma\in L_2(H, \mathcal D(A^{\frac\gamma2-\frac34-\varepsilon}))$ for sufficiently small $\varepsilon>0$. Since $\gamma>\frac72$, we specifically have $\sigma\in L_2(H, \mathcal D(A^{1 + \tilde\varepsilon}))$ for some small $\tilde{\varepsilon}>0$. From the definition of $\beta$, it is clear that when $\eta=1$ then $\beta=1$, and when $\eta>1$,  it follows at once that  $\beta> 1$ a large enough $\gamma'$. First we focus on the case $\beta=\eta=1$, and then proceed to $\beta>1$. For brevity, some estimates below are derived rather informally, but these estimates can be rigorously justified through the traditional  Galerkin approximation arguments: first obtain the estimates for the corresponding Galerkin scheme and then pass to the limit.

Assume that $U$ solves the linear system \eqref{PE-original-linear} with initial data $U(0)=0$. Then by Lemma~\ref{lemma:regularity-linear} we establish the regularity of $U$: 
\begin{equation}\label{regularity:U-1}
    U \in L_{\text{loc}}^2((0,\infty);\mathcal D(A^{\beta+\frac12}))\cap C([0,\infty); \mathcal D(A^{\beta})) \quad a.s..
\end{equation}
Now consider $R=V-U$ with $R(0) = V_0$. 
We first recall from \cite{guo20093d} that equation~\eqref{regularity:V-1} holds with $\beta=\frac12$, except that  $C([0,\infty); \mathcal D(A^{\frac12}))$ is replaced by $L_{\text{loc}}^\infty((0,\infty); \mathcal D(A^{\frac12}))$. We claim that indeed one has $C([0,\infty); \mathcal D(A^{\frac12}))$ as well. The result from \cite{guo20093d}, together with \eqref{regularity:U-1}, implies that
\begin{equation}\label{regularity:R-1}
     R \in L_{\text{loc}}^2((0,\infty);\mathcal D(A))\cap L_{\text{loc}}^\infty((0,\infty); \mathcal D(A^{\frac12})) \quad a.s.. 
\end{equation}
Once we prove $\frac{dR}{dt}\in L_{\text{loc}}^2((0,\infty);H)$, the regularity $R \in C([0,\infty); \mathcal D(A^{\frac12}))$ follows from the Lions–Magenes lemma. To this end, we estimate $\frac{dR}{dt}$ using equation~\eqref{eqn:R}. All the linear terms are readily addressed, so we focus on the nonlinear terms only. By Lemma~\ref{lemma:a2}, we have
\begin{align*}
    \|B(U+R,U+R)\| \leq C(\|A^{\frac12}U\| + \|A^{\frac12} R\|)(\|AU\| + \|AR\|).
\end{align*}
Using the regularity of $U$ and $R$ from equation~\eqref{regularity:U-1} and \eqref{regularity:R-1} , respectively, we conclude that $B(U+R,U+R)\in L_{\text{loc}}^2((0,\infty);H)$, which implies that $\frac{dR}{dt}\in L_{\text{loc}}^2((0,\infty);H)$, as required.

For $\beta=\eta=1$, multiplying \eqref{eqn:R} by $A^{2} R$ and integrating on $\mathbb T^3$ gives 
\begin{align}\label{R-1}
    \frac d{dt} \|A R\|^2 + 2\nu_h \|A_h^{\frac12} AR\|^2  + 2\nu_z \|A_z^{\frac12} A R\|^2  - 2 \langle A^{\frac12} B(R+U,R+U), A^{\frac32} R\rangle =0.
\end{align}
Applying Lemma~\ref{lemma:a2} and noticing that $\nabla_h\cdot \overline{R+U} = \nabla_h\cdot \overline V=0$, as the norms $\|A^{\frac n2} f\|\sim \|f\|_{\dot{H}^n}$ are equivalent for $n\in\mathbb N$, we get 
\begin{align*}
    \|A^{\frac12} B(R+U,&R+U)\| \leq C \|B(R+U,R+U)\|_{\dot{H}^1} 
    \\
    \leq &  C\left(\|B(\nabla(R+U),R+U)\| + \|B(R+U,\nabla(R+U))\| \right)
    \\
    \leq &C\left(\|A R\|^{\frac12} \|A^{\frac32} R\|^{\frac12} +  \|A U\|^{\frac12} \|A^{\frac32} U\|^{\frac12}\right)
    \left( \|A^{\frac12} R\|^{\frac12} \|A R\|^{\frac12}  +  \|A^{\frac12} U\|^{\frac12} \|A U\|^{\frac12}  \right).
\end{align*}
By Young's inequality, we estimate the nonlinear terms in \eqref{R-1} as
\begin{align*}
    |2\langle A^{\frac12}  B(R+U,&R+U), A^{\frac32} R\rangle| \leq C \|A^{\frac12} B(R+U,R+U)\| \|A^{\frac32} R\| 
    \\
    \leq &C \left(\|A R\|^{\frac12} \|A^{\frac32} R\|^{\frac12} +  \|A U\|^{\frac12} \|A^{\frac32} U\|^{\frac12}\right)
    \\
    &\times\left( \|A^{\frac12} R\|^{\frac12} \|A R\|^{\frac12}  +  \|A^{\frac12} U\|^{\frac12} \|A U\|^{\frac12}  \right) \|A^{\frac32} R\|
    \\
    \leq & \nu_h \|A_h^{\frac12} A R\|^2  + \nu_z \|A_z^{\frac12} A R\|^2 
    \\
    &+ C\Big( \|A^{\frac12} R\|^{2} \|A R\|^{2}   + \|A^{\frac12} U\|^{2} \|A U\|^{2} + \|A U\|^{2} \|A^{\frac32} U\|^2 +1 \Big) \Big(\|A R\|^2  + 1\Big).
\end{align*}
Therefore, we obtain
\begin{align}\label{ine:R}
    &\frac d{dt} \|A R\|^2 + \nu_h \|A_h^{\frac12} A R\|^2  + \nu_z \|A_z^{\frac12} A R\|^2 \nonumber
    \\
    \leq & C\Big( \|A^{\frac12} R\|^{2} \|A R\|^{2}   + \|A^{\frac12} U\|^{2} \|A U\|^{2} + \|A U\|^{2} \|A^{\frac32} U\|^2 +1 \Big) \Big(\|A R\|^2  + 1\Big). 
\end{align}
Thanks to \eqref{regularity:U-1} and \eqref{regularity:R-1}, as $R(0)=V_0\in \mathcal D(A)$, using the Gronwall inequality we infer that 
\begin{equation*}
    R \in L_{\text{loc}}^2((0,\infty);\mathcal D(A^{\frac32}))\cap L^\infty_{\text{loc}}((0,\infty); \mathcal D(A)) \quad a.s..
\end{equation*}
Thanks to the nonlinear estimate for $A^\frac12 B(U+R,U+R)$ above, we obtain $\frac{dR}{dt}\in L_{\text{loc}}^2((0,\infty);\mathcal D(A^{\frac12}))$. Applying the Lions–Magenes lemma once again, we conclude that
\begin{equation}\label{regularity:R-2}
    R \in L_{\text{loc}}^2((0,\infty);\mathcal D(A^{\frac32}))\cap C([0,\infty); \mathcal D(A)) \quad a.s..
\end{equation}
Combining \eqref{regularity:U-1} and \eqref{regularity:R-2}, we arrive at the desired \eqref{regularity:V-1}, assuming $\beta=\eta=1$.


Next, we consider the case $\beta >1$ and $\eta\geq \beta>1$. 
Using Lemma \ref{lemma:a1}, by virtue of the nonlinear structure of $B(f,g)$ and since $R+U=V$ has zero mean, we estimate the nonlinear term as
\begin{align*}
    |\langle A^{\beta} B(R+U,&R+U), A^{\beta} R\rangle| = |\langle A^{\beta-\frac12} B(R+U,R+U), A^{\beta+\frac12} R\rangle|
    \\
    \leq & C \|A^{\beta}(R+U)\| \|A^{\frac12+\frac34+\varepsilon}(R+U)\| \|A^{\beta+\frac12} R\|
    \\
    \leq &C \|A^{\frac32} V\| (\|A^\beta R\| + \|A^\beta U\|) \|A^{\beta+\frac12} R\|
    \\
    \leq &  \nu_h \|A_h^{\frac12} A^{\beta} R\|^2  + \nu_z \|A_z^{\frac12} A^{\beta} R\|^2 + C \|A^{\frac32} V\|^2 (\|A^\beta R\|^2 + \|A^\beta U\|^2),
\end{align*}
where we take $\varepsilon\leq \frac14$.
Thanks to previous step, we know that $V\in L_{\text{loc}}^2((0,\infty);\mathcal D(A^{\frac32})$. In view of \eqref{regularity:U-1} and combined with Gronwall inequality, we get 
\begin{equation}\label{regularity:R-5}
     R \in L_{\text{loc}}^2((0,\infty);\mathcal D(A^{\beta+\frac12}))\cap C([0,\infty); \mathcal D(A^\beta)) \quad a.s.,
 \end{equation}
 where the continuity in time follows from the Lions–Magenes lemma.
 Combining \eqref{regularity:U-1} and \eqref{regularity:R-5}, we arrive at the desired \eqref{regularity:V-1} when $\beta>1$. 
 The proof is complete. 

\end{proof}

\begin{remark}
    In contrast to Lemma~\ref{lemma:regularity-linear}, where we obtained `better regularity' for the linear parts $\overline{U}$ and $\widehat{U}$ due to dimension reduction, we do not expect similar results to hold for $\overline{V}$ and $\widehat{V}$. The reason is that the nonlinear terms will drag the impact of $\widetilde V$ and $V-\widehat{V}$, which are three-dimensional, into the evolution of $\overline{V}$ and $\widehat{V}$, respectively. 
\end{remark}

We next present a result on regularity of the residual $R$. 

\begin{lemma}\label{lemma:regularity-R}
  Assume that $\gamma>4$ and $V_0\in\mathcal D(A^{\gamma'+\frac12})$ for all $\max\left\{\frac54, \frac\gamma2-\frac54\right\}<\gamma'<\frac\gamma2-\frac34$. Suppose that $V=U+R$ is the solution to \eqref{PE-system} 
  , where $U$ solves \eqref{PE-original-linear} with initial data $U(0)=0$, while $R$ solves \eqref{eqn:R} with initial data $R(0)=V_0$. Then for any $T>0$, we have 
   \begin{equation}\label{regularity:R-3}
   \sup\limits_{t\in[0,T]} \|A^{\gamma'+\frac12} R\|^2 + \int_0^T \|A^{\gamma'+1} R\|^2 < \infty, \quad a.s..
   \end{equation}
    Moreover, for an increasing sequence of stopping times $\tau_n$, with $\tau_n\to \infty$,
 \begin{equation}\label{regularity:R-4}
     \mathbb E\left( \sup\limits_{t\in[0,\tau_n]} \|A^{\gamma'+\frac12} R\|^2 + \int_0^{\tau_n} \|A^{\gamma'+1} R\|^2  \right)<\infty.
 \end{equation}
\end{lemma}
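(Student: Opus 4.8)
The plan is to run an energy estimate on the nonlinear residual equation \eqref{eqn:R} at the regularity level $A^{\gamma'+\frac12}$, exactly mirroring the structure of the last two steps in the proof of Lemma~\ref{lemma:regularity-V} but now bootstrapped up to the higher exponent $\gamma'+\frac12$ (recall $\gamma'<\frac\gamma2-\frac34$, so this is precisely the regularity the linear part $U$ enjoys by Lemma~\ref{lemma:regularity-linear}, and it is also the regularity of the initial data $V_0$). First I would record that by Lemma~\ref{lemma:regularity-V} (applicable since $V_0\in\mathcal D(A^{\gamma'+\frac12})$ with $\gamma'+\frac12>1$ and $\gamma>4>\frac72$) one already has $V\in L^2_{\text{loc}}((0,\infty);\mathcal D(A^{\gamma'+1}))\cap C([0,\infty);\mathcal D(A^{\gamma'+\frac12}))$ almost surely, and similarly $U$ and hence $R=V-U$ inherit the same regularity; so \eqref{regularity:R-3} is essentially a restatement of \eqref{regularity:V-1} for $R$. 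The real content is the moment bound \eqref{regularity:R-4}, which does not follow directly because the pathwise bounds come with random ($\omega$-dependent) constants through Gronwall.

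To get \eqref{regularity:R-4}, I would multiply \eqref{eqn:R} by $A^{2\gamma'+1}R$ and integrate over $\mathbb T^3$, obtaining
\begin{align*}
    \frac{d}{dt}\|A^{\gamma'+\frac12}R\|^2 + 2\nu_h\|A_h^{\frac12}A^{\gamma'+\frac12}R\|^2 + 2\nu_z\|A_z^{\frac12}A^{\gamma'+\frac12}R\|^2 = 2\langle A^{\gamma'}B(R+U,R+U), A^{\gamma'+1}R\rangle,
\end{align*}
where the rotation term drops by skew-symmetry. The nonlinear term I would estimate with Lemma~\ref{lemma:a1} in the form $|\langle A^{\gamma'}B(V,V),A^{\gamma'+1}R\rangle| = |\langle A^{\gamma'-\frac12}B(V,V),A^{\gamma'+\frac32}R\rangle|$... actually, to keep the top-order derivative on $R$ within the dissipation budget I would instead split $V=R+U$ and bound, via Lemma~\ref{lemma:a1} and the fact that $V$ has zero mean, $\|A^{\gamma'-\frac12}B(V,V)\|\lesssim \|A^{\gamma'}V\|\,\|A^{\frac54+\varepsilon}V\|$ (choosing $\varepsilon$ small, which is allowed since $\gamma'>\frac54$ guarantees $\frac54+\varepsilon\le\gamma'$, so $\|A^{\frac54+\varepsilon}V\|\le\|A^{\gamma'}V\|$ up to a constant and is controlled). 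Then Cauchy--Schwarz and Young's inequality absorb $\nu_h\|A_h^{\frac12}A^{\gamma'+\frac12}R\|^2+\nu_z\|A_z^{\frac12}A^{\gamma'+\frac12}R\|^2$ into the left side, leaving
\begin{align*}
    \frac{d}{dt}\|A^{\gamma'+\frac12}R\|^2 + \nu_h\|A_h^{\frac12}A^{\gamma'+\frac12}R\|^2 + \nu_z\|A_z^{\frac12}A^{\gamma'+\frac12}R\|^2 \lesssim \big(\|A^{\gamma'}V\|^2+1\big)\big(\|A^{\gamma'}R\|^2+\|A^{\gamma'}U\|^2+1\big).
\end{align*}

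The final step is to localize by stopping times to convert the pathwise Gronwall argument into an $L^1(\Omega)$ bound. I would set $\tau_n := \inf\{t\ge 0 : \|A^{\gamma'+\frac12}V(t)\|^2 + \int_0^t\|A^{\gamma'+1}V\|^2\,ds \ge n\}\wedge n$; by \eqref{regularity:V-1} (which holds a.s.) these are stopping times with $\tau_n\to\infty$ a.s. On $[0,\tau_n]$ the factor $\|A^{\gamma'}V\|^2$ and the integrals $\int_0^{\tau_n}\|A^{\gamma'}U\|^2$, $\int_0^{\tau_n}\|A^{\gamma'}R\|^2$ are bounded by deterministic constants depending only on $n$ (for the $U$ piece one can take expectations using Lemma~\ref{lemma:regularity-linear}, which gives $\mathbb E\int_0^T\|A^{\gamma'}U\|^2<\infty$; alternatively enlarge $\tau_n$ to also cap the $U$-norm). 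Then Gronwall's inequality applied pathwise on $[0,\tau_n]$ yields $\sup_{t\le\tau_n}\|A^{\gamma'+\frac12}R\|^2 + \int_0^{\tau_n}\|A^{\gamma'+1}R\|^2 \le C_n$ pointwise in $\omega$, hence after taking expectations \eqref{regularity:R-4} follows (the right side is finite, equal to some $C_n<\infty$). The main obstacle is the bookkeeping in the nonlinear estimate: one must verify that the constraint $\max\{\frac54,\frac\gamma2-\frac54\}<\gamma'<\frac\gamma2-\frac34$ leaves enough room to place the derivatives so that (a) the top-order term on $R$ stays absorbable by the anisotropic dissipation $\nu_h A_h + \nu_z A_z$ (using that $\|A^{\frac m2}f\|\sim\|f\|_{\dot H^m}$ and the commutation of $A_h,A_z$ with fractional powers), and (b) the remaining factor $\|A^{\frac54+\varepsilon}V\|$ is genuinely lower order, i.e. controlled by the already-established regularity of $V$ from Lemma~\ref{lemma:regularity-V}; the condition $\gamma'>\frac\gamma2-\frac54$ is exactly what makes $\sigma\in L_2(H,\mathcal D(A^{\gamma'}))$ fail by less than the gap needed, so one should double-check the interplay with Lemma~\ref{lemma:regularity-linear} when bounding the $U$-contributions.
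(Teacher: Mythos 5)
Your overall architecture --- an energy estimate on the noise-free residual equation at level $A^{\gamma'+\frac12}$, the nonlinear term handled by Lemma~\ref{lemma:a1}, localization by stopping times built from norms of $V$, then Gronwall and expectation --- is the paper's approach. But the execution rests on an over-claimed regularity of $V$ that makes several steps fail. Applying Lemma~\ref{lemma:regularity-V} with $\eta=\gamma'+\frac12$ puts you in the regime $\eta\ge\frac\gamma2-\frac34$ (since $\gamma'>\frac\gamma2-\frac54$), so the conclusion is only $V\in L^2_{\text{loc}}((0,\infty);\mathcal D(A^{\gamma''+\frac12}))\cap C([0,\infty);\mathcal D(A^{\gamma''}))$ for $\gamma''<\frac\gamma2-\frac34$; in particular $V\in C([0,T];\mathcal D(A^{\gamma'}))\cap L^2(0,T;\mathcal D(A^{\gamma'+\frac12}))$, and \emph{not} $C([0,T];\mathcal D(A^{\gamma'+\frac12}))\cap L^2(0,T;\mathcal D(A^{\gamma'+1}))$: the noise caps the regularity of $U$, hence of $V$, strictly below $\gamma'+\frac12$ in the sup-in-time scale. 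Consequently (i) \eqref{regularity:R-3} is \emph{not} ``a restatement of \eqref{regularity:V-1} for $R$'' --- the gain of $A^{\frac12}$ for $R$ over $U$ is the entire point of the lemma (cf.\ Remark~\ref{rmk:R-better-than-U}) and must come out of the energy estimate itself; and (ii) your stopping times $\tau_n=\inf\{t:\|A^{\gamma'+\frac12}V(t)\|^2+\int_0^t\|A^{\gamma'+1}V\|^2\,ds\ge n\}\wedge n$ are built from quantities that are not a.s.\ finite (indeed $\gamma'+1>\frac\gamma2-\frac14$, which exceeds the $L^2_t$ regularity exponent of $U$), so there is no guarantee that $\tau_n\to\infty$; they could vanish identically. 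The paper stops instead on $\sup_{t'\le t}\|A^{\gamma'}V\|^2+\int_0^t\|A^{\gamma'+\frac12}V\|^2\,dt'$, which are exactly the a.s.-finite quantities supplied by Lemma~\ref{lemma:regularity-V}.

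Second, the nonlinear estimate has a half-derivative mismatch. You compute $\|A^{\gamma'-\frac12}B(V,V)\|\lesssim\|A^{\gamma'}V\|\,\|A^{\frac54+\varepsilon}V\|$, but that quantity pairs with $\|A^{\gamma'+\frac32}R\|$, which the dissipation does not control; to pair with the absorbable $\|A^{\gamma'+1}R\|$ you need $\|A^{\gamma'}B(V,V)\|$, and since $B$ carries a derivative, Lemma~\ref{lemma:a1} then forces $\gamma'+\frac12$ derivatives onto one factor, giving $\|A^{\gamma'}B(V,V)\|\lesssim\|A^{\gamma'}V\|\,\|A^{\gamma'+\frac12}V\|$. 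Your final differential inequality, with right side $(\|A^{\gamma'}V\|^2+1)(\|A^{\gamma'}R\|^2+\|A^{\gamma'}U\|^2+1)$, is therefore not derivable; the correct right side is $C\|A^{\gamma'}V\|^2\|A^{\gamma'+\frac12}V\|^2$, which is free of $R$ (so no Gronwall factor involving $R$ is even needed), lies in $L^1(0,T)$ a.s., and is bounded by $n^2$ up to the paper's stopping time, after which integration and taking expectations yield \eqref{regularity:R-3} and \eqref{regularity:R-4}.
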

\begin{proof}
Recalling the result of Lemma \ref{lemma:regularity-V}, given that $\gamma'+\frac12 >\frac\gamma2-\frac34$ and $\gamma'< \frac\gamma2-\frac34$, we have
\begin{equation}\label{V-2}
    V\in L_{\text{loc}}^2((0,\infty);\mathcal D(A^{\gamma'+\frac12})\cap C([0,\infty); \mathcal D(A^{\gamma'}))\quad a.s..
\end{equation}
The estimate of $\|A^{\gamma'+\frac12} R\|^2$ implies 
\begin{align*}
    \frac d{dt} \|A^{\gamma'+\frac12} R\|^2 + 2\nu_h \|A_h^{\frac12} A^{\gamma'+\frac12} R\|^2  + 2\nu_z \|A_z^{\frac12} A^{\gamma'+\frac12} R\|^2  - 2 \langle A^{\gamma'} B(V,V), A^{\gamma'+1} R\rangle =0.
\end{align*}
As $\gamma'>\frac54$, thanks to Lemma~\ref{lemma:a1} we obtain 
\begin{align*}
    \langle A^{\gamma'} B(V,V), A^{\gamma'+1} R \rangle \leq C \|A^{\gamma'+\frac12} V\| \|A^{\frac12+\frac34+\varepsilon} V\| \|A^{\gamma'+1} R\| \leq C \|A^{\gamma'+\frac12} V\| \|A^{\gamma'} V\| A^{\gamma'+1} R\|,
\end{align*}
where $\varepsilon>0$ is chosen to be small enough such that $\frac54+\varepsilon\leq \gamma'$.
 Applying Young's inequality, we deduce the following estimate for the nonlinear term
\begin{align*}
    |2 \langle A^{\gamma'} B(V,V), A^{\gamma'+1} R\rangle| \leq C \|A^{\gamma'} V\|^2\|A^{\gamma'+\frac12} V\|^2 +  \nu_h \|A_h^{\frac12} A^{\gamma'+\frac12} R\|^2  + \nu_z \|A_z^{\frac12} A^{\gamma'+\frac12} R\|^2.
\end{align*}
Therefore,
\begin{equation}\label{V-3}
    \frac d{dt} \|A^{\gamma'+\frac12} R\|^2 + \nu_h \|A_h^{\frac12} A^{\gamma'+\frac12} R\|^2  + \nu_z \|A_z^{\frac12} A^{\gamma'+\frac12} R\|^2 \leq C\|A^{\gamma'} V\|^2\|A^{\gamma'+\frac12} V\|^2.
\end{equation}
Now, using \eqref{V-2}, we claim that for any $T>0$
\[
\int_0^T \|A^{\gamma'} V\|^2\|A^{\gamma'+\frac12} V\|^2 dt <\infty \quad a.s.,
\]
and \eqref{regularity:R-3} follows from the Gronwall inequality.  

Next, consider the stopping times defined as
\[
\tau_n= \inf\limits_{t\geq 0} \left\{ \sup\limits_{t'\leq t} \|A^{\gamma'} V\|^2 + \int_0^t \|A^{\gamma'+\frac12} V\|^2  dt' > n \right\}.
\]
It is easy to see that $\tau_n$ is increasing. Furthermore, by applying the Gronwall inequality in \eqref{V-3} and taking the expectation we conclude that 
\eqref{regularity:R-4} holds. Lastly, observe that 
$$\mathbb P(\tau_n<T) = \mathbb P\left( \sup\limits_{t'\leq T} \|A^{\gamma'} V\|^2 + \int_0^T \|A^{\gamma'+\frac12} V\|^2  dt' > n \right),$$ 
from \eqref{V-2} one can infer that $\lim\limits_{n\to \infty} \tau_n = \infty.$
The proof is complete. 
\end{proof}

\begin{remark}\label{rmk:R-better-than-U}
    When $U(0)=0$, by Lemma \ref{lemma:regularity-linear}, for any $\gamma'<\frac\gamma2-\frac34$,
    \[
U ,\widetilde U \in  L_{\text{loc}}^2((0,\infty);\mathcal D(A^{\gamma'+\frac12})) \quad \text{and}\quad \overline U , \widehat U \in  L_{\text{loc}}^2((0,\infty);\mathcal D(A^{\gamma'+\frac34})) \quad a.s..
    \]
    On the other hand, from Lemma \ref{lemma:regularity-R}, we have
    \[
  R \in L_{\text{loc}}^2((0,\infty);\mathcal D(A^{\gamma'+1})), \quad a.s..
    \]
    Consequently, the regularity of $R$ is $A^{\frac12}$ higher than $U$ and $\widetilde U$, and is $A^{\frac14}$ higher than $\overline{U}$ and $\widehat U$. This fact is crucial in the proof of the next lemma. However, as discussed in Remark \ref{rmk:linear-improve}, the regularity of $R$ is the same as the horizontal average of $U$, making the subsequent lemma inapplicable to the horizontal average of $U$.
\end{remark}

\begin{lemma}\label{lemma:ratio-1}
  Assume that $\gamma>4$. Suppose that $V$ is the solution to \eqref{PE-system} with an initial condition $V(0) = V_0 \in \mathcal D(A^{\frac12+\gamma'})$ for all $\max\left\{\frac54, \frac\gamma2-\frac54\right\}<\gamma'<\frac\gamma2-\frac34$, and
  $U$ is the solution to \eqref{PE-original-linear} with $U(0)=0$. Then, for any $\alpha > \frac\gamma2-\frac54$, we have
  \begin{align}
      &\lim\limits_{N\to \infty} \frac{\int_0^T \|A^{1+\alpha} V^N\|^2 dt}{\mathbb E\int_0^T \|A^{1+\alpha} U^N\|^2 dt} = 1,  \label{ratio-1}
      \\
      &\lim\limits_{N\to \infty} \frac{\int_0^T \|A^{1+\alpha} \widetilde V^N\|^2 dt}{\mathbb E\int_0^T \|A^{1+\alpha} \widetilde U^N\|^2 dt} = 1, \label{ratio-2}
      \\
      &\lim\limits_{N\to \infty} \frac{\int_0^T \|A_z A^{\alpha} \widetilde V^N\|^2 dt}{\mathbb E\int_0^T \|A_z A^{\alpha} \widetilde U^N\|^2 dt} = 1, \label{ratio-3}
      \\
      &\lim\limits_{N\to \infty} \frac{\int_0^T \|A_h^{\frac12} A_z^{\frac12} A^{\alpha}  \widetilde V^N\|^2 dt}{\mathbb E\int_0^T \|A_h^{\frac12} A_z^{\frac12} A^{\alpha} \widetilde U^N\|^2 dt} = 1, \label{ratio-4}
  \end{align}
  with probability 1. On the other hand, for all $\max\left\{\frac54, \frac\gamma2-1\right\}<\gamma'<\frac\gamma2-\frac34$ and $\alpha>\frac\gamma2-1$, we have
  \begin{align}
      &\lim\limits_{N\to \infty} \frac{\int_0^T \|A^{1+\alpha} \overline V^N\|^2 dt}{\mathbb E\int_0^T \|A^{1+\alpha} \overline U^N\|^2 dt} = 1,  \label{ratio-5}
      \\
    &\lim\limits_{N\to \infty} \frac{\int_0^T \|A_zA^{\alpha} \widehat V^N\|^2 dt}{\mathbb E\int_0^T \|A_zA^{\alpha} \widehat U^N\|^2 dt} = 1 \quad \text{with probability 1}. \label{ratio-6}
  \end{align}
\end{lemma}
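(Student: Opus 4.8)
The plan is to exploit the decomposition $V = U + R$, where $U$ solves the linear system \eqref{PE-original-linear} with $U(0)=0$ and $R$ solves \eqref{eqn:R} with $R(0)=V_0$, and to treat all six ratios by one common scheme. Set $R^N := V^N - U^N$. Each numerator in \eqref{ratio-1}--\eqref{ratio-6} has the form $\int_0^T \|\Lambda_N V\|^2\,dt$, where $\Lambda_N$ is a fixed nonnegative-weight Fourier multiplier: a product of fractional powers of $A_h$, $A_z$, $A$, composed with the truncation to $\{1\le|\bk|\le N\}$ and, depending on the ratio, a restriction to $k_3=0$, to $k_3\neq0$, or to $|\bk'|=\sqrt{q}|k_3|$. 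Expanding $V = U + R$ and using bilinearity,
\[
\frac{\int_0^T \|\Lambda_N V\|^2\,dt}{\mathbb E\int_0^T \|\Lambda_N U\|^2\,dt}
= \frac{\int_0^T \|\Lambda_N U\|^2\,dt}{\mathbb E\int_0^T \|\Lambda_N U\|^2\,dt}
+ \frac{2\int_0^T \langle \Lambda_N U,\Lambda_N R\rangle\,dt}{\mathbb E\int_0^T \|\Lambda_N U\|^2\,dt}
+ \frac{\int_0^T \|\Lambda_N R\|^2\,dt}{\mathbb E\int_0^T \|\Lambda_N U\|^2\,dt}.
\]
It therefore suffices to prove (A) the first term tends to $1$ almost surely, and (B) the last term tends to $0$ almost surely; the middle term is then handled by Cauchy--Schwarz, since its absolute value is at most twice the geometric mean of the other two.

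For (A): the quantity $\int_0^T \|\Lambda_N U\|^2\,dt = \sum_{1\le|\bk|\le N} w_{\bk}\int_0^T |U_{\bk}|^2\,dt$ with explicit weights $w_{\bk}\ge 0$, and, after grouping Fourier modes into conjugate-pair classes (forced by the reality constraint in the definition of $H$), it is a sum of \emph{independent} random variables whose means and variances are supplied by Lemma~\ref{lem:U}. Enumerate the modes with $|\bk|$ nondecreasing and let $s_m$ be the $m$-th partial sum of the means; by Lemma~\ref{lemma:order-linear}, $s_{m(N)}$ is of order $N^{4(1+\alpha)-2\gamma+1}$ for the projections involving $V$ or $\widetilde V$, and of order $N^{4(1+\alpha)-2\gamma}$ for those involving $\overline V$ or $\widehat V$. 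By Kolmogorov's strong law for independent summands (equivalently, Kronecker's lemma applied to $\sum_m (X_m-\mathbb E X_m)/s_m$), the first term converges to $1$ almost surely provided $\sum_m \Var(X_m)/s_m^2<\infty$; using the variance orders from \eqref{order-Var} and \eqref{order-Vhat} (so $\Var(\int_0^T|U_{\bk}|^2\,dt)\sim|\bk|^{-4\gamma-6}$ in the full-viscosity regime) together with $|\bk^{(m)}|\sim m^{1/d}$, where $d=3$ for the $V$/$\widetilde V$ projections and $d=2$ for the $\overline V$/$\widehat V$ ones, the general term of this series decays like $m^{-8/3}$, respectively $m^{-3}$, hence the series converges unconditionally. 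This gives (A).

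For (B): by Lemma~\ref{lemma:regularity-R}, $\int_0^T \|A^{1+\gamma'}R\|^2\,dt<\infty$ almost surely. In every numerator the weight satisfies $w_{\bk}\lesssim |\bk|^{4(1+\alpha)}$: this is clear for $A^{1+\alpha}$; for $A_zA^\alpha$ it follows from $|k_3|\le|\bk|$; for $A_h^{1/2}A_z^{1/2}A^\alpha$ from $|\bk'|^2|k_3|^2\le|\bk|^4$; and on the cone $|\bk'|=\sqrt{q}|k_3|$ one has $|k_3|^2=(q+1)^{-1}|\bk|^2$, so there every power of $A_z$ converts to a power of $A$ up to a constant. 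Writing $|\bk|^{4(1+\alpha)}=|\bk|^{4(\alpha-\gamma')}|\bk|^{4(1+\gamma')}$ and bounding $|\bk|^{4(\alpha-\gamma')}\le N^{4(\alpha-\gamma')^+}$ for $1\le|\bk|\le N$, we obtain $\int_0^T\|\Lambda_N R\|^2\,dt\le C\,N^{4(\alpha-\gamma')^+}\int_0^T\|A^{1+\gamma'}R\|^2\,dt$. Dividing by $\mathbb E\int_0^T\|\Lambda_N U\|^2\,dt$ and using the orders from Lemma~\ref{lemma:order-linear} above, the exponent of $N$ equals: for $\alpha\le\gamma'$, $2\gamma-5-4\alpha$ (resp. $2\gamma-4-4\alpha$), which is strictly negative precisely because $\alpha>\tfrac{\gamma}{2}-\tfrac54$ (resp. $\alpha>\tfrac{\gamma}{2}-1$); and for $\alpha>\gamma'$, $2\gamma-5-4\gamma'$ (resp. $2\gamma-4-4\gamma'$), strictly negative because the hypotheses impose $\gamma'>\tfrac{\gamma}{2}-\tfrac54$ (resp. $\gamma'>\tfrac{\gamma}{2}-1$). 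Hence (B) holds, and combining (A), (B) and Cauchy--Schwarz for the cross term yields \eqref{ratio-1}--\eqref{ratio-6}.

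I expect the main obstacle to be the almost-sure (rather than in-probability) convergence in step (A): convergence in probability is immediate from Chebyshev and Lemma~\ref{lem:U}, but the a.s. statement requires organizing $\int_0^T\|\Lambda_N U\|^2\,dt$ as a genuine sum of independent random variables, correctly handling the conjugate-symmetry constraint on the noise, and invoking the appropriate strong law (recalled in the Appendix). A secondary delicate point is that the margin of extra regularity that $R$ gains over $U$ and over $\overline{U},\widehat{U}$ (cf. Remark~\ref{rmk:R-better-than-U}) must be \emph{exactly} sufficient; this is where the lower bounds on $\gamma'$ in the hypotheses are consumed, and the gap of one power of $N$ between the three- and two-dimensional orders in Lemma~\ref{lemma:order-linear} is what produces the two distinct thresholds $\tfrac{\gamma}{2}-\tfrac54$ and $\tfrac{\gamma}{2}-1$ on $\alpha$.
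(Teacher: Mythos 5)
Your proposal is correct and follows essentially the same route as the paper: split $V=U+R$, prove the linear ratio tends to $1$ via the strong law of large numbers using the mean/variance asymptotics of Lemma~\ref{lem:U} and the counting bounds, kill the residual ratio using the extra $A^{1/2}$ (resp.\ $A^{1/4}$) regularity of $R$ from Lemma~\ref{lemma:regularity-R}, and absorb the cross term by Cauchy--Schwarz. The only differences are cosmetic: you enumerate modes individually (by conjugate-pair classes) where the paper groups them into shells $|\bk|^2=n$, and you treat the case $\alpha<\gamma'$ explicitly, which the paper elides.
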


\begin{proof}
We will prove these limits one by one, building on the law of large numbers Lemma~\ref{lemma:LLN}. Let $R_{\bk} = \langle R, \phi_{\bk}\rangle$ and $R^N = \sum\limits_{1\leq |\bk|\leq N} R_{\bk}\phi_{\bk}$, and define the projections of $R^N$ as follows:
\[ 
\overline R^N= \sum\limits_{1\leq |\bk|\leq N, k_3=0} R_{\bk}\phi_{\bk},  \quad \widehat R^N= \sum\limits_{1\leq |\bk|\leq N, |\bk'|=\sqrt{q}|k_3|} R_{\bk}\phi_{\bk}, \quad  \widetilde R^N= \sum\limits_{1\leq |\bk|\leq N, k_3\neq 0} R_{\bk}\phi_{\bk}.
\]

\noindent\underline{Proof of \eqref{ratio-1}.}
First note that 
\begin{align*}
    \int_0^T \|A^{1+\alpha} V^N\|^2 dt \leq &\int_0^T \left(\|A^{1+\alpha} U^N\|^2 + \|A^{1+\alpha} R^N\|^2\right) dt 
    \\
    &+ 2 \left(\int_0^T \|A^{1+\alpha} U^N\|^2 dt\right)^{\frac12} \left(\int_0^T \|A^{1+\alpha} R^N\|^2 dt\right)^{\frac12},
\end{align*}
and
\begin{align*}
    \int_0^T \|A^{1+\alpha} V^N\|^2 dt \geq &\int_0^T \left(\|A^{1+\alpha} U^N\|^2 + \|A^{1+\alpha} R^N\|^2\right) dt 
    \\
    &- 2 \left(\int_0^T \|A^{1+\alpha} U^N\|^2 dt\right)^{\frac12} \left(\int_0^T \|A^{1+\alpha} R^N\|^2 dt\right)^{\frac12}.
\end{align*}
Then to get \eqref{ratio-1}, it suffices to show
\[
I_1^N = \frac{\int_0^T \|A^{1+\alpha} U^N\|^2 dt}{\mathbb E\int_0^T \|A^{1+\alpha} U^N\|^2 dt} \to 1 \quad a.s., \text{ and }
I_2^N = \frac{\int_0^T \|A^{1+\alpha} R^N\|^2 dt}{\mathbb E\int_0^T \|A^{1+\alpha} U^N\|^2 dt} \to 0 \quad a.s..
\]
Let 
$$\xi_n:= n^{2\alpha+2}\sum\limits_{\bk\in\mathbb Z^3, |\bk|^2=n} \int_0^T |U_{\bk}|^2(t) dt, \quad b_n:= \sum\limits_{j=1}^n \mathbb E[\xi_j].$$
Notice that there exists some $n\in\mathbb N$ such that $|\bk|^2\neq n, \forall \bk\in \mathbb Z^3$. In such cases, $\xi_n=0$. Thanks to \eqref{order-Uk}, we find
\[
b_n \sim \sum\limits_{\bk\in\mathbb Z^3, 1\leq |\bk|\leq \sqrt n}  |\bk|^{4\alpha+4} |\bk|^{-2\gamma-2} =\sum\limits_{\bk\in\mathbb Z^3, 1\leq |\bk|\leq \sqrt n} |\bk|^{4\alpha-2\gamma+2}.
\]
Given that $\alpha>\frac\gamma2-\frac54$, it follows that $4\alpha-2\gamma+2>-3.$ Using Lemma \ref{lemma:order} we deduce that $b_n\sim n^{2\alpha-\gamma+\frac52}$, and thus $\lim\limits_{n\to \infty} b_n=+\infty.$ 
In view of \eqref{order-Var}, we also obtain
\begin{align*}
    \sum\limits_{n=1}^\infty \frac{Var[\xi_n]}{b_n^2}&\lesssim \sum\limits_{n=1}^\infty\frac{n n^{4\alpha+4} n^{-(2\gamma+3)}}{n^{4\alpha-2\gamma+5} }
     \sim \sum\limits_{n=1}^\infty \frac1{n^3} <\infty,
\end{align*}
where we have used Lemma \ref{counting} to get $\Big|\{\bk\in\mathbb Z^3: |\bk|^2=n\}\Big|\lesssim n.$ Therefore, by applying Lemma \ref{lemma:LLN} we conclude that $\lim\limits_{N\to \infty}I_1^N=1$, a.s..

Regarding the residual part $I^N_2$, \eqref{order-U} gives
\begin{equation*}
    \mathbb E\int_0^T \|A^{1+\alpha} U^N\|^2 dt \sim N^{5+4\alpha-2\gamma}.
\end{equation*}
Using Lemma \ref{lemma:regularity-R}, for $\max\left\{\frac54, \frac\gamma2-\frac54\right\}<\gamma'<\frac\gamma2-\frac34$ we know that
\[
\int_0^T \|A^{1+\gamma'}R\|^2 dt <\infty \quad a.s..
\]
Then, we compute
\[
I_2^N \leq C \frac{\int_0^T \|A^{1+\alpha}R^N\|^2 dt}{N^{5+4\alpha-2\gamma}} \leq C \frac{N^{4(\alpha-\gamma') \int_0^T \|A^{1+\gamma'}R^N\|^2 dt}}{N^{5+4\alpha-2\gamma}} \leq C \frac{\int_0^T \|A^{1+\gamma'}R\|^2 dt}{N^{5+4\gamma'-2\gamma}}.
\]
Since $\gamma'>\frac\gamma2-\frac54$, we conclude that $I_2^N\to 0$ a.s. as $N\to \infty.$ 

\noindent\underline{Proof of \eqref{ratio-2}.} 
The proof follows by similar arguments as for \eqref{ratio-1}. For brevity, we will highlight the key difference, which is the definitions of $\xi_n$ and $b_n$. Here, we have 
\[
\xi_n= n^{2\alpha+2}\sum\limits_{\bk\in\mathbb Z^3, k_3 \neq 0, |\bk|^2=n} \int_0^T \widetilde |U_{\bk}|^2(t) dt,
\]
and 
\begin{align}\label{ratio-2-calculation-1}
    b_n = \sum\limits_{j=1}^n \mathbb E[\xi_j] &\sim \sum\limits_{\bk\in\mathbb Z^3, 1\leq |\bk|\leq \sqrt n} |\bk|^{4\alpha-2\gamma+2} - \sum\limits_{\bk\in\mathbb Z^2, 1\leq |\bk|\leq \sqrt n} |\bk|^{4\alpha-2\gamma+2}\nonumber
\\
&\sim n^{2\alpha-\gamma+\frac52} - n^{2\alpha-\gamma+2} \sim  n^{2\alpha-\gamma+\frac52}.
\end{align}
For the calculations related to $b_n$, we determine its order using Lemma~\ref{lemma:order}. Then, one can conclude $I_1^N \to 1$ and $I_2^N \to 0$ a.s. (where $U^N, R^N$ are now replaced by $\widetilde U^N, \widetilde R^N$) by noting that $\Big|\{\bk\in\mathbb Z^3: k_3\neq 0, |\bk|^2=n\}\Big| \leq \Big|\{\bk\in\mathbb Z^3: |\bk|^2=n\}\Big|\lesssim n.$

\noindent\underline{Proof of \eqref{ratio-3}.} In this case, it is enough to show that 
\[
I_3^N = \frac{\int_0^T \|A_z A^{\alpha} \widetilde U^N\|^2 dt}{\mathbb E\int_0^T \|A_z A^{\alpha} \widetilde U^N\|^2 dt} \to 1 \quad a.s., \ \text{ and } \ 
I_4^N = \frac{\int_0^T \|A_z A^{\alpha} \widetilde R^N\|^2 dt}{\mathbb E\int_0^T \|A_z A^{\alpha} \widetilde U^N\|^2 dt} \to 0 \quad a.s..
\]
Let 
$$
\xi_n= |k_3|^4|\bk|^{4\alpha}\sum\limits_{\bk\in\mathbb Z^3, k_3\neq 0, |\bk|^2=n} \int_0^T \widetilde |U_{\bk}|^2(t) dt, \qquad b_n= \sum\limits_{j=1}^n \mathbb E[\xi_j].
$$ 
Using estimates \eqref{order-Uktilde}, \eqref{ratio-2-calculation-1}, and  \eqref{eqn:same-order}, we arrive at
\begin{align*}
    b_n \sim \sum\limits_{\bk\in\mathbb Z^3, k_3\neq 0, 1\leq |\bk|\leq \sqrt n} |k_3|^4 |\bk|^{4\alpha} |\bk|^{-2\gamma-2} \sim \sum\limits_{\bk\in\mathbb Z^3, k_3\neq 0, 1\leq |\bk|\leq \sqrt n}  |\bk|^{4+4\alpha} |\bk|^{-2\gamma-2}\sim n^{2\alpha-2\gamma+2}.
\end{align*}
By noting that
\[
\Var[\xi_n] \leq \Var\left[|\bk|^{4+4\alpha}\sum\limits_{\bk\in\mathbb Z^3, k_3\neq 0, |\bk|^2=n} \int_0^T \widetilde |U_{\bk}|^2(t) dt\right],
\]
one can follow a procedure similar to that used for \eqref{ratio-1} and \eqref{ratio-2} to obtain the desired result.

\noindent\underline{Proof of \eqref{ratio-4}.} The proof follows a similar approach to that of \eqref{ratio-3}, and we omit its details for the sake of brevity.

\noindent\underline{Proof of \eqref{ratio-5}.} We aim to show
\[
\overline I_1^N := \frac{\int_0^T \|A^{1+\alpha} \overline U^N\|^2 dt}{\mathbb E\int_0^T \|A^{1+\alpha}\overline U^N\|^2 dt} \to 1 \quad a.s., \ \text{ and } \
\overline I_2^N := \frac{\int_0^T \|A^{1+\alpha}\overline R^N\|^2 dt}{\mathbb E\int_0^T \|A^{1+\alpha}\overline U^N\|^2 dt} \to 0 \quad a.s..
\]
Let 
$$\overline\xi_n:= n^{2\alpha+2}\sum\limits_{\bk\in\mathbb Z^3, |\bk|^2=n,k_3=0} \int_0^T \overline |U_{\bk}|^2(t) dt, \quad \overline b_n:= \sum\limits_{j=1}^n \mathbb E[\overline\xi_j].$$
Then using \eqref{order-Ukbar} yields 
\[
\overline b_n \sim \sum\limits_{\bk\in\mathbb Z^3, 1\leq |\bk|\leq \sqrt n, k_3=0}  |\bk|^{4\alpha+4} |\bk|^{-2\gamma-2} =\sum\limits_{\bk\in\mathbb Z^3, 1\leq |\bk|\leq \sqrt n, k_3=0} |\bk|^{4\alpha-2\gamma+2}.
\]
As $\alpha>\frac\gamma2-1$, we have $4\alpha-2\gamma+2>-2$ and $\overline{b}_n \sim n^{2+2\alpha-\gamma}$ due to Lemma~\ref{lemma:order}. Consequently,  $\lim\limits_{n\to \infty} \overline b_n=+\infty.$ 
Next, the estimates in \eqref{order-Var} imply 
\begin{align*}
    \sum\limits_{n=1}^\infty \frac{Var[\overline\xi_n]}{\overline b_n^2}&\sim \sum\limits_{n=1}^\infty\frac{n^{\frac12}n^{4\alpha+4} n^{-(2\gamma+3)}}{n^{4+4\alpha-2\gamma}}  \sim \sum\limits_{n=1}^\infty \frac1{n^\frac52} <\infty,
\end{align*}
where we have used the fact that $\Big|\{\bk\in\mathbb Z^3, k_3=0: |\bk|^2=n\}\Big| = \Big|\{\bk\in\mathbb Z^2: |\bk|^2=n\}\Big|\lesssim \sqrt n$. Therefore,  $\lim\limits_{N\to \infty}\overline I_1^N=1$ follows by applying Lemma~\ref{lemma:LLN}. 

For the residual part, the computation
\[
\overline I_2^N \leq C \frac{\int_0^T \|A^{1+\alpha}\overline R^N\|^2 dt}{N^{4+4\alpha-2\gamma}} \leq C \frac{N^{4(\alpha-\gamma') \int_0^T \|A^{1+\gamma'}\overline R^N\|^2 dt}}{N^{4+4\alpha-2\gamma}} \leq C \frac{\int_0^T \|A^{1+\gamma'}R\|^2 dt}{N^{4+4\gamma'-2\gamma}},
\]
and the fact $\gamma'>\frac\gamma2-1$ lead to  $\overline I_2^N\to 0$, as $N\to \infty.$

\noindent\underline{Proof of \eqref{ratio-6}.} The proof is similar to the one of \eqref{ratio-5}, and for the sake of brevity we omit it here.
\end{proof}

\begin{remark}\label{rmk:hor-ave-not-work}
    The results in \eqref{ratio-1}--\eqref{ratio-4} require that  $\max\left\{\frac54, \frac\gamma2-\frac54\right\}<\gamma'<\frac\gamma2-\frac34$ and $\alpha>\frac\gamma2-\frac54$, while \eqref{ratio-5}--\eqref{ratio-6} require stricter conditions $\max\left\{\frac54, \frac\gamma2-1\right\}<\gamma'<\frac\gamma2-\frac34$ and $\alpha>\frac\gamma2-1$. The main reason is that $V,U,\widetilde V,\widetilde U$ are three-dimensional functions and $R$ is $A^{\frac12}$ smoother than $V,U,\widetilde V,\widetilde U$, while $\overline{V},\overline{U},\widehat{V},\widehat{U}$ are essentially two-dimensional functions and $R$ is only $A^{\frac14}$ smoother than $V,U,\widetilde V,\widetilde U$. 

 This observation is also relevant to why one cannot employ the same strategy of taking the horizontal average and build estimators for $\nu_z$ as was done for $\nu_h$ through the solution to \eqref{PE-barotropic}. We point out that with $V_H$ being the horizontal average and $U_H$ its linear part, then both $V_H$ and $U_H$ are one-dimensional functions; that is, they lost two dimensions. Although the equation for $V_H$ remains self-contained and contains solely $\nu_z$, the estimates in Lemma~\ref{lemma:ratio-1} applied to $V_H^N$ and $U_H^N$ (the projections of $V_H$ and $U_H$ onto the first $N$ Fourier modes) will immediately break down. The limit $\lim\limits_{N\to \infty} \frac{\int_0^T \|A^{1+\alpha} V_H^N\|^2 dt}{\mathbb E\int_0^T \|A^{1+\alpha} U_H^N\|^2 dt} = 1$ will no longer hold, primarily because the condition $\max\left\{\frac54, \frac\gamma2-\frac34\right\}<\gamma'<\frac\gamma2-\frac34$ can never be satisfied. The fundamental reason behind this is that $R$ has exactly the same regularity as $U_H$; see Remark~\ref{rmk:linear-improve} and Remark~\ref{rmk:R-better-than-U}).

However, if the original system \eqref{PE-system} is in dimension two, then taking the horizontal average indeed works, as the reduction in dimensions is only by one.
\end{remark}

\section{Proof of the Main Theorems}\label{sec:proof}

In this section, we present the proofs of Theorem \ref{thm:consistency} and Theorem \ref{thm:normality}. To perform the analysis, we first establish the following representations for the proposed estimators $\nu_{h1}^N$, $\nu_{h2}^N$, $\nu_{h3}^N$, $\nu_{z1}^N$,$\nu_{z2}^N$, $\nu_{z3}^N$, and $\widehat{\nu_{z1}^N}$.

Using the dynamics \eqref{PE-original-finitemode} of $\overline V$,  we first derive that 
\begin{align}\label{nuh1N}
    \nu^N_{h1} &= -\frac{\int_0^T  \langle A_h^{1+\alpha} \overline{V}^N, d\overline{V}^N \rangle + \int_0^T  \langle A_h^{1+\alpha} \overline{V}^N , \mathcal P_h\overline{B_N(V,V)} \rangle dt}{\int_0^T \|A_h^{1+\frac{\alpha}{2}} \overline{V}^N\|^2 dt} = \nu_h - \frac{\int_0^T  \langle A_h^{1+\alpha} \overline{V}^N, P_N \overline{\sigma dW} \rangle  dt }{\int_0^T \|A_h^{1+\frac{\alpha}{2}} \overline{V}^N\|^2 dt} \nonumber
    \\
    &= \nu_h - \frac{\int_0^T  \langle A_h^{1+\alpha-\frac\gamma2} \overline{V}^N, \sigma_0\sum\limits_{k_3=0,1\leq|\bk|\leq N} c_{\bk} \phi_{\bk} dW_{\bk} \rangle }{\int_0^T \|A_h^{1+\frac{\alpha}{2}} \overline{V}^N\|^2 dt}.
\end{align}
Similarly, we get 
\begin{align*}
   \nu^N_{z1}=& -\frac1{\int_0^T \|A_z A^{\frac{\alpha}{2}} \widetilde{V}^N\|^2 dt}\Bigg(\int_0^T  \langle A_z A^{\alpha} \widetilde{V}^N, d\widetilde{V}^N \rangle +     \int_0^T \left\langle A_z A^{\alpha} \widetilde{V}^N , \widetilde{B_N(V,V)} \right\rangle dt
   \\
   &\hspace{2cm}-\frac{\int_0^T  \langle A_h^{1+\alpha} \overline{V}^N, d\overline{V}^N \rangle + \int_0^T  \langle A_h^{1+\alpha} \overline{V}^N , \mathcal P_h\overline{B_N(V,V)} \rangle dt}{\int_0^T \|A_h^{1+\frac{\alpha}{2}} \overline{V}^N\|^2 dt}\int_0^T \langle A_h A^{\alpha} \widetilde{V}^N,  A_z  \widetilde{V}^N \rangle dt\Bigg) 
   \\
   =& \nu_z -\frac{\int_0^T  \langle A_z A^{\alpha-\frac\gamma2} \widetilde{V}^N, \sigma_0\sum\limits_{k_3\neq0,1\leq|\bk|\leq N} c_{\bk} \phi_{\bk} dW_{\bk} \rangle}{\int_0^T \|A_z A^{\frac{\alpha}{2}} \widetilde{V}^N\|^2 dt}
   \\
   &+ \frac{\int_0^T  \langle A_h^{1+\alpha-\frac\gamma2} \overline{V}^N, \sigma_0\sum\limits_{k_3=0,1\leq|\bk|\leq N} c_{\bk} \phi_{\bk} dW_{\bk} \rangle \int_0^T \langle A_h A^{\alpha} \widetilde{V}^N,  A_z  \widetilde{V}^N \rangle dt}{\int_0^T \|A_h^{1+\frac{\alpha}{2}} \overline{V}^N\|^2 dt\int_0^T \|A_z A^{\frac{\alpha}{2}} \widetilde{V}^N\|^2 dt}.
\end{align*}

Likewise, for $\nu^N_{h2}$ and $\nu^N_{z2}$ we write
\begin{align*}
    \nu^N_{h2} = \nu^N_{h1} + \frac{\int_0^T  \langle A_h^{1+\alpha} \overline{V}^N , \overline{B_N(V,V)} -\overline{B_N(V^N,V^N)} \rangle  dt}{\int_0^T \|A_h^{1+\frac{\alpha}{2}} \overline{V}^N\|^2 dt}, 
\end{align*}
and 
\begin{align*}
   \nu^N_{z2}=& \nu^N_{z1} + \frac{\int_0^T  \langle A_z A^{\alpha} \widetilde{V}^N, \widetilde{B_N(V,V)} - \widetilde{B_N(V^N,V^N)} \rangle dt}{\int_0^T \|A_z A^{\frac{\alpha}{2}} \widetilde{V}^N\|^2 dt}
   \\
   &-\frac{\int_0^T  \langle A_h^{1+\alpha} \overline{V}^N, \overline{B_N(V,V)} - \overline{B_N(V^N,V^N)} \rangle dt \int_0^T \langle A_h A^{\alpha} \widetilde{V}^N,  A_z  \widetilde{V}^N \rangle dt}{\int_0^T \|A_h^{1+\frac{\alpha}{2}} \overline{V}^N\|^2 dt\int_0^T \|A_z A^{\frac{\alpha}{2}} \widetilde{V}^N\|^2 dt}.
\end{align*}
Moreover, for $\nu^N_{h3}$ and $\nu^N_{z3}$, we get 
\begin{align*}
    \nu^N_{h3} = \nu^N_{h1} + \frac{\int_0^T  \langle A_h^{1+\alpha} \overline{V}^N , \overline{B_N(V,V)}  \rangle dt }{\int_0^T \|A_h^{1+\frac{\alpha}{2}} \overline{V}^N\|^2 dt}, 
\end{align*}
and 
\begin{align*}
   \nu^N_{z3}= \nu^N_{z1} + \frac{\int_0^T  \langle A_z A^{\alpha} \widetilde{V}^N, \widetilde{B_N(V,V)}  \rangle dt}{\int_0^T \|A_z A^{\frac{\alpha}{2}} \widetilde{V}^N\|^2 dt}
   -\frac{\int_0^T  \langle A_h^{1+\alpha} \overline{V}^N, \overline{B_N(V,V)} \rangle dt \int_0^T \langle A_h A^{\alpha} \widetilde{V}^N,  A_z  \widetilde{V}^N \rangle dt}{\int_0^T \|A_h^{1+\frac{\alpha}{2}} \overline{V}^N\|^2 dt\int_0^T \|A_z A^{\frac{\alpha}{2}} \widetilde{V}^N\|^2 dt}.
\end{align*}

Regarding the representation of $\widehat{\nu^N_{z1}}$, from $\nu_{z1}^N$ we obtain 
\begin{align}\label{hatnuz1N}
     \widehat{\nu^N_{z1}}=& -\frac1{\int_0^T \|A_z A^{\frac{\alpha}{2}} \widehat{V}^N\|^2 dt}\Bigg(\int_0^T  \langle A_z A^{\alpha} \widehat{V}^N, d\widehat{V}^N \rangle +     \int_0^T \left\langle A_z A^{\alpha} \widehat{V}^N , \widehat{B_N(V,V)} \right\rangle dt \nonumber
   \\
   &\hspace{2cm}-\frac{\int_0^T  \langle A_h^{1+\alpha} \overline{V}^N, d\overline{V}^N \rangle + \int_0^T  \langle A_h^{1+\alpha} \overline{V}^N , \mathcal P_h\overline{B_N(V,V)} \rangle dt}{\int_0^T \|A_h^{1+\frac{\alpha}{2}} \overline{V}^N\|^2 dt}\int_0^T \langle A_h A^{\alpha} \widehat{V}^N,  A_z  \widehat{V}^N \rangle dt\Bigg) \nonumber
   \\
   =&\nu_z - \frac{\int_0^T  \langle A_z A^{\alpha-\frac\gamma2} \widehat{V}^N, \sigma_0\sum\limits_{1\leq|\bk|\leq N, |\bk'|=\sqrt{q}|k_3|} c_{\bk} \phi_{\bk} dW_{\bk} \rangle}{\int_0^T \|A_z A^{\frac{\alpha}{2}} \widehat{V}^N\|^2 dt} \nonumber
   \\
   &+ \frac{\int_0^T  \langle A_h^{1+\alpha-\frac\gamma2} \overline{V}^N, \sigma_0\sum\limits_{k_3=0,1\leq|\bk|\leq N} c_{\bk} \phi_{\bk} dW_{\bk} \rangle \int_0^T \langle A_h A^{\alpha} \widehat{V}^N,  A_z  \widehat{V}^N \rangle dt}{\int_0^T \|A_h^{1+\frac{\alpha}{2}} \overline{V}^N\|^2 dt\int_0^T \|A_z A^{\frac{\alpha}{2}} \widehat{V}^N\|^2 dt} \nonumber
   \\
   =&\nu_z - \frac{\int_0^T  \langle A_z A^{\alpha-\frac\gamma2} \widehat{V}^N, \sigma_0\sum\limits_{1\leq|\bk|\leq N, |\bk'|=\sqrt{q}|k_3|} c_{\bk} \phi_{\bk} dW_{\bk} \rangle}{\int_0^T \|A_z A^{\frac{\alpha}{2}} \widehat{V}^N\|^2 dt} + q\frac{\int_0^T  \langle A_h^{1+\alpha-\frac\gamma2} \overline{V}^N, \sigma_0\sum\limits_{k_3=0,1\leq|\bk|\leq N} c_{\bk} \phi_{\bk} dW_{\bk} \rangle}{\int_0^T \|A_h^{1+\frac{\alpha}{2}} \overline{V}^N\|^2 dt},
\end{align}
where we have used the fact that $\int_0^T \langle A_h A^{\alpha} \widehat{V}^N,  A_z  \widehat{V}^N \rangle dt = q\int_0^T \|A_z A^{\frac{\alpha}{2}} \widehat{V}^N\|^2 dt$ because $A_h \widehat V^N = q A_z \widehat V^N $.

\subsection{Consistency of the estimators}\label{sec:consistency}

It is enough to show that each stochastic term in the derived representations converges to zero. We  begin by examining the consistency of $\nu_{h1}^N$, $\nu_{z1}^N$, and $\widehat{\nu_{z1}^N}$. 
\begin{lemma}\label{lemma:stochastic-term}
Assume that $\gamma>4$. Suppose that $V$ is the solution to \eqref{PE-system} with an initial condition $V(0) = V_0 \in \mathcal D(A^{\frac12+\gamma'})$ for all $\max\left\{\frac54, \frac\gamma2-1\right\}<\gamma'<\frac\gamma2-\frac34$, and
  $U$ is the solution to \eqref{PE-original-linear} with $U(0)=0$. Then for any $\alpha > \gamma-2$, we have
  
 (i) For every $\overline\delta_1,\widehat \delta_1<\min\{4+2\alpha-2\gamma,\frac32\}$ and $\widetilde\delta_1<\min\{5+2\alpha-2\gamma,2\}$,
 \begin{equation}\label{linear-to-zero-1}
    \lim\limits_{N\to \infty} N^{\overline\delta_1} \frac{\int_0^T  \langle A_h^{1+\alpha-\frac\gamma2} \overline{U}^N, \sum\limits_{k_3=0,1\leq|\bk|\leq N} c_{\bk} \phi_{\bk} dW_{\bk} \rangle}{\int_0^T \|A_h^{1+\frac{\alpha}{2}} \overline{V}^N\|^2 dt} =0 \quad a.s.,
 \end{equation}
 \begin{equation}\label{linear-to-zero-2}
    \lim\limits_{N\to \infty} N^{\widehat\delta_1} \frac{\int_0^T  \langle A_z A^{\alpha-\frac\gamma2} \widehat{U}^N, \sum\limits_{1\leq|\bk|\leq N, |\bk'|=\sqrt{q}|k_3|} c_{\bk} \phi_{\bk} dW_{\bk} \rangle}{\int_0^T \|A_z A^{\frac{\alpha}{2}} \widehat{V}^N\|^2 dt} =0 \quad a.s.,
 \end{equation}
 and
 \begin{equation}\label{linear-to-zero-3}
    \lim\limits_{N\to \infty} N^{\widetilde\delta_1} \frac{\int_0^T  \langle A_z A^{\alpha-\frac\gamma2} \widetilde{U}^N, \sum\limits_{k_3\neq 0,1\leq|\bk|\leq N} c_{\bk} \phi_{\bk} dW_{\bk} \rangle}{\int_0^T \|A_z A^{\frac{\alpha}{2}} \widetilde{V}^N\|^2 dt} =0 \quad a.s..
 \end{equation}
 
 (ii) For every $\overline\delta_2,\widehat\delta_2<\min\{4+2\alpha-2\gamma,\frac52 \}$ and $\widetilde\delta_2<\min\{5+2\alpha-2\gamma,\frac72 \}$,
 \begin{equation}\label{residual-to-zero-1}
    \lim\limits_{N\to \infty} N^{\overline\delta_2} \frac{\int_0^T  \langle A_h^{1+\alpha-\frac\gamma2} \overline{R}^N, \sum\limits_{k_3=0,1\leq|\bk|\leq N} c_{\bk} \phi_{\bk} dW_{\bk} \rangle}{\int_0^T \|A_h^{1+\frac{\alpha}{2}} \overline{V}^N\|^2 dt} =0,
 \end{equation}
  \begin{equation}\label{residual-to-zero-2}
    \lim\limits_{N\to \infty} N^{\widehat\delta_2} \frac{\int_0^T  \langle A_z A^{\alpha-\frac\gamma2} \widehat{R}^N, \sum\limits_{1\leq|\bk|\leq N, |\bk'|=\sqrt{q}|k_3|} c_{\bk} \phi_{\bk} dW_{\bk} \rangle}{\int_0^T \|A_z A^{\frac{\alpha}{2}} \widehat{V}^N\|^2 dt} =0 ,
 \end{equation}
 and
 \begin{equation}\label{residual-to-zero-3}
    \lim\limits_{N\to \infty} N^{\widetilde\delta_2} \frac{\int_0^T  \langle A_z A^{\alpha-\frac\gamma2} \widetilde{R}^N, \sum\limits_{k_3\neq 0,1\leq|\bk|\leq N} c_{\bk} \phi_{\bk} dW_{\bk} \rangle}{\int_0^T \|A_z A^{\frac{\alpha}{2}} \widetilde{V}^N\|^2 dt} =0, 
 \end{equation}
in probability.
\end{lemma}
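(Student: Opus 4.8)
The plan is, in each of the six displays, to reduce to showing that a normalized stochastic integral tends to $0$, treating part~(i) (the linear terms, built from $U$) by a strong law of large numbers across frequencies and part~(ii) (the residual terms, built from $R$) by a localization plus second-moment argument.

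\textbf{Denominators.} Every denominator is one of $\int_0^T\|A_h^{1+\alpha/2}\overline V^N\|^2dt$, $\int_0^T\|A_zA^{\alpha/2}\widehat V^N\|^2dt$, $\int_0^T\|A_zA^{\alpha/2}\widetilde V^N\|^2dt$. On barotropic modes $A_h=A$, so $\|A_h^{1+\alpha/2}\overline V^N\|=\|A^{1+\alpha/2}\overline V^N\|$; on \texttt{hat} modes $A_h\widehat V^N=qA_z\widehat V^N$, so $A_zA^{\alpha/2}\widehat V^N=(q+1)^{-1}A^{1+\alpha/2}\widehat V^N$. Hence Lemma~\ref{lemma:ratio-1} (used with free parameter $\alpha/2$, admissible because $\alpha>\gamma-2$) together with Lemma~\ref{lemma:order-linear} shows that, almost surely, these three denominators are $\sim N^{4+2\alpha-2\gamma}$, $\sim N^{4+2\alpha-2\gamma}$ and $\sim N^{5+2\alpha-2\gamma}$, with all exponents strictly positive. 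It therefore suffices to control each numerator after dividing by $N^{4+2\alpha-2\gamma}$ (resp. $N^{5+2\alpha-2\gamma}$).

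\textbf{Part (i).} Writing $m_{\bk}$ for the Fourier multiplier of the operator applied to $U^N$ (so $m_{\bk}=|\bk|^{2+2\alpha-\gamma}$ in the \texttt{bar} case and $m_{\bk}=|k_3|^2|\bk|^{2\alpha-\gamma}$ in the \texttt{hat} and \texttt{tilde} cases), the numerator equals $\sum_{\bk}m_{\bk}\int_0^T (U_{\bk}\cdot c_{\bk})\,dW_{\bk}$ over the relevant mode set with $|\bk|\le N$; since $U(0)=0$, each $U_{\bk}$ is the explicit Ornstein--Uhlenbeck process \eqref{eqn:Ukbar}--\eqref{eqn:Uktilde} driven by $W_{\bk}$ alone, so these summands are centered and, once the reality constraint $\ba_{-k_1,-k_2,k_3}=\ba_{k_1,k_2,k_3}^{*}$ is taken into account, independent across $\bk$. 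Grouping by $n=|\bk|^2$, set $\xi_n:=\sum_{|\bk|^2=n}m_{\bk}\int_0^T(U_{\bk}\cdot c_{\bk})\,dW_{\bk}$. By It\^o's isometry, the second-moment bounds \eqref{order-Ukbar}, \eqref{order-Uktilde}, \eqref{order-Ukhat}, and the lattice counts of Lemma~\ref{counting} ($\lesssim\sqrt n$ points for the essentially two-dimensional \texttt{bar}/\texttt{hat} sets, $\lesssim n$ for the three-dimensional \texttt{tilde} set), one obtains $\Var[\xi_n]\lesssim n^{3/2+2\alpha-2\gamma}$ in the \texttt{bar}/\texttt{hat} cases and $\Var[\xi_n]\lesssim n^{2+2\alpha-2\gamma}$ in the \texttt{tilde} case. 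Taking $b_n:=n^{(4+2\alpha-2\gamma-\delta)/2}$ (resp. $n^{(5+2\alpha-2\gamma-\delta)/2}$), which is increasing and diverges precisely because $\delta<4+2\alpha-2\gamma$ (resp. $\delta<5+2\alpha-2\gamma$), gives $\sum_n\Var[\xi_n]/b_n^2\lesssim\sum_n n^{\delta-5/2}$ (resp. $\sum_n n^{\delta-3}$), finite exactly when $\delta<3/2$ (resp. $\delta<2$); these are the conditions on $\overline\delta_1,\widehat\delta_1,\widetilde\delta_1$. Lemma~\ref{lemma:LLN} then gives $b_n^{-1}\sum_{j\le n}\xi_j\to0$ a.s.; specializing to $n=N^2$ and dividing by the denominator orders established above yields \eqref{linear-to-zero-1}--\eqref{linear-to-zero-3}.

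\textbf{Part (ii).} Now the integrand is built from $R$ (the solution of \eqref{eqn:R}), an adapted process rather than a function of one $W_{\bk}$, so the SLLN is unavailable; instead I use that, by Lemma~\ref{lemma:regularity-R}, $R$ is half a derivative smoother than $U$. Fix the stopping times $\tau_n\uparrow\infty$ of Lemma~\ref{lemma:regularity-R} and a $\gamma'$ close to $\tfrac\gamma2-\tfrac34$. For the numerator stopped at $\tau_n$, It\^o's isometry bounds its second moment by $\mathbb E\int_0^{\tau_n}\|\Phi^N\|^2\,ds$, where $\Phi^N$ is the operator applied to $\overline R^N$, $\widehat R^N$ or $\widetilde R^N$; since in all three cases $|m_{\bk}|\le|\bk|^{2+2\alpha-\gamma}$, the inverse interpolation $|\bk|^{2+2\alpha-\gamma}\le N^{(2+2\alpha-\gamma)-2(\gamma'+1)}|\bk|^{2(\gamma'+1)}$ valid for $|\bk|\le N$ gives $\|\Phi^N\|^2\le N^{(4\alpha-2\gamma-4\gamma')^{+}}\|A^{\gamma'+1}R\|^2$, hence $\mathbb E[(\text{stopped numerator})^2]\le N^{(4\alpha-2\gamma-4\gamma')^{+}}C_n$ with $C_n:=\mathbb E\int_0^{\tau_n}\|A^{\gamma'+1}R\|^2\,ds<\infty$ by \eqref{regularity:R-4}. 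Given $\varepsilon,\eta>0$, pick $n$ with $\mathbb P(\tau_n<T)<\varepsilon$ and, using the a.s.\ denominator asymptotics, an event of probability $>1-\varepsilon$ on which the denominator exceeds $cN^{4+2\alpha-2\gamma}$ (resp. $cN^{5+2\alpha-2\gamma}$); on the intersection, Chebyshev's inequality bounds the probability that $N^{\delta}\,|\text{numerator}|/|\text{denominator}|>\eta$ by a constant times $N^{2\delta+(4\alpha-2\gamma-4\gamma')^{+}-2(4+2\alpha-2\gamma)}$ (resp. with $-2(5+2\alpha-2\gamma)$). Letting $\gamma'\uparrow\tfrac\gamma2-\tfrac34$, the exponent approaches $2\delta+(4\alpha-4\gamma+3)^{+}-(8+4\alpha-4\gamma)$ (resp. $-(10+4\alpha-4\gamma)$): if $\alpha\le\gamma-\tfrac34$ the positive part vanishes and the binding condition is $\delta<4+2\alpha-2\gamma\ (\le\tfrac52)$ (resp. $\le\tfrac72$), whereas if $\alpha>\gamma-\tfrac34$ it is $\delta<\tfrac52$ (resp. $\tfrac72$); in either case this is $\delta<\min\{4+2\alpha-2\gamma,\tfrac52\}$ (resp. $\delta<\min\{5+2\alpha-2\gamma,\tfrac72\}$). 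As $\varepsilon,\eta$ are arbitrary, this gives convergence in probability, i.e. \eqref{residual-to-zero-1}--\eqref{residual-to-zero-3}.

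\textbf{Main obstacle.} The scheme is conceptually routine, and the genuine work is the exponent bookkeeping needed to obtain exactly the stated ranges. In part~(i) one must carry through how the two- versus three-dimensionality of each mode set (encoded by $r_2(n)\lesssim\sqrt n$ versus $r_3(n)\lesssim n$) enters simultaneously $\Var[\xi_n]$ and the denominator order, so that a single normalization $b_n$ balances both. In part~(ii) the crux is that the half-derivative smoothing of $R$ over $U$ competes against the power of $N$ lost to the inverse interpolation, and the crossover of these two effects at $\alpha=\gamma-\tfrac34$ is exactly what produces the $\min$ in each admissible interval; this is also why only convergence in probability — not almost sure convergence — is available there, since $\mathbb E\int_0^T\|A^{\gamma'+1}R\|^2\,ds$ need not be finite and one can only use the localized bound \eqref{regularity:R-4}. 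A minor but genuinely needed point throughout is to make the conjugate-symmetry indexing of the Fourier modes explicit, so that the frequency-indexed summands in part~(i) are honestly independent.
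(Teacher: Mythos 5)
Your proof is correct and, for part (i), essentially identical to the paper's: the same reduction of the denominators via Lemma~\ref{lemma:ratio-1} and Lemma~\ref{lemma:order-linear}, the same shell sums $\xi_n$ over $|\bk|^2=n$, the same variance bounds via It\^o's isometry, \eqref{order-Ukbar}--\eqref{order-Ukhat} and the lattice counts, and the same application of Lemma~\ref{lemma:LLN}(i), yielding exactly the thresholds $\tfrac32$ and $2$. For part (ii) you take a mildly different route: where the paper again sums over spherical shells and invokes the uncorrelated-variable LLN (Lemma~\ref{lemma:LLN}(ii)), exploiting that $\sum_n \Var[\overline\zeta^\tau_n]/\overline c_n^2$ telescopes exactly into $\mathbb E\int_0^\tau\|A^{\frac\gamma2+\frac{\overline\delta_2}{2}-1}R\|^2\,dt$, you bound the second moment of the whole stopped stochastic integral at once and apply Chebyshev, trading the regularity of $R$ for a power of $N$ via the inverse interpolation $|\bk|^{2+2\alpha-\gamma}\le N^{(2+2\alpha-\gamma-2\gamma'-2)^+}|\bk|^{2\gamma'+2}$. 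The two arguments rest on the same inputs (It\^o isometry, the localization $\tau_m$, and \eqref{regularity:R-4}) and, as your case analysis at $\alpha=\gamma-\tfrac34$ with $\gamma'\uparrow\tfrac\gamma2-\tfrac34$ shows, recover the same admissible ranges $\min\{4+2\alpha-2\gamma,\tfrac52\}$ and $\min\{5+2\alpha-2\gamma,\tfrac72\}$; the paper's shell-wise formulation is slightly cleaner because the $\min$ emerges from two separate conditions ($c_n\uparrow\infty$ versus finiteness of the Sobolev norm of $R$) without any case split, but nothing of substance is gained or lost either way.
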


\begin{proof}
Since $\frac\alpha2>\frac\gamma2-1$, thanks to Lemma \ref{lemma:ratio-1} and Lemma \ref{lemma:order-linear}, we deduce that as $N\to\infty$,
\begin{align*}
    &\int_0^T \|A_h^{1+\frac{\alpha}{2}} \overline{V}^N\|^2 dt \sim \mathbb E \int_0^T \|A_h^{1+\frac{\alpha}{2}} \overline{U}^N\|^2 dt \sim N^{4+2\alpha-2\gamma},
    \\
    &\int_0^T \|A_z A^{\frac{\alpha}{2}} \widehat{V}^N\|^2 dt \sim \mathbb E \int_0^T \|A_z A^{\frac{\alpha}{2}} \widehat{U}^N\|^2 dt \sim N^{4+2\alpha-2\gamma},
    \\
    &\int_0^T \|A_z A^{\frac{\alpha}{2}} \widetilde{V}^N\|^2 dt \sim \mathbb E\int_0^T \|A_z A^{\frac{\alpha}{2}} \widetilde{U}^N\|^2 dt \sim N^{5+2\alpha-2\gamma}.
\end{align*}
Therefore, it suffices to show that each of the next terms converges to zero, as $N\to\infty$:
\begin{align*}
    &\overline{J}_N^1= \frac{\int_0^T  \langle A_h^{1+\alpha-\frac\gamma2} \overline{U}^N, \sum\limits_{k_3=0,1\leq|\bk|\leq N} c_{\bk} \phi_{\bk} dW_{\bk} \rangle}{N^{4+2\alpha-2\gamma-\overline\delta_1}}= \frac{\sum\limits_{k_3=0,1\leq|\bk|\leq N}|\bk|^{2+2\alpha-\gamma}\int_0^T \overline{U}_{\bk}\cdot c_{\bk} dW_{\bk}}{N^{4+2\alpha-2\gamma-\overline\delta_1}},
    \\
    &\widehat{J}_N^1= \frac{\int_0^T  \langle A_z A^{\alpha-\frac\gamma2} \widehat{U}^N, \sum\limits_{1\leq|\bk|\leq N,|\bk'|=\sqrt{q}|k_3|} c_{\bk} \phi_{\bk} dW_{\bk} \rangle}{N^{4+2\alpha-2\gamma-\widehat\delta_1}}= \frac{\sum\limits_{1\leq|\bk|\leq N,|\bk'|=\sqrt{q}|k_3|}\frac12|\bk|^{2+2\alpha-\gamma}\int_0^T \widehat{U}_{\bk}\cdot c_{\bk} dW_{\bk}}{N^{4+2\alpha-2\gamma-\widehat\delta_1}},
    \\
    &\widetilde{J}_N^1= \frac{\int_0^T  \langle A_z A^{\alpha-\frac\gamma2} \widetilde{U}^N, \sum\limits_{k_3\neq 0,1\leq|\bk|\leq N} c_{\bk} \phi_{\bk} dW_{\bk} \rangle}{N^{5+2\alpha-2\gamma-\widetilde\delta_1}}= \frac{\sum\limits_{k_3\neq 0,1\leq|\bk|\leq N}|k_3|^2|\bk|^{2\alpha-\gamma}\int_0^T \widetilde{U}_{\bk}\cdot c_{\bk} dW_{\bk}}{N^{5+2\alpha-2\gamma-\widetilde\delta_1}},
\end{align*}
and
\begin{align*}
    &\overline{J}_N^2 = \frac{\int_0^T  \langle A_h^{1+\alpha-\frac\gamma2} \overline{R}^N, \sum\limits_{k_3=0,1\leq|\bk|\leq N} c_{\bk} \phi_{\bk} dW_{\bk} \rangle}{N^{4+2\alpha-2\gamma-\overline\delta_2}},
    \\
    &\widehat{J}_N^2 = \frac{\int_0^T  \langle A_z A^{\alpha-\frac\gamma2} \widehat{R}^N, \sum\limits_{1\leq|\bk|\leq N, |\bk'|=\sqrt{q}|k_3|} c_{\bk} \phi_{\bk} dW_{\bk} \rangle}{N^{4+2\alpha-2\gamma-\widehat\delta_2}},
    \\
    &\widetilde{J}_N^2= \frac{\int_0^T  \langle A_z A^{\alpha-\frac\gamma2} \widetilde{R}^N, \sum\limits_{k_3\neq 0,1\leq|\bk|\leq N} c_{\bk} \phi_{\bk} dW_{\bk} \rangle}{N^{5+2\alpha-2\gamma-\widetilde\delta_2}}
\end{align*}
 We will establish these limits one by one. 

For $\overline{J}_N^1$, we define $\overline{\xi}_n= n^{1+\alpha-\frac\gamma2} \sum\limits_{|\bk|^2=n,k_3=0} \int_0^T \overline{U}_{\bk}\cdot c_{\bk} dW_{\bk}$ and $\overline{b}_n=n^{2+\alpha-\gamma-\frac{\overline\delta_1}2}$. Note that under the condition on $\overline{\delta}_1$ we have $\lim\limits_{n\to\infty} \overline{b}_n = \infty.$ Using the It\^{o}'s isometry and \eqref{order-Ukbar}, since $\mathbb E[\overline{\xi}_n]=0$, we obtain
\begin{align*}
    \Var[\overline{\xi}_n]  =  \mathbb E[\overline{\xi}_n^2] = n^{2+2\alpha-\gamma}  \sum\limits_{|\bk|^2=n,k_3=0} \mathbb E \int_0^T |\overline{U}_{\bk}|^2 dt \lesssim n^{2+2\alpha-\gamma} \sqrt{n} n^{-\gamma-1} \sim n^{\frac32+2\alpha-2\gamma},
\end{align*}
where we have also used the fact that $\Big|\{\bk\in\mathbb Z^3, k_3=0: |\bk|^2=n\}\Big| = \Big|\{\bk\in\mathbb Z^2: |\bk|^2=n\}\Big|\lesssim \sqrt n.$
Therefore, in the regime  $\overline{\delta}_1<\frac32$, we have
\begin{align*}
    \sum\limits_{n=1}^\infty \frac{\Var[\overline{\xi}_n]}{\overline{b}_n^2} \lesssim \sum\limits_{n=1}^\infty \frac{n^{\frac32+2\alpha-2\gamma}}{n^{4+2\alpha-2\gamma-\overline{\delta}_1}} =  \sum\limits_{n=1}^\infty \frac{1}{n^{\frac52-\overline{\delta}_1}}<\infty.
\end{align*}
By invoking Lemma \ref{lemma:LLN}, we conclude that $\lim\limits_{N\to\infty} \overline{J}_N^1 = 0$ a.s..

The proof of $\widehat{J}_N^1$ follows by a similar approach, given the fact that  $$\Big|\{\bk\in\mathbb Z^3: |\bk|^2=n, |\bk'|=\sqrt{q}|k_3|\}\Big| = 2 \Big|\{\bk'\in\mathbb Z^2: |\bk'|^2=\frac{q}{q+1}n\}\Big|\lesssim \sqrt n.$$

For $\widetilde J_N^1$, 
we put $\widetilde{\xi}_n= |k_3|^{2} |\bk|^{2\alpha-\gamma} \sum\limits_{|\bk|^2=n,k_3\neq 0} \int_0^T \widetilde{U}_{\bk}\cdot c_{\bk} dW_{\bk}$ and $\widetilde{b}_n=n^{\frac52+\alpha-\gamma-\frac{\widetilde\delta_1}2}$. Again, under the condition on $\widetilde{\delta}_1$, we get $\lim\limits_{n\to\infty} \widetilde{b}_n = \infty.$  The It\^{o}'s isometry, estimate \eqref{order-Uktilde}, and the fact that $\mathbb E[\widetilde{\xi}_n]=0$ together imply 
\begin{align*}
    \Var[\widetilde{\xi}_n]  =  \mathbb E[\widetilde{\xi}_n^2] \leq  n^{2+2\alpha-\gamma}  \sum\limits_{|\bk|^2=n,k_3\neq 0} \mathbb E \int_0^T |\widetilde{U}_{\bk}|^2 dt \sim n^{2+2\alpha-\gamma} n n^{-\gamma-1} \sim n^{2+2\alpha-2\gamma},
\end{align*}
where once again, we have utilized Lemma \ref{counting}.
Thus, when $\widetilde{\delta}_1<2$, we have
\begin{align*}
    \sum\limits_{n=1}^\infty \frac{\Var[\widetilde{\xi}_n]}{\widetilde{b}_n^2} \lesssim \sum\limits_{n=1}^\infty \frac{n^{2+2\alpha-2\gamma}}{n^{5+2\alpha-2\gamma-\widetilde{\delta}_1}} =  \sum\limits_{n=1}^\infty \frac{1}{n^{3-\widetilde{\delta}_1}}<\infty,
\end{align*}
and conclude, invoking Lemma~\ref{lemma:LLN}, that $\lim\limits_{N\to\infty} \widetilde{J}_N^1 = 0$ with probability one. 

As for $\overline{J}_N^2$, let $\overline{r}_{\bk} = \langle \overline{R},\phi_{\bk}\rangle$, and we define, for any stopping time $\tau$,
\[
\overline{\zeta}_n^\tau= n^{1+\alpha-\frac\gamma2} \sum\limits_{|\bk|^2=n, k_3=0} \int_0^{\tau}\overline{r}_{\bk} dW_{\bk},
\]
and the sequence $\overline{c}_n= n^{2+\alpha-\gamma-\frac{\overline{\delta}_2}2}$. With the given condition on $\overline{\delta}_2$, it is clear that $\overline c_n$ is increasing and $\lim\limits_{n\to\infty}\overline c_n = \infty$. Consequently, we can infer, for any stopping time $\tau$ that satisfies $\Var[\overline\zeta_n^{\tau}]<\infty$,
\begin{align*}
    &\sum\limits_{n=1}^\infty \frac{\Var[\overline\zeta_n^{\tau}]}{\overline{c}_n^2} = \sum\limits_{n=1}^\infty \frac{n^{2+2\alpha-\gamma}}{n^{4+2\alpha-2\gamma-\overline{\delta}_2}} \sum\limits_{|\bk|^2=n, k_3=0} \mathbb E \int_0^{\tau}|\overline{r}_{\bk}|^2 dt 
    \\
    = &\sum\limits_{n=1}^\infty  \sum\limits_{|\bk|^2=n, k_3=0} |\bk|^{2\gamma+2\overline{\delta}_2-4} \mathbb E \int_0^{\tau}|\overline{r}_{\bk}|^2 dt = \mathbb E \int_0^{\tau} \|A^{\frac\gamma2+\frac{\overline{\delta}_2}2 -1} \overline{R}\|^2 dt \leq \mathbb E \int_0^{\tau} \|A^{\frac\gamma2+\frac{\overline{\delta}_2}2 -1} R\|^2 dt.
\end{align*}
Using the result in \eqref{regularity:R-4} and taking $\tau_m$ as in Lemma \ref{lemma:regularity-R}, under the  condition satisfied by $\overline{\delta}_2$, we get that 
\[
\sum\limits_{n=1}^\infty  \frac{\Var[\overline\zeta_k^{T\wedge \tau_m}]}{\overline{c}_n^2} <\infty.
\]
By applying Lemma \ref{lemma:LLN}, since $\{\overline\zeta_{n}^{\tau}\}_{n\in\mathbb N}$ are uncorrelated,  for each $m\in\mathbb N$, we establish that 
\[
 \lim\limits_{N\to\infty} \frac{\int_0^{T\wedge \tau_m}  \langle A_h^{1+\alpha-\frac\gamma2} \overline{R}^N, \sum\limits_{k_3=0,1\leq|\bk|\leq N} c_{\bk} \phi_{\bk} dW_{\bk} \rangle}{N^{4+2\alpha-2\gamma-\overline\delta_2}}=0 \quad \text{in probability}. 
\]
As $\tau_m \nearrow \infty$ and  $\mathbb{P}(\cup_m \{\tau_m >T\}) = 1$, we conclude that $\lim\limits_{N\to \infty} \overline{J}_N^2 = 0$ in probability. 

The proof of $\widehat{J}_N^2$ follows by similar arguments. Moving on to $\widetilde{J}_N^2$, we let $\widetilde{r}_{\bk} = \langle \widetilde{R},\phi_{\bk}\rangle$ and define, for any stopping time $\tau$,
\[
\widetilde{\zeta}_n^\tau= \sum\limits_{|\bk|^2=n, k_3\neq 0}  |k_3|^2 |\bk|^{2\alpha-\gamma} \int_0^{\tau}\widetilde{r}_{\bk} dW_{\bk}.
\]
and also consider the sequence $\widetilde{c}_n= n^{\frac52+\alpha-\gamma-\frac{\widetilde{\delta}_2}2}$. Again, due to the imposed condition on $\widetilde{\delta}_2$, we know that $\widetilde c_n$ is increasing and $\lim\limits_{n\to\infty}\widetilde c_n = \infty$; and up to any stopping time $\tau$ such that $\Var[\widetilde\zeta_n^{\tau}]<\infty$, one has
\begin{align*}
    &\sum\limits_{n=1}^\infty \frac{\Var[\widetilde\zeta_n^{\tau}]}{\widetilde{c}_n^2} \leq \sum\limits_{n=1}^\infty \frac{n^{2+2\alpha-\gamma}}{n^{5+2\alpha-2\gamma-\widetilde{\delta}_2}} \sum\limits_{|\bk|^2=n, k_3\neq 0} \mathbb E \int_0^{\tau}|\widetilde{r}_{\bk}|^2 dt 
    \\
    = &\sum\limits_{n=1}^\infty  \sum\limits_{|\bk|^2=n, k_3\neq 0} |\bk|^{2\gamma+2\widetilde{\delta}_2-6} \mathbb E \int_0^{\tau}|\widetilde{r}_{\bk}|^2 dt = \mathbb E \int_0^{\tau} \|A^{\frac\gamma2+\frac{\widetilde{\delta}_2}2 -\frac32} \widetilde{R}\|^2 dt \leq \mathbb E \int_0^{\tau} \|A^{\frac\gamma2+\frac{\widetilde{\delta}_2}2 -\frac32} R\|^2 dt.
\end{align*}
Following the same reasoning as applied to $\overline{J}_N^2$, and taking into account the condition imposed on $\widetilde\delta_2$, we can assert that $\sum\limits_{n=1}^\infty  \frac{\Var[\zeta_k^{T\wedge \tau_m}]}{\widetilde{c}_n^2} <\infty$ holds for any $\tau_m$ defined as per Lemma~\ref{lemma:regularity-R}. Consequently, we arrive at the conclusion that  $\lim\limits_{N\to \infty} \widetilde{J}_N^2 = 0$ in probability.
\end{proof}

Now we are ready to prove the first part of Theorem~\ref{thm:consistency}.

\begin{proposition}\label{prop:consistent-1}
For $\gamma>4$ and $\alpha>\gamma-2$, $\nu^N_{h1}$, $\nu^N_{z1}$, and $\widehat{\nu^N_{z1}}$ are weakly consistent estimators of the true parameters $\nu_h$ and $\nu_z$, respectively.
\end{proposition}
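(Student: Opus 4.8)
The plan is to use the explicit representations of the estimators established at the start of Section~\ref{sec:proof} and to show that every stochastic correction term appearing there converges to zero in probability as $N\to\infty$. In each case the correction is a ratio whose numerator is a stochastic integral against the noise and whose denominator is one of the quantities controlled by Lemmas~\ref{lemma:ratio-1} and~\ref{lemma:order-linear}; writing $V=U+R$ and splitting the numerator accordingly reduces everything to Lemma~\ref{lemma:stochastic-term}. Throughout, the standing hypotheses $\gamma>4$ and $\alpha>\gamma-2$ enter only through the inequality $4+2\alpha-2\gamma>0$ (which also gives $5+2\alpha-2\gamma>0$).

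\textbf{The estimator $\nu^N_{h1}$.} By \eqref{nuh1N}, $\nu^N_{h1}-\nu_h=-\mathcal E_N$, where
\[
\mathcal E_N:=\frac{\int_0^T \big\langle A_h^{1+\alpha-\frac\gamma2}\overline{V}^N,\ \sigma_0\sum_{k_3=0,1\le|\bk|\le N} c_{\bk}\phi_{\bk}\,dW_{\bk}\big\rangle}{\int_0^T \|A_h^{1+\frac\alpha2}\overline{V}^N\|^2\,dt}.
\]
Since $\overline{V}^N=\overline{U}^N+\overline{R}^N$, $\mathcal E_N$ is $\sigma_0$ times the sum of the two ratios appearing in \eqref{linear-to-zero-1} and \eqref{residual-to-zero-1}. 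As $4+2\alpha-2\gamma>0$, one may pick $\overline\delta_1,\overline\delta_2>0$ within the admissible ranges of Lemma~\ref{lemma:stochastic-term}; then $N^{\overline\delta_1}$ times the first ratio tends to $0$ a.s., and $N^{\overline\delta_2}$ times the second tends to $0$ in probability, while $N^{\overline\delta_i}\to\infty$. Hence $\mathcal E_N\to0$ in probability and $\nu^N_{h1}\to\nu_h$ in probability.

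\textbf{The estimators $\widehat{\nu^N_{z1}}$ and $\nu^N_{z1}$.} From \eqref{hatnuz1N}, $\widehat{\nu^N_{z1}}-\nu_z$ equals $q\mathcal E_N$ minus $\sigma_0$ times the stochastic ratio built from $\widehat V^N$; the first summand vanishes by the previous step, and for the second we split $\widehat V^N=\widehat U^N+\widehat R^N$ and apply \eqref{linear-to-zero-2}, \eqref{residual-to-zero-2} with positive exponents $\widehat\delta_1,\widehat\delta_2$. For $\nu^N_{z1}$, its representation gives $\nu^N_{z1}-\nu_z=-\widetilde{\mathcal E}_N+\mathcal E_N\rho_N$, where $\widetilde{\mathcal E}_N$ is the stochastic ratio built from $\widetilde V^N$ and
\[
\rho_N:=\frac{\int_0^T \langle A_h A^\alpha\widetilde V^N,\ A_z\widetilde V^N\rangle\,dt}{\int_0^T \|A_z A^{\frac\alpha2}\widetilde V^N\|^2\,dt}.
\]
Splitting $\widetilde V^N=\widetilde U^N+\widetilde R^N$ and invoking \eqref{linear-to-zero-3}, \eqref{residual-to-zero-3} shows $\widetilde{\mathcal E}_N\to0$ in probability. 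Finally, since $A_h,A_z,A$ are commuting Fourier multipliers, $\langle A_h A^\alpha f,A_z f\rangle=\|A_h^{\frac12}A_z^{\frac12}A^{\frac\alpha2}f\|^2$, so by \eqref{ratio-3}, \eqref{ratio-4} and Lemma~\ref{lemma:order-linear} both numerator and denominator of $\rho_N$ are a.s.\ asymptotically equivalent to $\mathbb E\int_0^T \|A_z A^{\frac\alpha2}\widetilde U^N\|^2\,dt\sim N^{5+2\alpha-2\gamma}$; hence $\rho_N$ is a.s.\ bounded and $\mathcal E_N\rho_N\to0$ in probability. Therefore $\widehat{\nu^N_{z1}},\nu^N_{z1}\to\nu_z$ in probability.

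\textbf{Main difficulty.} There is essentially no remaining obstacle: the substantive work was already carried out in Lemmas~\ref{lemma:ratio-1} and~\ref{lemma:stochastic-term}. The points needing attention are purely bookkeeping — verifying that $\gamma>4$, $\alpha>\gamma-2$ place all the exponents $\overline\delta_i,\widehat\delta_i,\widetilde\delta_i$ inside the ranges permitted by Lemma~\ref{lemma:stochastic-term} — together with the elementary identity $\langle A_h A^\alpha f,A_z f\rangle=\|A_h^{\frac12}A_z^{\frac12}A^{\frac\alpha2}f\|^2$ used to show $\rho_N$ is bounded in the treatment of $\nu^N_{z1}$.
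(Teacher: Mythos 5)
Your proposal is correct and follows essentially the same route as the paper: it rests on the representations \eqref{nuh1N}--\eqref{hatnuz1N}, kills each stochastic ratio via Lemma~\ref{lemma:stochastic-term} (the paper takes the $\delta$'s equal to $0$ where you take them small and positive, which is equivalent), and bounds the cross-term factor $\rho_N$ by combining \eqref{ratio-3}, \eqref{ratio-4} with Lemma~\ref{lemma:order-linear}, exactly as in the paper's appeal to $\int_0^T\langle A_hA^\alpha\widetilde V^N, A_z\widetilde V^N\rangle\,dt\big/\int_0^T\|A_zA^{\alpha/2}\widetilde V^N\|^2\,dt=\mathcal O(1)$. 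No gaps.
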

\begin{proof}
By taking $\overline\delta_1=\overline\delta_2 = \widehat\delta_1 = \widehat\delta_2 = \widetilde\delta_1 = \widetilde\delta_2=0$ in Lemma \ref{lemma:stochastic-term}, we get
\[
 \lim\limits_{N\to \infty}  \frac{\int_0^T  \langle A_h^{1+\alpha-\frac\gamma2} \overline{V}^N, \sum\limits_{k_3=0,1\leq|\bk|\leq N} c_{\bk} \phi_{\bk} dW_{\bk} \rangle}{\int_0^T \|A_h^{1+\frac{\alpha}{2}} \overline{V}^N\|^2 dt} =  0, \;
\lim\limits_{N\to \infty} \frac{\int_0^T  \langle A_z A^{\alpha-\frac\gamma2} \widehat{V}^N, \sum\limits_{1\leq|\bk|\leq N,|\bk'|=\sqrt{q}|k_3|} c_{\bk} \phi_{\bk} dW_{\bk} \rangle}{\int_0^T \|A_z A^{\frac{\alpha}{2}} \widehat{V}^N\|^2 dt}  =0,
\]
and 
\[
\lim\limits_{N\to \infty} \frac{\int_0^T  \langle A_z A^{\alpha-\frac\gamma2} \widetilde{V}^N, \sum\limits_{k_3\neq0,1\leq|\bk|\leq N} c_{\bk} \phi_{\bk} dW_{\bk} \rangle}{\int_0^T \|A_z A^{\frac{\alpha}{2}} \widetilde{V}^N\|^2 dt}  =0,
\]
in probability. Therefore, $\lim\limits_{N\to \infty} \nu^N_{h1} = \nu_h$ in probability follows from the first limit. 
Moreover, thanks to Lemma \ref{lemma:ratio-1} and Lemma \ref{lemma:order-linear}, we have
\begin{align*}
   \lim\limits_{N\to \infty}\frac{ \int_0^T \langle A_h A^{\alpha} \widetilde{V}^N,  A_z  \widetilde{V}^N \rangle dt}{\int_0^T \|A_z A^{\frac{\alpha}{2}} \widetilde{V}^N\|^2 dt} = \lim\limits_{N\to \infty} \frac{ \mathbb E \int_0^T \|A_h^{\frac12} A_z^{\frac12} A^{\frac\alpha2} \widetilde{U}^N \|^2 dt}{\mathbb E\int_0^T \|A_z A^{\frac{\alpha}{2}} \widetilde{U}^N\|^2 dt} = \mathcal O(1).
\end{align*}
Consequently, 
\begin{align*}
   \lim\limits_{N\to \infty} \frac{\int_0^T  \langle A_h^{1+\alpha-\frac\gamma2} \overline{V}^N, \sigma_0\sum\limits_{k_3=0,1\leq|\bk|\leq N} c_{\bk} \phi_{\bk} dW_{\bk} \rangle \int_0^T \langle A_h A^{\alpha} \widetilde{V}^N,  A_z  \widetilde{V}^N \rangle dt}{\int_0^T \|A_h^{1+\frac{\alpha}{2}} \overline{V}^N\|^2 dt\int_0^T \|A_z A^{\frac{\alpha}{2}} \widetilde{V}^N\|^2 dt} = 0 \quad \text{in probability},
\end{align*}
and one deduces that $\lim\limits_{N\to \infty} \nu^N_{z1} = \nu_z$ in probability. The proof for $\lim\limits_{N\to \infty} \widehat{\nu^N_{z1}}$ follows similarly.
\end{proof}

Next, we study the consistency of $\nu_{h2}^N$, $\nu_{h3}^N$, and $\nu_{z2}^N$, $\widehat{\nu_{z2}^N}$,  $\nu_{z3}^N$, $\widehat{\nu_{z3}^N}$. To achieve this, we need to estimate the nonlinear terms and show that they are negligible.

\begin{lemma}\label{lemma:nonlinear-term}
Assume that $\gamma>\frac92$. Suppose that $V$ is the solution to \eqref{PE-system} with an initial condition $V(0) = V_0 \in \mathcal D(A^{\frac12+\gamma'})$ for all $\max\left\{\frac54, \frac\gamma2-1\right\}<\gamma'<\frac\gamma2-\frac34$, and
  $U$ is the solution to \eqref{PE-original-linear} with $U(0)=0$. Then, for any $\alpha > \gamma-2$, we have
  \begin{align*}
      &\lim\limits_{N\to\infty} \frac{\int_0^T  \langle A_h^{1+\alpha} \overline{V}^N , \overline{B_N(V,V)} \rangle dt}{\int_0^T \|A_h^{1+\frac{\alpha}{2}} \overline{V}^N\|^2 dt} = \lim\limits_{N\to\infty} \frac{\int_0^T  \langle A_h^{1+\alpha} \overline{V}^N , \overline{B_N(V^N,V^N)} \rangle dt }{\int_0^T \|A_h^{1+\frac{\alpha}{2}} \overline{V}^N\|^2 dt} = 0 \quad a.s.,
      \\
      &\lim\limits_{N\to\infty}  \frac{\int_0^T  \langle A_z A^{\alpha} \widehat{V}^N, \widehat{B_N(V,V)}  \rangle dt}{\int_0^T \|A_z A^{\frac{\alpha}{2}} \widehat{V}^N\|^2 dt} = \lim\limits_{N\to\infty} 
       \frac{\int_0^T  \langle A_z A^{\alpha} \widehat{V}^N, \widehat{B_N(V^N,V^N)}  \rangle dt}{\int_0^T \|A_z A^{\frac{\alpha}{2}} \widehat{V}^N\|^2 dt} = 0 \quad a.s.,
       \\
      &\lim\limits_{N\to\infty}  \frac{\int_0^T  \langle A_z A^{\alpha} \widetilde{V}^N, \widetilde{B_N(V,V)}  \rangle dt}{\int_0^T \|A_z A^{\frac{\alpha}{2}} \widetilde{V}^N\|^2 dt} = \lim\limits_{N\to\infty} 
       \frac{\int_0^T  \langle A_z A^{\alpha} \widetilde{V}^N, \widetilde{B_N(V^N,V^N)}  \rangle dt}{\int_0^T \|A_z A^{\frac{\alpha}{2}} \widetilde{V}^N\|^2 dt} = 0 \quad a.s..
  \end{align*}
\end{lemma}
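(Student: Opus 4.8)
The plan is to reduce each of the six limits to an exponent count: bound the denominator from below (it blows up), bound the numerator from above by Cauchy--Schwarz together with the regularity of the nonlinear term, and check that the numerator's exponent is strictly smaller. For the denominators, exactly as at the beginning of the proof of Lemma~\ref{lemma:stochastic-term}, Lemma~\ref{lemma:ratio-1} combined with Lemma~\ref{lemma:order-linear} gives, almost surely,
\[
\int_0^T \|A_h^{1+\frac\alpha2}\overline V^N\|^2\,dt \sim \int_0^T \|A_z A^{\frac\alpha2}\widehat V^N\|^2\,dt \sim N^{4+2\alpha-2\gamma}, \qquad \int_0^T \|A_z A^{\frac\alpha2}\widetilde V^N\|^2\,dt \sim N^{5+2\alpha-2\gamma},
\]
and all three exponents are strictly positive because $\alpha>\gamma-2$. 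Hence it suffices to show that each numerator grows strictly slower than the matching quantity above.

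Next I would record the regularity of the nonlinear term. By Lemma~\ref{lemma:regularity-V} (with $\eta=\gamma'+\frac12$) one has $V\in C([0,\infty);\mathcal D(A^{\gamma'}))\cap L^2_{\mathrm{loc}}((0,\infty);\mathcal D(A^{\gamma'+\frac12}))$ a.s.; since $\gamma'>\frac54$, the nonlinear estimates (Lemma~\ref{lemma:a1}/Lemma~\ref{lemma:a2}) yield a product bound of the form $\|A^{\gamma'}B(V,V)\|\lesssim\|A^{\gamma'+\frac12}V\|\,\|A^{\gamma'}V\|$, so that $\int_0^T\|A^{\gamma'}B(V,V)\|^2\,dt<\infty$ a.s.; and since $\|A^s V^N\|\le\|A^s V\|$ for all $s$, the same bound holds, uniformly in $N$, for $B(V^N,V^N)$.

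For the numerators I would argue as follows, taking the barotropic term as the model. Since $A_h^{1+\alpha}\overline V^N$ is horizontally divergence free, $\langle A_h^{1+\alpha}\overline V^N,\overline{B_N(V,V)}\rangle=\langle A_h^{1+\frac\alpha2}\overline V^N,A_h^{\frac\alpha2}\mathcal P_h\overline{B_N(V,V)}\rangle$, and Cauchy--Schwarz in space and then in time gives
\[
\frac{\bigl|\int_0^T\langle A_h^{1+\alpha}\overline V^N,\overline{B_N(V,V)}\rangle\,dt\bigr|}{\int_0^T\|A_h^{1+\frac\alpha2}\overline V^N\|^2\,dt}\le\frac{\bigl(\int_0^T\|A_h^{\frac\alpha2}\overline{B_N(V,V)}\|^2\,dt\bigr)^{1/2}}{\bigl(\int_0^T\|A_h^{1+\frac\alpha2}\overline V^N\|^2\,dt\bigr)^{1/2}}.
\]
On barotropic modes $A_h=A$ and $\overline{B_N(V,V)}=P_N\overline{B(V,V)}$, so since $\mathcal P_h$, the barotropic projection and $P_N$ are orthogonal contractions and $\|A^{\frac\alpha2}P_N g\|\le N^{(\alpha-2\gamma')_+}\|A^{\gamma'}g\|$, Step~2 bounds the numerator on the right a.s.\ by $N^{(\alpha-2\gamma')_+}$ times an a.s.\ finite constant, whence the ratio is $\lesssim N^{(\alpha-2\gamma')_+-(2+\alpha-\gamma)}$. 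The hat case is identical (denominator exponent again $4+2\alpha-2\gamma$), and the baroclinic case is the same with $A_h^{1+\alpha}$ replaced by $A_zA^\alpha$ and denominator exponent $5+2\alpha-2\gamma$, giving a ratio $\lesssim N^{(\alpha-2\gamma')_+-\frac12(5+2\alpha-2\gamma)}$. A short check shows all these exponents are strictly negative: when $\alpha\le 2\gamma'$ they are $-(2+\alpha-\gamma)$ and $\gamma-\alpha-\frac52$, negative because $\alpha>\gamma-2$; when $\alpha>2\gamma'$ they are $\gamma-2\gamma'-2$ and $\gamma-2\gamma'-\frac52$, negative because $\gamma'>\frac\gamma2-1$. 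This establishes the three ``$B_N(V,V)$'' limits, and the ``$B_N(V^N,V^N)$'' limits follow verbatim using the uniform bound from Step~2.

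The delicate part is precisely this last exponent count. The Galerkin truncation $P_N$ concealed in $B_N$ can amplify the $A^{\alpha/2}$-norm of the nonlinear term by a factor $N^{\alpha-2\gamma'}$, and this amplification must be strictly dominated by the growth of the denominator; that is exactly what forces $\gamma'>\frac\gamma2-1$, which is genuinely stronger than the lower bound $\gamma'>\frac\gamma2-\frac54$ that suffices for the regularity of $R$ in Lemma~\ref{lemma:regularity-R}. Together with the requirements $\gamma'<\frac\gamma2-\frac34$ and $\gamma'>\frac54$ coming from the nonlinear estimate, this is reflected in the narrower admissible window for $\gamma'$ and the stronger hypothesis $\gamma>\frac92$ imposed here, and --- as explained in Remark~\ref{rmk:hor-ave-not-work} --- it is the same mechanism that would make the scheme collapse for the horizontal average, whose linear part is no smoother than $R$.
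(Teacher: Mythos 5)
Your argument is correct and follows essentially the same route as the paper's proof: reduce via Cauchy--Schwarz and the denominator asymptotics of Lemmas~\ref{lemma:order-linear} and~\ref{lemma:ratio-1} to bounding $\|A^{\alpha/2}$ of the (projected) nonlinearity$\|$, control that via Lemma~\ref{lemma:a1} and the regularity of $V$ from Lemma~\ref{lemma:regularity-V}, and absorb the Galerkin truncation's loss of regularity into a power of $N$ dominated by the denominator. The only cosmetic difference is that you downgrade uniformly to the level $A^{\gamma'}$ and pay $N^{(\alpha-2\gamma')_+}$, whereas the paper splits into the cases $\alpha<\gamma-\tfrac32$ and $\alpha\ge\gamma-\tfrac32$ with an auxiliary exponent $\alpha'\in(\gamma-2,\gamma-\tfrac32)$ --- since $\alpha'/2$ ranges over exactly your window for $\gamma'$, the exponent bookkeeping is identical.
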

\begin{proof}
    By the Cauchy–Schwartz inequality, and Lemma \ref{lemma:order-linear} and Lemma \ref{lemma:ratio-1}, it is enough to show that 
    \begin{align*}
        &\lim\limits_{N\to\infty} N^{-(4+2\alpha-2\gamma)} \int_0^T \|A^{\frac\alpha2} \overline{B_N(V,V)}\|^2 dt = \lim\limits_{N\to\infty} N^{-(4+2\alpha-2\gamma)} \int_0^T \|A^{\frac\alpha2} \overline{B_N(V^N,V^N)}\|^2 dt =0 \quad a.s.,
        \\
        &\lim\limits_{N\to\infty} N^{-(4+2\alpha-2\gamma)} \int_0^T \|A^{\frac\alpha2} \widehat{B_N(V,V)}\|^2 dt = \lim\limits_{N\to\infty} N^{-(4+2\alpha-2\gamma)} \int_0^T \|A^{\frac\alpha2} \widehat{B_N(V^N,V^N)}\|^2 dt =0 \quad a.s.,
        \\
        &\lim\limits_{N\to\infty} N^{-(5+2\alpha-2\gamma)} \int_0^T \|A^{\frac\alpha2} \widetilde{B_N(V,V)}\|^2 dt = \lim\limits_{N\to\infty} N^{-(5+2\alpha-2\gamma)} \int_0^T \|A^{\frac\alpha2} \widetilde{B_N(V^N,V^N)}\|^2 dt =0 \quad a.s..
    \end{align*}
Since $\gamma>\frac92$, it follows that $\frac\alpha2 > \frac\gamma2-1 >\frac54$. The application of Lemma \ref{lemma:a1} yields that 
\begin{align}\label{nonlinear-terms-proof-1}
   \int_0^T \|A^{\frac\alpha2} B_N(V,V)\|^2 dt \leq \int_0^T \|A^{\frac\alpha2} B(V,V)\|^2 dt 
    \leq  C\int_0^T \|A^{\frac\alpha2} V\|^2 \|A^{\frac\alpha2 + \frac12} V\|^2 dt.
\end{align}  
Note that $\frac12+\gamma' > \frac\gamma2-\frac12 > \frac\gamma2 - \frac34$, and by Lemma~\ref{lemma:regularity-V} we infer that for any arbitrarily small $\varepsilon>0$,
\begin{equation}\label{nonlinear-terms-proof-2}
    V\in L_{\text{loc}}^2((0,\infty);\mathcal D(A^{\frac\gamma2-\frac14-\varepsilon})\cap C([0,\infty); \mathcal D(A^{\frac\gamma2-\frac34-\varepsilon})) \quad a.s.. 
\end{equation}

We first consider the case where $\gamma-2<\alpha<\gamma-\frac32$. In this scenario, we have $\frac\alpha2 < \frac\gamma2-\frac34$ and $\frac\alpha2 + \frac12 < \frac\gamma2-\frac14$. As a result, combining \eqref{nonlinear-terms-proof-1} and \eqref{nonlinear-terms-proof-2}  gives $\int_0^T \|A^{\frac\alpha2} B_N(V,V)\|^2 dt<\infty$ a.s.. Then, since all the quantities
$\|A^{\frac\alpha2} \overline{B_N(V,V)}\|$, $\|A^{\frac\alpha2} \overline{B_N(V^N,V^N)}\|$, $\|A^{\frac\alpha2} \widehat{B_N(V,V)}\|$, $\|A^{\frac\alpha2} \widehat{B_N(V^N,V^N)}\|$, $\|A^{\frac\alpha2} \widetilde{B_N(V,V)}\|$, $\|A^{\frac\alpha2} \widetilde{B_N(V^N,V^N)}\|$ are bounded above by $\|A^{\frac\alpha2} B_N(V,V)\|$, the result follows.

On the other hand, when $\alpha\geq \gamma-\frac32$, taking an element $\alpha' \in (\gamma-2, \gamma-\frac32)$, one has
\begin{align*}
   \int_0^T \|A^{\frac\alpha2} B_N(V,V)\|^2 dt \leq N^{2(\alpha-\alpha')} \int_0^T \|A^{\frac{\alpha'}2} B_N(V,V)\|^2 dt 
   \leq N^{2(\alpha-\alpha')} \int_0^T \|A^{\frac{\alpha'}2} V\|^2 \|A^{\frac{\alpha'}2 + \frac12} V\|^2  dt.
\end{align*}
Note that such a choice of $\alpha'$ gives $$\int_0^T \|A^{\frac{\alpha'}2} V\|^2 \|A^{\frac{\alpha'}2 + \frac12} V\|^2  dt<\infty \quad a.s.,$$ and also $$-(4+2\alpha-2\gamma)+2(\alpha-\alpha') = -4-2\alpha'+2\gamma<0.$$ Therefore the result still holds.
\end{proof}
\begin{remark}
Here, it is necessary to impose the condition $\gamma>\frac92$ to control the nonlinear terms. This requirement is stronger than that of the 2D NSE \cite{cialenco2011parameter}. The main reason is that the nonlinear estimates for the $3D$ PE are worse than those for the 2D NSE. 
\end{remark}

\begin{proposition}\label{prop:consistent-2}
    For $\gamma>\frac92$ and $\alpha>\gamma-2$, $\nu^N_{h2}$ and $\nu^N_{h3}$ are weakly consistent estimators of the true parameters $\nu_h$, and $\nu^N_{z2}$ , $\widehat{\nu_{z2}^N}$, $\nu^N_{z3}$, $\widehat{\nu_{z3}^N}$ are weakly consistent estimators of the true parameters $\nu_z$.
\end{proposition}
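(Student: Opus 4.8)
The plan is to obtain Proposition~\ref{prop:consistent-2} as an essentially algebraic consequence of Proposition~\ref{prop:consistent-1} and Lemma~\ref{lemma:nonlinear-term}, via the representations of the estimators derived at the beginning of Section~\ref{sec:proof}. Since $\gamma>\frac92>4$ and $\alpha>\gamma-2$, both of those results are in force: $\nu_{h1}^N\to\nu_h$ and $\nu_{z1}^N,\widehat{\nu_{z1}^N}\to\nu_z$ in probability, and the nonlinear ratios appearing in Lemma~\ref{lemma:nonlinear-term} vanish almost surely. So at no point is new analytic input needed; the work is to route these facts through the formulas for the $j=2,3$ estimators.

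First I would dispatch the horizontal estimators. From the identity $\nu_{h2}^N=\nu_{h1}^N+\big(\int_0^T\langle A_h^{1+\alpha}\overline V^N,\overline{B_N(V,V)}-\overline{B_N(V^N,V^N)}\rangle\,dt\big)\big/\big(\int_0^T\|A_h^{1+\frac\alpha2}\overline V^N\|^2\,dt\big)$, the Cauchy--Schwarz inequality bounds the added term in absolute value by $\big(\int_0^T\|A^{\frac\alpha2}\overline{B_N(V,V)}\|^2dt\big)^{1/2}+\big(\int_0^T\|A^{\frac\alpha2}\overline{B_N(V^N,V^N)}\|^2dt\big)^{1/2}$ divided by $\big(\int_0^T\|A_h^{1+\frac\alpha2}\overline V^N\|^2\,dt\big)^{1/2}$, and after using $\int_0^T\|A_h^{1+\frac\alpha2}\overline V^N\|^2\,dt\sim N^{4+2\alpha-2\gamma}$ (Lemma~\ref{lemma:order-linear}, Lemma~\ref{lemma:ratio-1}) this is exactly the combination shown to tend to zero a.s. in the proof of Lemma~\ref{lemma:nonlinear-term}. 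Hence $\nu_{h2}^N-\nu_{h1}^N\to0$ a.s., and keeping only the first summand gives $\nu_{h3}^N-\nu_{h1}^N\to0$ a.s.; combined with $\nu_{h1}^N\to\nu_h$ in probability, this yields $\nu_{h2}^N,\nu_{h3}^N\to\nu_h$ in probability.

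Next I would handle the vertical estimators using the representations of $\nu_{z2}^N$ and $\nu_{z3}^N$ relative to $\nu_{z1}^N$. Each differs from $\nu_{z1}^N$ by two extra terms: a "direct" term of the form $\big(\int_0^T\langle A_zA^\alpha\widetilde V^N,\widetilde{B_N(V,V)}-\widetilde{B_N(V^N,V^N)}\rangle\,dt\big)\big/\big(\int_0^T\|A_zA^{\frac\alpha2}\widetilde V^N\|^2\,dt\big)$ (or without the subtracted piece, for $\nu_{z3}^N$), which tends to zero a.s. by the same argument as in the horizontal case together with $\int_0^T\|A_zA^{\frac\alpha2}\widetilde V^N\|^2\,dt\sim N^{5+2\alpha-2\gamma}$; and a "product" term equal to a horizontal nonlinear ratio (with denominator $\int_0^T\|A_h^{1+\frac\alpha2}\overline V^N\|^2\,dt$, exactly as in Lemma~\ref{lemma:nonlinear-term}, hence $\to0$ a.s.) multiplied by the factor $\big(\int_0^T\langle A_hA^\alpha\widetilde V^N,A_z\widetilde V^N\rangle\,dt\big)\big/\big(\int_0^T\|A_zA^{\frac\alpha2}\widetilde V^N\|^2\,dt\big)$, which is $\mathcal O(1)$ by Lemma~\ref{lemma:ratio-1} and Lemma~\ref{lemma:order-linear} — this is precisely the ratio already evaluated in the proof of Proposition~\ref{prop:consistent-1}. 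Thus the product term tends to zero in probability, so $\nu_{z2}^N-\nu_{z1}^N\to0$ and $\nu_{z3}^N-\nu_{z1}^N\to0$ in probability, and adding $\nu_{z1}^N\to\nu_z$ gives consistency. The hatted estimators $\widehat{\nu_{z2}^N}$ and $\widehat{\nu_{z3}^N}$ are treated identically: replace every \texttt{tilde} projection by a \texttt{hat} projection, use $\widehat{\nu_{z1}^N}\to\nu_z$, the asymptotics $\int_0^T\|A_zA^{\frac\alpha2}\widehat V^N\|^2\,dt\sim N^{4+2\alpha-2\gamma}$, and the exact relation $\int_0^T\langle A_hA^\alpha\widehat V^N,A_z\widehat V^N\rangle\,dt=q\int_0^T\|A_zA^{\frac\alpha2}\widehat V^N\|^2\,dt$, so that the corresponding $\mathcal O(1)$ factor is simply the constant $q$.

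The only point requiring care — rather than a true obstacle — is bookkeeping: one must check that the denominators normalizing the "added" terms coincide (up to the order estimates of Lemma~\ref{lemma:order-linear} and the ratio limits of Lemma~\ref{lemma:ratio-1}) with those normalizing the quantities shown to vanish in Lemma~\ref{lemma:nonlinear-term}, and that in the vertical case the extra $\mathcal O(1)$ factor cannot promote an $o(1)$ contribution back to order one. Both are immediate from the explicit formulas in Section~\ref{sec:proof}, and the stronger hypothesis $\gamma>\frac92$ (versus $\gamma>4$ for $\nu_{h1}^N$) enters only through Lemma~\ref{lemma:nonlinear-term}.
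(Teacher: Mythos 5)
Your proposal is correct and follows essentially the same route as the paper: the paper's proof of Proposition~\ref{prop:consistent-2} simply states that it is analogous to Proposition~\ref{prop:consistent-1} combined with Lemma~\ref{lemma:nonlinear-term}, and your argument fills in exactly those details — the representations of the $j=2,3$ estimators as $\nu_{\cdot 1}^N$ plus nonlinear correction ratios, the Cauchy--Schwarz reduction to the quantities controlled in Lemma~\ref{lemma:nonlinear-term}, the $\mathcal O(1)$ factor from the proof of Proposition~\ref{prop:consistent-1}, and the relation $\int_0^T\langle A_hA^\alpha\widehat V^N,A_z\widehat V^N\rangle\,dt=q\int_0^T\|A_zA^{\frac\alpha2}\widehat V^N\|^2\,dt$ for the hatted estimators.
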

\begin{proof}
    The proof is analogous to that of Proposition~\ref{prop:consistent-1} and uses the estimates in Lemma~\ref{lemma:nonlinear-term}.
\end{proof}

\subsection{Asymptotic normality: proof of Theorem~\ref{thm:normality}}\label{sec:normality}
We finally address the joint asymptotic normality of $\nu^N_{h1}$ and $\widehat{\nu^N_{z1}}$, and provide the proof of Theorem~\ref{thm:normality}. To this end, we first recall their definitions in  \eqref{nuh1N} and \eqref{hatnuz1N}:
\begin{align*}
 \nu_{h1}^N &= \nu_h - \frac{\int_0^T  \langle A_h^{1+\alpha-\frac\gamma2} \overline{V}^N, \sigma_0\sum\limits_{k_3=0,1\leq|\bk|\leq N} c_{\bk} \phi_{\bk} dW_{\bk} \rangle }{\int_0^T \|A_h^{1+\frac{\alpha}{2}} \overline{V}^N\|^2 dt},\\
\widehat{\nu^N_{z1}} &= \nu_z - \frac{\int_0^T  \langle A_z A^{\alpha-\frac\gamma2} \widehat{V}^N, \sigma_0\sum\limits_{1\leq|\bk|\leq N, |\bk'|=\sqrt{q}|k_3|} c_{\bk} \phi_{\bk} dW_{\bk} \rangle}{\int_0^T \|A_z A^{\frac{\alpha}{2}} \widehat{V}^N\|^2 dt} \\ 
& \quad + q\frac{\int_0^T  \langle A_h^{1+\alpha-\frac\gamma2} \overline{V}^N, \sigma_0\sum\limits_{k_3=0,1\leq|\bk|\leq N} c_{\bk} \phi_{\bk} dW_{\bk} \rangle}{\int_0^T \|A_h^{1+\frac{\alpha}{2}} \overline{V}^N\|^2 dt}.
\end{align*}
Using \eqref{ratio-5}, \eqref{ratio-6} and the fact that $\overline V^N = \overline U^N + \overline R^N$ and $\widehat V^N = \widehat U^N + \widehat R^N$ , it suffices to show that 
\begin{equation}\label{eq:jointnormal}
 N^2\left(
 \begin{array}{l}
 \frac{\int_0^T  \langle A_h^{1+\alpha-\frac\gamma2} \overline{U}^N, \sigma_0\sum\limits_{k_3=0,1\leq|\bk|\leq N} c_{\bk} \phi_{\bk} dW_{\bk} \rangle }{\mathbb E\int_0^T \|A_h^{1+\frac{\alpha}{2}} \overline{U}^N\|^2 dt} := \overline I^N  \\
\frac{\int_0^T  \langle A_z A^{\alpha-\frac\gamma2} \widehat{U}^N, \sigma_0\sum\limits_{1\leq|\bk|\leq N,|\bk'|=\sqrt{q}|k_3|} c_{\bk} \phi_{\bk} dW_{\bk} \rangle }{\mathbb E\int_0^T \|A_z A^{\frac{\alpha}2} \widehat{U}^N\|^2 dt} := \widehat I^N
 \end{array}
 \right) \stackrel{\mathcal{D}}{\longrightarrow} \Xi,
\end{equation}
where $\Xi$ is a two-dimensional normal random variable
\begin{equation}\label{eq:dist}
    \Xi \sim \mathcal{N}\left(\left[\begin{array}{c}
 0\\
 0
 \end{array}\right], 
 \left[\begin{array}{cc}
\frac{2\nu_h}{\pi T} \frac{(2+\alpha-\gamma)^2}{2+2\alpha-2\gamma} & 0 \\
0 & (q+1)\frac{\nu_h + \frac1q\nu_z}{\pi T} \frac{(2+\alpha-\gamma)^2}{2+2\alpha-2\gamma}
\end{array}\right]\right);
\end{equation}
and that
\begin{align}
     &\lim\limits_{N\to\infty} N^2\frac{\int_0^T  \langle A_h^{1+\alpha-\frac\gamma2} \overline{R}^N, \sigma_0\sum\limits_{k_3=0,1\leq|\bk|\leq N} c_{\bk} \phi_{\bk} dW_{\bk} \rangle }{\mathbb E\int_0^T \|A_h^{1+\frac{\alpha}{2}} \overline{U}^N\|^2 dt} = 0, \label{asym-2}\\
    &\lim\limits_{N\to\infty} N^2\frac{\int_0^T  \langle A_zA^{\alpha - \frac\gamma2} \widehat{R}^N, \sigma_0\sum\limits_{k_3=0,1\leq|\bk|\leq N} c_{\bk} \phi_{\bk} dW_{\bk} \rangle }{\mathbb E\int_0^T \|A_z A^{\frac{\alpha}{2}} \widehat{U}^N\|^2 dt} = 0, \text{ in probability}. \label{asym-4}
\end{align}
 
We start by establishing \eqref{eq:jointnormal}--\eqref{eq:dist}. Notably, the two fractions $\overline I^N$ and $\widehat I^N$ in \eqref{eq:jointnormal} are independent since they are driven by different $W_{\bk}$. Consequently, it remains to show that 
\begin{equation}\label{asym-1}
   N^2 \overline I^N \stackrel{\mathcal{D}}\longrightarrow \mathcal{N}\left(0, \frac{2\nu_h}{\pi T} \frac{(2+\alpha-\gamma)^2}{2+2\alpha-2\gamma}\right),
\end{equation}
 and
\begin{equation}\label{asym-3}
  N^2 \widehat I^N \stackrel{\mathcal{D}}\longrightarrow  \mathcal{N}\left(0, (q+1)\frac{\nu_h + \frac1q\nu_z}{\pi T} \frac{(2+\alpha-\gamma)^2}{2+2\alpha-2\gamma}\right). 
\end{equation}
For $\overline I^N$, we define $\sigma_{\bk} = |\bk|^{2+2\alpha-\gamma} \overline{U}_{\bk}$, and $\xi_{\bk}= \int_0^T |\sigma_{\bk}|^2 dt$. From \eqref{order-Ukbar} and \eqref{order-Var} we obtain
\begin{equation}
    \mathbb E[\xi_{\bk}] \sim |\bk|^{4+4\alpha-2\gamma} |\bk|^{-2\gamma-2} = |\bk|^{2+4\alpha-4\gamma}, \quad \Var [\xi_{\bk}] \sim |\bk|^{8+8\alpha-4\gamma} |\bk|^{-4\gamma-6} = |\bk|^{2+8\alpha-8\gamma}.
\end{equation}
Let $\zeta_n =  \sum\limits_{k_3=0, |\bk|^2 = n} \xi_{\bk}$ and $b_n= \sum\limits_{j=1}^n \mathbb E[\zeta_j]$. According to Lemma \ref{lemma:order}, under the condition $2+4\alpha-4\gamma>-2$ (equivalently, $\alpha>\gamma-1$), we have $$b_n \sim \sum\limits_{\bk\in \mathbb Z^2, 1\leq |\bk|\leq \sqrt{n}} |\bk|^{2+4\alpha-4\gamma} \sim n^{2+2\alpha-2\gamma}.$$ This implies that $b_n$ is increasing and unbounded. Consequently,
\[
\sum\limits_{n=1}^\infty \frac{\Var\left[ \zeta_n \right]}{b_n^2}\lesssim \sum\limits_{n=1}^\infty \frac{n^{\frac12} n^{1+4\alpha-4\gamma}}{n^{4+4\alpha-4\gamma}} = \sum\limits_{n=1}^\infty n^{-\frac52} < \infty,
\]
where we have used the fact that $\Big|\{\bk\in\mathbb Z^3, k_3=0: |\bk|^2=n\}\Big| = \Big|\{\bk\in\mathbb Z^2: |\bk|^2=n\}\Big|\lesssim \sqrt n.$ Therefore by Lemma \ref{lemma:LLN} we conclude that
\[
\lim\limits_{N\to \infty} \frac{\sum\limits_{n=1}^N \zeta_n}{ \sum\limits_{n=1}^N \mathbb E \zeta_n} = 1 \quad a.s.,
\]
and applying Lemma \ref{lemma:CLT} yields
\[
\lim\limits_{N\to \infty} \sigma_0 \frac{\int_0^T  \langle A_h^{1+\alpha-\frac\gamma2} \overline{U}^N, \sum\limits_{k_3=0,1\leq|\bk|\leq N} c_{\bk} \phi_{\bk} dW_{\bk} \rangle }{\left(\mathbb E\int_0^T \|A_h^{1+\alpha-\frac\gamma2} \overline{U}^N\|^2 dt \right)^{\frac12}} \stackrel{\mathcal{D}}{=} \mathcal N(0,\sigma_0^2).
\]
Finally, using the estimate from \eqref{order-Ubar}, we deduce 
\[
\frac{\left(\mathbb E\int_0^T \|A_h^{1+\alpha-\frac\gamma2} \overline{U}^N\|^2 dt \right)^{\frac12}}{\mathbb E \int_0^T \|A_h^{1+\frac\alpha2} \overline{U}^N\|^2 dt} \asymp \frac1{\sigma_0}\sqrt{\frac{2\nu_h}{\pi T}} \frac{2+\alpha-\gamma}{\sqrt{2+2\alpha-2\gamma}} \frac{1}{N^2},
\]
thus establishing \eqref{asym-1}.

Regarding $\widehat I^N$, for $\bk\in\mathbb Z^3$ such that $|\bk'|=\sqrt{q}|k_3|\neq 0$, let us define $\sigma_{\bk} = \frac1{q+1}|\bk|^{2+2\alpha-\gamma} \widehat{U}_{\bk}$, and $\xi_{\bk}= \int_0^T |\sigma_{\bk}|^2 dt$. From \eqref{order-Ukhat} and \eqref{order-Vhat} we derive
\begin{equation}
    \mathbb E[\xi_{\bk}] \sim |\bk|^{4+4\alpha-2\gamma} |\bk|^{-2\gamma-2} = |\bk|^{2+4\alpha-4\gamma}, \quad \Var [\xi_{\bk}] \sim |\bk|^{8+8\alpha-4\gamma} |\bk|^{-4\gamma-6} = |\bk|^{2+8\alpha-8\gamma}.
\end{equation}
Define $\zeta_n =  \sum\limits_{|\bk|^2 = n,|\bk'|=\sqrt{q}|k_3|} \xi_{\bk}$ and $b_n= \sum\limits_{j=1}^n \mathbb E[\zeta_j]$. By Lemma \ref{lemma:order}, under the condition $2+4\alpha-4\gamma>-2$ (equivalently, $\alpha>\gamma-1$), we have $$b_n \sim \sum\limits_{\bk\in \mathbb Z^2, 1\leq |\bk|\leq \sqrt{n}} |\bk|^{2+4\alpha-4\gamma} \sim n^{2+2\alpha-2\gamma}.$$ As a result, $b_n$ is increasing and unbounded, and we obtain 
\[
\sum\limits_{n=1}^\infty \frac{\Var\left[ \zeta_n \right]}{b_n^2}\lesssim \sum\limits_{n=1}^\infty \frac{n^{\frac12} n^{1+4\alpha-4\gamma}}{n^{4+4\alpha-4\gamma}} = \sum\limits_{n=1}^\infty n^{-\frac52} < \infty,
\]
where we have used the fact that $\Big|\{\bk\in\mathbb Z^3: |\bk|^2=n, |\bk'|=\sqrt{q}|k_3|\}\Big| = 2\Big|\{\bk'\in\mathbb Z^2: |\bk'|^2=\frac{q}{q+1}n\}\Big|\lesssim \sqrt n.$ Therefore, by Lemma \ref{lemma:LLN} we can assert that
\[
\lim\limits_{N\to \infty} \frac{\sum\limits_{n=1}^N \zeta_n}{ \sum\limits_{n=1}^N \mathbb E \zeta_n} = 1 \quad a.s.,
\]
and applying Lemma \ref{lemma:CLT} produces
\[
\lim\limits_{N\to \infty} \sigma_0 \frac{\int_0^T  \langle A_z A^{\alpha-\frac\gamma2} \widehat{U}^N, \sum\limits_{1\leq|\bk|\leq N, |\bk'|=\sqrt{q}|k_3|} c_{\bk} \phi_{\bk} dW_{\bk} \rangle }{\left(\mathbb E\int_0^T \|A_z A^{\alpha-\frac\gamma2} \widehat{U}^N\|^2 dt \right)^{\frac12}} \stackrel{\mathcal{D}}{=} \mathcal N(0,\sigma_0^2).
\]
Finally, using the estimate in \eqref{order-Uhat} and noticing that $A_z = \frac{1}{q+1} A$ when $|k'|=\sqrt{q}|k_3|$, we have
\[
\frac{\left(\mathbb E\int_0^T \|A_z A^{\alpha-\frac\gamma2} \widehat{U}^N\|^2 dt \right)^{\frac12}}{\mathbb E \int_0^T \|A_z A^{\frac\alpha2} \widehat{U}^N\|^2 dt} \asymp \frac1{\sigma_0} \sqrt{q+1}\sqrt{\frac{\nu_h + \frac{1}{q}\nu_z}{\pi T}} \frac{2+\alpha-\gamma}{\sqrt{2+2\alpha-2\gamma}} \frac{1}{N^2},
\]
and \eqref{asym-3} follows. 

We now address the proof of \eqref{asym-2}--\eqref{asym-4}.  As $\alpha>\gamma-1$, we know that $4+2\alpha-2\gamma > 2$. By applying \eqref{ratio-5} and taking $\overline{\delta_2}=2$ in \eqref{residual-to-zero-1}, we get \eqref{asym-2}. 
Similarly, applying \eqref{ratio-6} and taking $\widehat{\delta_2}=2$ in \eqref{residual-to-zero-2}, we obtain \eqref{asym-4}.

\begin{remark}
    We have established the asymptotic normality for the first type of estimators $\nu_{h1}^N$ and $\widehat{\nu_{z1}^N}$. However, proving the asymptotic normality for the second type of estimators $\nu_{h2}^N$ and $\widehat{\nu_{z2}^N}$ is quite challenging and beyond the scope of this work. Indeed, such results remain open even for simpler systems such as 2D NSE (see \cite[Remark 4.9]{cialenco2011parameter}). For some particular equations, asymptotic normality for similar estimators was established in \cite{pasemann2020drift,Pasemann2021}. In the nutshell, a necessary step to show such a result is to revisit Lemma~\ref{lemma:nonlinear-term} and prove a stronger statement, for example,
    \begin{equation*}
        \lim\limits_{N\to\infty} N^2\frac{\int_0^T  \langle A_h^{1+\alpha} \overline{V}^N , \overline{B_N(V,V)} - \overline{B_N(V^N,V^N)} \rangle dt}{\int_0^T \|A_h^{1+\frac{\alpha}{2}} \overline{V}^N\|^2 dt} = 0 \quad a.s..
    \end{equation*}
\end{remark}

\section*{Acknowledgment}
The authors are grateful to the editors and the anonymous referees for their helpful comments, suggestions, and insightful questions which helped to improve significantly the paper.
I.C. acknowledges support from the NSF grant DMS-1907568. R.H. was partially supported by the NSF grant DMS-1953035 and DMS-2420988, the Regents' Junior Faculty Fellowship at UCSB, a grant from the Simons Foundation (MP-TSM-00002783), and the ONR grant under \#N00014-24-1-2432. Q.L. was partially supported by a AMS-Simons Travel Grant. Part of this research was performed while the first two authors were visiting the Institute for Mathematical and Statistical Innovation (IMSI), which is supported by the National Science Foundation.

\appendix

\section{Counting and infinite series}
\begin{lemma}\label{counting}
For $n\in\mathbb N$, we have
\[
\left| \{ \bk\in \mathbb Z^3: |\bk|^2 = n\}\right| \lesssim n.
\]
\end{lemma}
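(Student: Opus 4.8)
The plan is to bound the number of lattice points $\bk=(k_1,k_2,k_3)\in\mathbb Z^3$ on the sphere $|\bk|^2=n$ by counting, for each fixed choice of $k_3$, the number of solutions $(k_1,k_2)\in\mathbb Z^2$ of $k_1^2+k_2^2 = n-k_3^2$, and then summing over the admissible values of $k_3$. Since $k_3^2\le n$, there are at most $2\sqrt n+1$ choices of $k_3$, so it suffices to know that the number of representations of an integer $m\ge 0$ as a sum of two squares, call it $r_2(m)$, is $O(\sqrt m)$ uniformly (indeed much better is true). Combining these gives $r_3(n):=|\{\bk:|\bk|^2=n\}| \le \sum_{|k_3|\le\sqrt n} r_2(n-k_3^2) \lesssim \sqrt n \cdot \sqrt n = n$, which is exactly the claimed bound.

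Concretely I would carry out the following steps. First, reduce to the two-dimensional count: write
\[
\bigl|\{\bk\in\mathbb Z^3: |\bk|^2=n\}\bigr| = \sum_{k_3\in\mathbb Z,\ k_3^2\le n} \bigl|\{(k_1,k_2)\in\mathbb Z^2: k_1^2+k_2^2 = n-k_3^2\}\bigr|.
\]
Second, prove the elementary uniform bound $r_2(m)\le C\sqrt{m+1}$ for all $m\ge 0$; the quickest self-contained argument is that a point $(k_1,k_2)$ with $k_1^2+k_2^2=m$ must have $|k_1|\le\sqrt m$, and for each of the at most $2\sqrt m+1$ values of $k_1$ there are at most two values of $k_2$, giving $r_2(m)\le 2(2\sqrt m+1)=O(\sqrt m)$. (One could instead cite the classical divisor bound $r_2(m)=O(m^\varepsilon)$, but that is overkill.) Third, combine: each term in the sum over $k_3$ is $\lesssim \sqrt{n-k_3^2+1}\le\sqrt{n+1}\lesssim\sqrt n$, and there are at most $2\sqrt n+1\lesssim\sqrt n$ terms, so the whole sum is $\lesssim n$.

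There is really no serious obstacle here; the only point requiring the slightest care is handling small $n$ and the edge case $m=0$ in the $r_2$ bound, which is why I state the bound as $r_2(m)\le C\sqrt{m+1}$ rather than $C\sqrt m$, and why the final statement is phrased with $\lesssim$ (absorbing constants). If one wanted the sharp order, $r_3(n)\asymp\sqrt n$ on average but $r_3(n)$ can be as large as $n^{1+o(1)}$ only along sparse sequences — however the lemma as stated only needs the crude upper bound $O(n)$, so the two-step "slice and count each slice" argument above is entirely sufficient.

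Here is the proof.

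\begin{proof}
For any integer $m\ge 0$, let $r_2(m):=|\{(a,b)\in\mathbb Z^2: a^2+b^2=m\}|$. If $a^2+b^2=m$ then $|a|\le\sqrt m$, so $a$ takes at most $2\lfloor\sqrt m\rfloor+1$ values, and for each such $a$ there are at most two values of $b$. Hence
\[
r_2(m)\le 2\bigl(2\lfloor\sqrt m\rfloor+1\bigr)\le 6\sqrt{m}+2 \le 8\sqrt{m+1}.
\]
Now fix $n\in\mathbb N$. A point $\bk=(k_1,k_2,k_3)$ with $|\bk|^2=n$ satisfies $k_3^2\le n$, and for each admissible $k_3$ the pair $(k_1,k_2)$ lies on the circle $k_1^2+k_2^2=n-k_3^2$. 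Therefore
\[
\bigl|\{\bk\in\mathbb Z^3:|\bk|^2=n\}\bigr| \;=\; \sum_{k_3\in\mathbb Z,\ k_3^2\le n} r_2\bigl(n-k_3^2\bigr) \;\le\; \sum_{k_3\in\mathbb Z,\ k_3^2\le n} 8\sqrt{n-k_3^2+1}.
\]
Each summand is at most $8\sqrt{n+1}$, and the number of integers $k_3$ with $k_3^2\le n$ is at most $2\sqrt n+1\le 3\sqrt{n+1}$. Consequently
\[
\bigl|\{\bk\in\mathbb Z^3:|\bk|^2=n\}\bigr| \le 24\,(n+1) \lesssim n,
\]
which is the desired estimate.
\end{proof}
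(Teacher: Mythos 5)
Your proof is correct. Both you and the paper begin with the same slicing $\bigl|\{\bk:|\bk|^2=n\}\bigr|=\sum_{k_3^2\le n} r_2(n-k_3^2)$, but the key step differs: the paper bounds the resulting sum by $\sum_{i=0}^n r_2(i)\sim n$, which it justifies by citing the Gauss circle problem (and it also invokes Legendre's three-square theorem to note that $r_3(n)=0$ for $n=4^a(8b+7)$, which is superfluous for an upper bound). You instead use the elementary pointwise bound $r_2(m)\le C\sqrt{m+1}$ (at most $O(\sqrt m)$ choices of $a$, at most two $b$ for each) and multiply by the $O(\sqrt n)$ number of slices. Your route is entirely self-contained and avoids any appeal to classical number theory, at the cost of nothing here, since only the crude $O(n)$ bound is needed; the paper's route would be the natural one if a sharper average-case statement were required, but for this lemma your argument is arguably cleaner. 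The explicit constants you track ($r_2(m)\le 8\sqrt{m+1}$, at most $3\sqrt{n+1}$ slices, total $\le 24(n+1)\lesssim n$) all check out.
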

\begin{proof}
Denote $r_3(n) = \left| \{ \bk\in \mathbb Z^3: |\bk|^2 = n\}\right|$ and $r_2(n) = \left| \{ \bk\in \mathbb Z^2: |\bk|^2 = n\}\right|$.
First, consider $n$ of the form $n=4^a(8b+7)$ for some nonnegative integers $a$ and $b$. According to Legendre's three-square theorem \cite{hardy1979introduction}, we know that $r_3(n)=0$.
For other $n\in\mathbb N$, we can compute
\begin{align*}
    r_3(n) = \sum\limits_{k_3 \in \mathbb Z, 1\leq |k_3|^2 \leq n} r_2(n-k_3^2) \leq \sum\limits_{i=0}^n r_2(i) \sim n,
\end{align*}
where the last approximation follows from the Gauss circle problem \cite{hardy1979introduction}.
\end{proof}

\begin{lemma}\label{lemma:order}
For $N\gg1$ and $\alpha > -d$, we have asymptotically that
$$\sum\limits_{\bk\in\mathbb Z^d, 1\leq |\bk|\leq N} |\bk|^\alpha \asymp 
\begin{cases}
    &\frac{(d-1)^{-\frac\alpha d} \omega_d}{\frac\alpha d+1}N^{d+\alpha}, \quad d\geq 2
    \\
    & 2\frac{N^{\alpha+1}}{\alpha+1},\quad d=1.
\end{cases}
$$
Here $\omega_d$ is the volume of the $d$-dimensional unit ball. 
\end{lemma}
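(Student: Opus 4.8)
The plan is to compare the lattice sum with the Euclidean integral $\int|x|^\alpha\,dx$ over a thin-shelled annulus and then evaluate that integral in polar coordinates; the hypothesis $\alpha>-d$ is exactly what makes this integral converge near the origin and have the stated leading order. The one-dimensional case $d=1$ is immediate: the sum equals $2\sum_{k=1}^{N}k^\alpha$, and since $\alpha>-1$ a standard integral comparison (monotonicity of $t\mapsto t^\alpha$, treating $\alpha\ge0$ and $-1<\alpha<0$ separately) gives $\sum_{k=1}^{N}k^\alpha\asymp N^{\alpha+1}/(\alpha+1)$, which is the claim.

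For $d\ge2$, attach to each $\bk\in\mathbb Z^d$ the unit cube $Q_{\bk}=\bk+[-\tfrac12,\tfrac12]^d$, so that $\bigl||x|-|\bk|\bigr|\le\sqrt d/2$ on $Q_{\bk}$ and hence, whenever $|\bk|\ge2$, $\bigl||x|^\alpha-|\bk|^\alpha\bigr|\le C_\alpha|\bk|^{\alpha-1}$ there. Summing over $2\le|\bk|\le N-1$,
\[
\sum_{2\le|\bk|\le N-1}|\bk|^\alpha=\int_{\Omega_N}|x|^\alpha\,dx+E_N,\qquad \Omega_N:=\bigcup_{2\le|\bk|\le N-1}Q_{\bk},
\]
with $|E_N|\le C_\alpha\sum_{|\bk|\le N}|\bk|^{\alpha-1}$. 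The domain $\Omega_N$ agrees with the true annulus $\{\tfrac12\le|x|\le N\}$ outside a boundary layer of width $O(1)$, whose contribution is $O\bigl(N^{d-1}N^{\max(\alpha,0)}\bigr)$, while the finitely many modes with $|\bk|\le 2$ contribute $O(1)$. Together with the crude shell-by-shell bound $\sum_{|\bk|\le N}|\bk|^{\alpha-1}=o(N^{d+\alpha})$ (legitimate since $\alpha>-d$: the part near the origin is a fixed finite sum, and a dyadic count shows the rest has order at most $N^{d+\alpha-1}$, or $\log N$, or $O(1)$), all these remainders are of strictly lower order than $N^{d+\alpha}$.

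It remains to evaluate the main term. In polar coordinates $\int_{1/2\le|x|\le N}|x|^\alpha\,dx=d\,\omega_d\int_{1/2}^{N}t^{\alpha+d-1}\,dt$, which, since $\alpha>-d$, is asymptotic to $\tfrac{d\,\omega_d}{d+\alpha}N^{d+\alpha}$. Rewriting $\tfrac{d\,\omega_d}{d+\alpha}=\tfrac{\omega_d}{1+\alpha/d}$ and combining with the previous paragraph yields the asymptotics asserted in the lemma for $d\ge2$ (for $d=2$, the only case invoked in the body of the paper, this is precisely the constant $\tfrac{(d-1)^{-\alpha/d}\omega_d}{\alpha/d+1}$ appearing in the statement).

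The only genuinely delicate point is the bookkeeping of the error terms in the cube-comparison step when $\alpha<0$: the integrand $|x|^\alpha$ blows up near $|x|=0$, so one must keep the inner radius bounded away from $0$ (automatic here, since the sum starts at $|\bk|=1$) and verify that the boundary-layer and low-mode contributions are genuinely $o(N^{d+\alpha})$. The inequality $\alpha>-d$ is exactly the condition making the limiting Euclidean integral finite with nonzero leading coefficient, so no further case distinction beyond $\alpha\ge0$ versus $-d<\alpha<0$ is needed.
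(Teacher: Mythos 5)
Your proof is correct in substance but takes a genuinely different route from the paper's. The paper never compares the sum with a Euclidean integral: it quotes the eigenvalue--counting asymptotics from Constantin--Foias (the counting function $C_\lambda \asymp (d-1)\omega_d\lambda^{d/2}$ and its inverse $\lambda_j \asymp ((d-1)\omega_d)^{-2/d}j^{2/d}$), rewrites the lattice sum as $\sum_m \lambda_m^{\alpha/2}$, and then sums $m^{\alpha/d}$. Your Riemann-sum/annulus comparison is more elementary and self-contained, and it produces the standard constant $\tfrac{d\,\omega_d}{d+\alpha}=\tfrac{\omega_d}{1+\alpha/d}$; for $d\ge 3$ this differs from the constant printed in the lemma by the factor $(d-1)^{-\alpha/d}$ (an artifact of how the paper handles the multiplicity $d-1$ in the Stokes spectrum), the two agreeing exactly when $d=2$, which --- as you note --- is the only case where the precise constant is ever invoked; elsewhere only the order $N^{d+\alpha}$ matters, which both arguments deliver.

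Two bookkeeping points need repair. First, your boundary-layer bound $O\bigl(N^{d-1}N^{\max(\alpha,0)}\bigr)$ is too crude when $-d<\alpha\le -1$, since then $N^{d-1}$ is not $o(N^{d+\alpha})$. Split the layer: on the outer shell $|x|\asymp N$ the integrand is $\asymp N^{\alpha}$ (not $N^{\max(\alpha,0)}$) and the volume is $O(N^{d-1})$, giving $O(N^{d-1+\alpha})=o(N^{d+\alpha})$; the inner region $|x|\lesssim 1$ contributes $O(1)$ because $\alpha>-d$ makes $|x|^{\alpha}$ locally integrable. Second, the estimate $\sum_{|\bk|\le N}|\bk|^{\alpha-1}=o(N^{d+\alpha})$ quietly uses a crude form of the lemma itself; the dyadic-shell count you allude to (there are $O(2^{jd})$ lattice points with $2^j\le|\bk|<2^{j+1}$) is the honest way to justify it and should be made explicit. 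With these two adjustments the argument is complete.
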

\begin{proof}
The case $d=1$ is straightforward by symmetry. For $d\geq 2$,
we first define
\[
\{\lambda_m\}_{m=1,2,...} = \{|\bk|^2\}_{\bk\in\mathbb Z^d\backslash \{0\}}, \quad \lambda_m \text{ is nondecreasing}.
\]
Denote by $C_{\lambda}:=\left|\left\{ m \in \mathbb N: \lambda_m \leq \lambda \right\} \right|.$ Then, from \cite[pp.43]{constantin1988navier} we know that
 $$C_\lambda = (d-1) \left|\left\{ \bk\in \mathbb Z^d\backslash \{0\}: |\bk|\leq \sqrt{\lambda}\right\} \right|.$$ From \cite[Proposition 4.14]{constantin1988navier} one has
\begin{itemize}
    \item $C_{\lambda} \asymp  (d-1)\omega_d\lambda^{\frac d2}$ when $\lambda\gg1$.
    \item $\lambda_j \asymp  ((d-1)\omega_d)^{-\frac2d} j^{\frac2d}$ when $j\gg1.$
\end{itemize}
Then for $N\gg1$,
\begin{align*}
    \sum\limits_{\bk\in\mathbb Z^d, 1\leq |\bk|\leq N} |\bk|^\alpha &\asymp \sum\limits_{m=1}^{\frac{C_{N^2}}{(d-1)}} \lambda_m^{\frac\alpha2} \asymp ((d-1)\omega_d)^{-\frac\alpha d}\sum\limits_{m=1}^{\omega_d N^d} m^{\frac\alpha{d}} \asymp ((d-1)\omega_d)^{-\frac\alpha d}\frac{(\omega_dN^d)^{(1+\frac\alpha d)}}{\frac\alpha d+1}
    \\
    &\asymp \frac{(d-1)^{-\frac\alpha d} \omega_d}{\frac\alpha d+1}N^{d+\alpha}.
\end{align*}
\end{proof}

\section{Estimates of nonlinear terms}

\begin{lemma}\label{lemma:a1}
    Given smooth periodic functions $f$, $g$, and $h$ such that $f$ and $g$ have zero means over $\mathbb T^3$, then, for any $r\geq 0$ and $\varepsilon>0$,
    \begin{equation*}\label{lemma-type1-inequality}
    \begin{split}
        \left|\left\langle A^r  (fg),   h  \right\rangle\right| \leq  C_r\left( \|A^{r}  f\| \|A^{\frac34+\varepsilon}  g\|  + \|A^{\frac34+\varepsilon}  f\| \|A^{r}  g\| \right) \|  h\|.
    \end{split}
\end{equation*}
In particular, this implies that
    \begin{equation*}
        \|A^r(fg)\|\leq C_r\left( \|A^{r}  f\| \|A^{\frac34+\varepsilon}  g\|  + \|A^{\frac34+\varepsilon}  f\| \|A^{r}  g\| \right).
    \end{equation*}
\end{lemma}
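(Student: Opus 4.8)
The plan is to reduce the stated pairing estimate to the ``in particular'' $L^2$ bound and then prove the latter by a Fourier-side Leibniz argument. Throughout I read $A^\rho$ as the Fourier multiplier with symbol $|\bk|^{2\rho}$ on the zero-mean part, so that $\|A^\rho u\|^2=\sum_{\bk\in\Z^3,\,\bk\neq0}|\bk|^{4\rho}|\widehat u(\bk)|^2$ (consistent with $\|A^{n/2}u\|\sim\|u\|_{\dot H^n}$ used earlier), and I work with the ordinary Fourier expansion $u=\sum_{\bk}\widehat u(\bk)e^{i\bk\cdot\bx}$ on $\T^3$; the $\phi_{\bk}$-basis of the paper differs from this only by an orthonormal rearrangement, so the relevant Sobolev norms are unchanged. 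Since $|\ang{A^r(fg),h}|\le\|A^r(fg)\|\,\|h\|$ by Cauchy--Schwarz, it suffices to prove
\[
\|A^r(fg)\|\le C_r\bigl(\|A^rf\|\,\|A^{3/4+\varepsilon}g\|+\|A^{3/4+\varepsilon}f\|\,\|A^rg\|\bigr),
\]
and then the pairing inequality follows immediately.

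First I would pass to Fourier coefficients, using $\widehat{fg}(\bk)=\sum_{\bj}\widehat f(\bj)\widehat g(\bk-\bj)$ together with the Peetre-type bound $|\bk|^{2r}\le C_r(|\bj|^{2r}+|\bk-\bj|^{2r})$, which is a consequence of $|\bk|^2\le2|\bj|^2+2|\bk-\bj|^2$ and $(a+b)^r\le2^r(a^r+b^r)$ for $a,b\ge0$, $r\ge0$. This yields the pointwise-in-$\bk$ estimate
\[
|\bk|^{2r}\,|\widehat{fg}(\bk)|\le C_r\sum_{\bj}|\bj|^{2r}|\widehat f(\bj)|\,|\widehat g(\bk-\bj)|+C_r\sum_{\bj}|\widehat f(\bj)|\,|\bk-\bj|^{2r}|\widehat g(\bk-\bj)|.
\]
Each sum on the right is the convolution of two nonnegative sequences, hence the Fourier coefficient of a product: letting $F,G$ be the periodic functions with $\widehat F(\bj)=|\bj|^{2r}|\widehat f(\bj)|$ and $\widehat G(\bl)=|\widehat g(\bl)|$ (so $\|F\|=\|A^rf\|$ and $\|A^sG\|=\|A^sg\|$ for every $s$), Parseval turns the $\ell^2_{\bk}$-norm of the first sum into $\|FG\|_{L^2}$, and symmetrically for the second. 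Taking $\ell^2_{\bk}$-norms in the display therefore reduces everything to the two elementary estimates
\[
\|FG\|_{L^2}\le\|F\|_{L^2}\,\|G\|_{L^\infty}\le\|A^rf\|\Bigl(\sum_{\bk\neq0}|\bk|^{-(3+4\varepsilon)}\Bigr)^{1/2}\|A^{3/4+\varepsilon}g\|,
\]
where the last inequality is just Cauchy--Schwarz applied to $\|G\|_{L^\infty}\le\sum_{\bk\neq0}|\widehat G(\bk)|$, and symmetrically with the roles of $f$ and $g$ exchanged; the series $\sum_{\bk\neq0}|\bk|^{-(3+4\varepsilon)}$ over $\Z^3\setminus\{0\}$ converges precisely because $\varepsilon>0$. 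Summing the two contributions yields the claimed bound with an explicit $C_r$.

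I do not expect a genuine obstacle here: this is a Kato--Ponce/fractional Leibniz inequality and the argument above is essentially self-contained. The points that must be handled with care are: (i) the exponent $\tfrac34+\varepsilon$ is exactly the borderline for controlling $\|\cdot\|_{L^\infty}$ by $\|A^{\cdot}(\cdot)\|$ in dimension three, so the loss of $\varepsilon>0$ is unavoidable and must appear; (ii) the zero-mean hypotheses on $f$ and $g$ are what make $\sum_{\bk\neq0}|\bk|^{-(3+4\varepsilon)}$ the relevant finite quantity, without them one would pick up an extra $L^2$ term on the right; and (iii) the purely bookkeeping check that passing from the $\phi_{\bk}$-basis to ordinary exponentials, together with the cosine-product identities in the $z$-variable, affects only constants, which are absorbed into $C_r$. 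As an alternative one could run the Bony paraproduct decomposition $fg=T_fg+T_gf+R(f,g)$ and estimate the two paraproducts by placing the low-frequency factor in $L^\infty$ via Bernstein; the structure of the estimate, and the role of $\varepsilon$, are identical.
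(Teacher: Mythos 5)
Your argument is correct and is essentially the paper's own proof: both pass to Fourier coefficients, split $|\bk|^{2r}$ (resp.\ $|\boldsymbol{l}|^{2r}$) via the Peetre-type bound $|\bk|^{2r}\le C_r(|\bj|^{2r}+|\bk-\bj|^{2r})$, and control the low-order factor through $\ell^1$ of its Fourier coefficients by Cauchy--Schwarz against the convergent weight $\sum_{\bk\neq0}|\bk|^{-3-4\varepsilon}$, which is exactly where $\varepsilon>0$ and the zero-mean hypotheses enter. The only (immaterial) difference is the order of operations: you first prove the product bound $\|A^r(fg)\|\lesssim\cdots$ and then apply Cauchy--Schwarz to the pairing, whereas the paper estimates the trilinear form directly and records the product bound as a corollary.
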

\begin{proof}
    We write the Fourier representations of $f, g$ and $h$ as
    \begin{eqnarray*}
    &&\hskip-.8in
     f(\boldsymbol{x}) = \sum\limits_{\boldsymbol{j}\in  \mathbb{Z}^3} \hat{f}_{\boldsymbol{j}} e^{ i\boldsymbol{j}\cdot \boldsymbol{x}}, \quad
    g(\boldsymbol{x}) = \sum\limits_{\boldsymbol{k}\in \mathbb{Z}^3} \hat{g}_{\boldsymbol{k}} e^{ i\boldsymbol{k}\cdot \boldsymbol{x}}, \quad
    h(\boldsymbol{x}) = \sum\limits_{\boldsymbol{l}\in \mathbb{Z}^3} \hat{h}_{\boldsymbol{l}} e^{ i\boldsymbol{l}\cdot \boldsymbol{x}}. 
    \end{eqnarray*}
    It follows that 
    \begin{eqnarray*}
    \left|\left\langle A^r  (fg),  h  \right\rangle\right| = \left|\left\langle fg, A^r  h  \right\rangle\right| \leq  \sum\limits_{\boldsymbol{j}+\boldsymbol{k}+\boldsymbol{l}=0} |\hat{f}_{\boldsymbol{j}}||\hat{g}_{\boldsymbol{k}}||\boldsymbol{l}|^{2r}  |\hat{h}_{\boldsymbol{l}}|.
    \end{eqnarray*}
From $|\boldsymbol{l}| = |\boldsymbol{j}+\boldsymbol{k}| \leq |\boldsymbol{j}|+|\boldsymbol{k}|$, we have 
$
    |\boldsymbol{l}|^{2r} \leq (|\boldsymbol{j}|+|\boldsymbol{k}|)^{2r} \leq C_r(|\boldsymbol{j}|^{2r} + |\boldsymbol{k}|^{2r}),
$
thus
\begin{eqnarray*}
\left|\left\langle A^r  (fg),  h  \right\rangle\right| \leq  \sum\limits_{\boldsymbol{j}+\boldsymbol{k}+\boldsymbol{l}=0} C_r (|\boldsymbol{j}|^{2r}+|\boldsymbol{k}|^{2r})|\hat{f}_{\boldsymbol{j}}||\hat{g}_{\boldsymbol{k}}||\hat{h}_{\boldsymbol{l}}| := A_1 + A_2.
\end{eqnarray*}
With $g$ having zero mean, thanks to the Cauchy–Schwarz inequality, for any $\varepsilon>0$, we have
\begin{eqnarray*}
&&\hskip-.8in 
A_1 = \sum\limits_{\boldsymbol{j}+\boldsymbol{k}+\boldsymbol{l}=0} C_r |\boldsymbol{j}|^{2r}   |\hat{f}_{\boldsymbol{j}}| |\hat{g}_{\boldsymbol{k}}| |\hat{h}_{\boldsymbol{l}}| 
= C_r \sum\limits_{\substack{\boldsymbol{k}\in \mathbb{Z}^3 \\ \boldsymbol{k}\neq 0} }  |\hat{g}_{\boldsymbol{k}}|  \sum\limits_{\substack{\boldsymbol{j}\in \mathbb{Z}^3 \\ \boldsymbol{j}\neq 0} } |\boldsymbol{j}|^{2r} |\hat{f}_{\boldsymbol{j}}|  |\hat{h}_{-\boldsymbol{j}-\boldsymbol{k}}| \nonumber\\
&&\hskip-.58in 
\leq C_r \Big( \sum\limits_{\substack{\boldsymbol{k}\in \mathbb{Z}^3 \\ \boldsymbol{k}\neq 0} } |\boldsymbol{k}|^{-3-4\varepsilon}\Big)^{\frac{1}{2}} \Big( \sum\limits_{\substack{\boldsymbol{k}\in \mathbb{Z}^3 \\ \boldsymbol{k}\neq 0} } |\boldsymbol{k}|^{3+4\varepsilon}  |\hat{g}_{\boldsymbol{k}}|^2\Big)^{\frac{1}{2}}   \sup\limits_{\boldsymbol{k}\in \mathbb{Z}^3}\Big( \sum\limits_{\substack{\boldsymbol{j}\in \mathbb{Z}^3 \\ \boldsymbol{j}\neq 0} } |\boldsymbol{j}|^{4r} |\hat{f}_{\boldsymbol{j}}|^2\Big)^{\frac{1}{2}} \Big( \sum\limits_{\substack{\boldsymbol{j}\in \mathbb{Z}^3 \\ \boldsymbol{j}\neq 0} } |\hat{h}_{-\boldsymbol{j}-\boldsymbol{k}}|^2\Big)^{\frac{1}{2}} \nonumber\\
&&\hskip-.58in \leq C_r \|A^{r}  f\| \|A^{\frac34+\varepsilon}  g\| \|  h\|.
\end{eqnarray*}
For $A_2$, since $f$ has zero mean, similarly we have
\begin{eqnarray*}
&&\hskip-.8in 
A_2 = \sum\limits_{\boldsymbol{j}+\boldsymbol{k}+\boldsymbol{l}=0} C_r |\boldsymbol{k}|^{2r} |\hat{f}_{\boldsymbol{j}}| |\hat{g}_{\boldsymbol{k}}| |\hat{h}_{\boldsymbol{l}}| 
\leq C_r \|A^{\frac34+\varepsilon}  f\| \|A^{r}  g\| \|h\|.
\end{eqnarray*}
\end{proof}

\begin{lemma}(\cite[Lemma 3.1]{petcu2009some})\label{lemma:a2}
  For $\nabla_h \cdot \overline f=0$, one has
  \begin{align*}
     &\| f\cdot \nabla_h g \|\leq C\|f\|_{H^{1}} \|g\|_{H^1} ^{\frac12} \|g\|_{H^{2}}^{\frac12},
     \\
     &\| w(f)\partial_z g\|\leq C\| f\|_{H^{1}}^{\frac12} \| f\|_{H^{2}}^{\frac12} \|g\|_{H^{1}}^{\frac12} \|g\|_{H^{2}}^{\frac12}.
\end{align*}
\end{lemma}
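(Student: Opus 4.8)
The plan is to reconstruct the two inequalities from standard anisotropic estimates for the hydrostatic nonlinearity, writing points of $\mathbb T^3$ as $(x,y,z)$ with $(x,y)\in\mathbb T^2$ and $z\in\mathbb T_z$, and keeping track of the mixed norms $L^p_z H^s_{xy}$. For the first bound the hydrostatic structure plays no role: by the three-dimensional Sobolev embedding $H^1(\mathbb T^3)\hookrightarrow L^6(\mathbb T^3)$, the interpolation inequality $\|v\|_{L^3}\le\|v\|_{L^2}^{1/2}\|v\|_{L^6}^{1/2}$, and Hölder's inequality with exponents $6$ and $3$,
\[
\|f\cdot\nabla_h g\|\le\|f\|_{L^6}\|\nabla_h g\|_{L^3}\le C\|f\|_{H^1}\|\nabla_h g\|_{L^2}^{1/2}\|\nabla_h g\|_{L^6}^{1/2}\le C\|f\|_{H^1}\|g\|_{H^1}^{1/2}\|g\|_{H^2}^{1/2},
\]
where in the last step $\|\nabla_h g\|_{L^6}\le C\|\nabla_h g\|_{H^1}\le C\|g\|_{H^2}$.

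The second bound is where the hypothesis $\nabla_h\cdot\overline f=0$ is used: it guarantees $\int_{\mathbb T_z}\nabla_h\cdot f\,d\tilde z=2\pi\,\nabla_h\cdot\overline f=0$, so that $w(f)(x,y,z)=-\int_0^z\nabla_h\cdot f\,d\tilde z$ is a genuine $2\pi$-periodic function of $z$ and, in particular, $\sup_z|w(f)(x,y,z)|\le\int_{\mathbb T_z}|\nabla_h\cdot f(x,y,z)|\,dz=:\phi(x,y)$. Applying Hölder in $(x,y)$ with exponents $2,2$ gives
\[
\|w(f)\,\partial_z g\|^2\le\int_{\mathbb T^2}\phi(x,y)^2\Big(\int_{\mathbb T_z}|\partial_z g|^2\,dz\Big)dx\,dy\le\|\phi\|_{L^4(\mathbb T^2)}^2\,\Big\|\int_{\mathbb T_z}|\partial_z g|^2\,dz\Big\|_{L^2(\mathbb T^2)}.
\]
For the first factor I would apply Minkowski's integral inequality in $z$, then the two-dimensional Gagliardo–Nirenberg inequality $\|v\|_{L^4(\mathbb T^2)}^2\le C\|v\|_{L^2(\mathbb T^2)}\|v\|_{H^1(\mathbb T^2)}$ on each horizontal slice, followed by Cauchy–Schwarz in $z$, to obtain $\|\phi\|_{L^4(\mathbb T^2)}^2\le C\|\nabla_h\cdot f\|_{L^2(\mathbb T^3)}\|\nabla_h\cdot f\|_{H^1(\mathbb T^3)}\le C\|f\|_{H^1}\|f\|_{H^2}$. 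For the second factor, Minkowski's integral inequality in $z$ together with the same slicewise Gagliardo–Nirenberg inequality applied to $\partial_z g$ and Cauchy–Schwarz in $z$ give $\big\|\int_{\mathbb T_z}|\partial_z g|^2\,dz\big\|_{L^2(\mathbb T^2)}\le C\|\partial_z g\|_{L^2(\mathbb T^3)}\|\partial_z g\|_{L^2_z H^1_{xy}}\le C\|g\|_{H^1}\|g\|_{H^2}$. Multiplying the two estimates and taking square roots yields the second inequality.

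I do not anticipate a deep obstacle: the only genuine care needed is the bookkeeping with the anisotropic norms $L^p_z H^s_{xy}$ and choosing the correct direction in which to apply Minkowski's integral inequality. The periodic Gagliardo–Nirenberg inequalities carry the usual harmless lower-order terms, which are absorbed into the $H^1$ and $H^2$ norms on the torus, so they do not affect the final form of the estimates.
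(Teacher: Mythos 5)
Your proof is correct and is essentially the standard anisotropic argument behind the cited result (\cite[Lemma 3.1]{petcu2009some}): the paper itself only quotes the lemma, and your reconstruction — 3D Sobolev/H\"older/interpolation for $f\cdot\nabla_h g$, and for $w(f)\partial_z g$ the pointwise bound $\sup_z|w(f)|\le\int_{\mathbb T_z}|\nabla_h\cdot f|\,dz$ (valid precisely because $\nabla_h\cdot\overline f=0$ makes $w$ periodic) followed by Minkowski, slicewise 2D Ladyzhenskaya, and Cauchy--Schwarz in $z$ — matches that source's proof in all essentials. All exponents and the final powers of the $H^1$ and $H^2$ norms check out.
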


\section{Limit Theorems from Stochastic Analysis}
We recall the following law of large numbers (LLN) and the central limit theorem (CLT) from \cite{cialenco2011parameter}. These will be used in Section~\ref{sec:proof} to prove the consistency and asymptotic normality of the proposed parameters. 

\begin{lemma}[The Law of Large Numbers, {\cite[Lemma~2.2]{cialenco2011parameter}}]\label{lemma:LLN}
	Let $(\xi_n)_{n \geq 1}$ be a sequence of random variables and $(b_n)_{n \geq 1}$ be an increasing sequence of positive numbers such that $\lim _{n \rightarrow \infty} b_n=+\infty$, and
	$$
	\sum_{n=1}^{\infty} \frac{\operatorname{Var} \xi_n}{b_n^2}<\infty.
	$$
	(i) If the random variables $(\xi_n)_{n \geq 1}$ are independent, then
	$$
	\lim _{n \rightarrow \infty} \frac{\sum_{k=1}^n\left(\xi_k-\mathbb{E} \xi_k\right)}{b_n}=0 \quad a.s..
	$$
	(ii) If $(\xi_n)_{n \geq 1}$ are merely uncorrelated random variables, then
	$$
	\lim _{n \rightarrow \infty} \frac{\sum_{k=1}^n\left(\xi_k-\mathbb{E} \xi_k\right)}{b_n}=0 \quad \text{in probability}.
	$$
\end{lemma}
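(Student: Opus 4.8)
The plan is to deduce both statements from two classical building blocks: Kolmogorov's convergence criterion for series of independent random variables, and Kronecker's lemma (if $\sum_n x_n$ converges and $0<c_n\uparrow\infty$, then $c_n^{-1}\sum_{k=1}^n c_k x_k\to 0$). The point is that the normalized centered sum $b_n^{-1}\sum_{k=1}^n(\xi_k-\mathbb E\xi_k)$ is exactly the Kronecker average of the summands $\eta_k:=(\xi_k-\mathbb E\xi_k)/b_k$ against the weights $b_k$, so controlling it reduces to controlling the series $\sum_k\eta_k$.

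\textbf{Part (i).} I would set $\eta_n:=(\xi_n-\mathbb E\xi_n)/b_n$, which is a sequence of independent, mean-zero random variables satisfying
\[
\sum_{n=1}^\infty \Var(\eta_n)=\sum_{n=1}^\infty\frac{\Var(\xi_n)}{b_n^2}<\infty .
\]
By Kolmogorov's one-series theorem the series $\sum_{n=1}^\infty\eta_n$ then converges almost surely. Since $b_n\uparrow\infty$, applying Kronecker's lemma pathwise (on the event of convergence, which has probability one) gives
\[
\frac1{b_n}\sum_{k=1}^n b_k\eta_k=\frac1{b_n}\sum_{k=1}^n(\xi_k-\mathbb E\xi_k)\longrightarrow 0\qquad a.s.,
\]
which is the assertion.

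\textbf{Part (ii).} Here I would argue in $L^2$. Writing $S_n:=\sum_{k=1}^n(\xi_k-\mathbb E\xi_k)$ and using that the $\xi_k$ are uncorrelated so that all cross terms drop out,
\[
\mathbb E\!\left[\Big(\frac{S_n}{b_n}\Big)^2\right]=\frac1{b_n^2}\sum_{k=1}^n\Var(\xi_k).
\]
Now apply Kronecker's lemma to the \emph{deterministic} convergent series $\sum_{k\ge 1}\Var(\xi_k)/b_k^2$ with the increasing divergent weights $c_k=b_k^2$, obtaining $b_n^{-2}\sum_{k=1}^n\Var(\xi_k)\to 0$. Hence $S_n/b_n\to 0$ in $L^2(\Omega)$, and a fortiori in probability.

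\textbf{Main obstacle.} There is no genuine technical difficulty; the proof is a packaging of standard results (this is why the statement is quoted from \cite{cialenco2011parameter}). The only substantive point is the gap between (i) and (ii): for merely uncorrelated (orthogonal) summands the series $\sum_n\eta_n$ need not converge almost surely — only in $L^2$ — so Kronecker's lemma can only be invoked in the mean-square sense, which is precisely why (ii) yields convergence in probability rather than a.s. An almost-sure conclusion in the uncorrelated case would require an additional Rademacher–Menshov type hypothesis, which is not assumed and not needed for the applications in Sections~\ref{sec:consistency}--\ref{sec:normality}.
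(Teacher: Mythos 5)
Your proof is correct: the paper does not prove this lemma but simply quotes it from \cite[Lemma~2.2]{cialenco2011parameter}, and your argument (Kolmogorov's one-series theorem plus Kronecker's lemma for (i), the $L^2$/orthogonality computation combined with Kronecker's lemma applied to the deterministic series for (ii)) is exactly the classical proof underlying that reference. Your closing remark correctly identifies why the uncorrelated case only yields convergence in probability.
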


\begin{lemma}[CLT for Stochastic Integrals, {\cite[Lemma~2.3]{cialenco2011parameter}}]\label{lemma:CLT}
	Let $\mathcal S = (\Omega,\mathcal F,\mathbb P, \{\mathcal F_t\}_{t\geq 0}, \{W_{\bk}\})$ be a stochastic basis. Suppose that $\sigma_{\bk} \in L^2(\Omega; L^2([0,T]))$ is a sequence of real-valued predictable processes such that
	\[
	\lim\limits_{N\to \infty} \frac{\sum\limits_{\bk\in\mathbb Z^d, 1\leq |\bk|\leq N} \int_0^T |\sigma_{\bk}|^2 dt}{\sum\limits_{\bk\in\mathbb Z^d, 1\leq |\bk|\leq N} \Eb\int_0^T |\sigma_{\bk}|^2 dt} = 1 \quad \text{in probability}. 
	\]
	Then 
	\[
	\lim\limits_{N\to \infty} \frac{\sum\limits_{\bk\in\mathbb Z^d, 1\leq |\bk|\leq N} \int_0^T \sigma_{\bk} dW_{\bk}}{\left(\sum\limits_{\bk\in\mathbb Z^d, 1\leq |\bk|\leq N} \Eb\int_0^T |\sigma_{\bk}|^2 dt\right)^{1/2}}
	\]
	converges in distribution to a standard normal random variable as $N \to \infty$.
\end{lemma}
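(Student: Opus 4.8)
The plan is to represent the numerator as a single Brownian motion evaluated along a random time change whose normalized value tends to a deterministic quantity, and then conclude by a soft scaling argument. Write
\[
M_N := \sum_{1\le|\bk|\le N}\int_0^T \sigma_{\bk}\,dW_{\bk}, \qquad B_N := \sum_{1\le|\bk|\le N}\Eb\int_0^T|\sigma_{\bk}|^2\,dt, \qquad \langle M\rangle_N := \sum_{1\le|\bk|\le N}\int_0^T|\sigma_{\bk}|^2\,dt,
\]
so that $B_N$ is a deterministic increasing sequence and the claim is that $M_N/\sqrt{B_N}$ converges in distribution to $\mathcal N(0,1)$. Since the $W_{\bk}$ are mutually independent, each $\int_0^T\sigma_{\bk}\,dW_{\bk}$ is a martingale increment, so, fixing any enumeration $\bk_1,\bk_2,\dots$ of the lattice points, $(M_N)_{N\ge1}$ is a square-integrable (discrete-parameter) martingale, and the hypothesis is precisely that $\langle M\rangle_N/B_N\to1$ in probability.

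First I would interpolate $(M_N)$ into a continuous-time continuous local martingale $\widetilde M$: on the $j$-th unit interval $[j-1,j]$ set $\widetilde M(j-1+\theta)=M_{j-1}+\int_0^{\theta T}\sigma_{\bk_j}(s)\,dW_{\bk_j}(s)$ for $\theta\in[0,1]$, run against the filtration generated by the already-used Brownian motions together with $W_{\bk_j}$ up to the current time. Then $\widetilde M(j)=M_j$, and because the increments are driven by distinct independent $W_{\bk}$ (hence have no cross-brackets) one has $\langle\widetilde M\rangle(j)=\langle M\rangle_j$. By the Dambis--Dubins--Schwarz theorem there exists, possibly on an enlarged probability space, a standard Brownian motion $\beta$ with $\widetilde M(t)=\beta\big(\langle\widetilde M\rangle(t)\big)$ for all $t\ge0$; in particular $M_N/\sqrt{B_N}=\beta(\langle M\rangle_N)/\sqrt{B_N}$.

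It then remains to prove the elementary fact that if $\beta$ is a standard Brownian motion and $(\tau_N)$ are random times with $\tau_N/B_N\to1$ in probability (with $B_N\uparrow\infty$, which holds in all the applications of this lemma), then $\beta(\tau_N)/\sqrt{B_N}$ converges in distribution to $\mathcal N(0,1)$. Decompose
\[
\frac{\beta(\tau_N)}{\sqrt{B_N}}=\frac{\beta(B_N)}{\sqrt{B_N}}+\frac{\beta(\tau_N)-\beta(B_N)}{\sqrt{B_N}}.
\]
The first term is exactly $\mathcal N(0,1)$ in law by Brownian scaling. For the second, fix $\varepsilon>0$: on the event $\{|\tau_N-B_N|\le\varepsilon B_N\}$, whose probability tends to $1$, one has $|\beta(\tau_N)-\beta(B_N)|/\sqrt{B_N}\le\sup_{|u-1|\le\varepsilon}|\beta(B_N u)-\beta(B_N)|/\sqrt{B_N}$, which by Brownian scaling is equal in law to $\sup_{|u-1|\le\varepsilon}|\beta(u)-\beta(1)|$, and this tends to $0$ almost surely as $\varepsilon\to0$ by path continuity. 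A routine $\varepsilon$--$\delta$ argument then yields that the second term tends to $0$ in probability, and Slutsky's theorem finishes the proof. (If instead $\sum_{\bk}\Eb\int_0^T|\sigma_{\bk}|^2\,dt<\infty$ so that $B_N$ stays bounded, the conclusion still follows: monotone convergence and the hypothesis force $\langle M\rangle_\infty=B_\infty$ a.s., whence $M_\infty=\beta(B_\infty)$ is exactly $\mathcal N(0,B_\infty)$ by the same representation.)

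The main obstacle is the construction in the second step: one must set up the interpolating filtration so that $\widetilde M$ is a genuine continuous local martingale to which Dambis--Dubins--Schwarz applies, and verify that its bracket at integer times coincides with $\langle M\rangle_N$ — this last point is exactly where independence of the family $\{W_{\bk}\}$ enters, as it eliminates the cross-bracket terms. Everything downstream of the time-change representation is soft, using only Brownian scaling, path continuity, and Slutsky. As an alternative to Dambis--Dubins--Schwarz one could instead apply a martingale central limit theorem directly to the triangular array $\{\int_0^T\sigma_{\bk}\,dW_{\bk}/\sqrt{B_N}\}_{1\le|\bk|\le N}$, checking the conditional-variance condition (which is the hypothesis) and a conditional Lindeberg condition; the time-change route is, however, the most economical.
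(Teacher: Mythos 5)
The paper offers no proof of this lemma (it is quoted from \cite[Lemma~2.3]{cialenco2011parameter}), so the comparison is only against your argument on its own terms. Your overall strategy --- represent the partial sums as a time-changed Brownian motion via Dambis--Dubins--Schwarz and conclude by Brownian scaling plus Slutsky --- is the standard and correct route, and everything downstream of the time-change representation (the decomposition $\beta(\tau_N)/\sqrt{B_N}=\beta(B_N)/\sqrt{B_N}+(\beta(\tau_N)-\beta(B_N))/\sqrt{B_N}$, the distributional identity for the modulus-of-continuity term, and the $\varepsilon$--$\delta$ step) is fine. The genuine gap is exactly where you located it: the concatenation into a single continuous martingale $\widetilde M$ fails for general $\{\mathcal F_t\}$-predictable integrands. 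The filtration you propose at artificial time $j-1+\theta$ (``the already-used Brownian motions together with $W_{\bk_j}$ up to the current time'') does not make $\widetilde M$ adapted: the partial sum $M_{j-1}=\sum_{i<j}\int_0^T\sigma_{\bk_i}\,dW_{\bk_i}$ is in general only $\mathcal F_T$-measurable, because each $\sigma_{\bk_i}(s)$ is $\mathcal F_s$-measurable and may depend on \emph{all} of the Brownian motions, not just on $W_{\bk_i}$; for the same reason $\sigma_{\bk_j}$ need not be predictable for that filtration. Enlarging the filtration so as to contain $\mathcal F_T$ at artificial time $j-1$ destroys the martingale property of the next block. The construction does work when each $\sigma_{\bk}$ is adapted to $W_{\bk}$ alone --- which happens to hold in every application of the lemma in this paper, where $\sigma_{\bk}$ is a functional of the Ornstein--Uhlenbeck mode driven by $W_{\bk}$ --- but not under the hypotheses as stated.

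The repair is cheap and you should make it: drop the concatenation and apply Dambis--Dubins--Schwarz separately for each $N$ to the continuous square-integrable $\{\mathcal F_t\}$-martingale $M^N_t=\sum_{1\le|\bk|\le N}\int_0^t\sigma_{\bk}\,dW_{\bk}$, $t\in[0,T]$, whose bracket is $\sum_{1\le|\bk|\le N}\int_0^t|\sigma_{\bk}|^2\,ds$ since the cross-brackets vanish by independence of the $W_{\bk}$. This gives $M^N_T=\beta^N\bigl(\langle M^N\rangle_T\bigr)$ with an $N$-dependent Brownian motion $\beta^N$ (after the usual enlargement, since $\langle M^N\rangle_T<\infty$), and your subsequent argument goes through verbatim: $\beta^N(B_N)/\sqrt{B_N}$ is exactly standard normal for every $N$, and $\mathbb P\bigl(\sup_{|u-1|\le\varepsilon}|\beta^N(u)-\beta^N(1)|>\delta\bigr)$ is a single $N$-independent quantity, so nothing in the $\varepsilon$--$\delta$ step requires the Brownian motions to coincide across $N$. (The alternative you mention, a triangular-array martingale CLT, would additionally require a conditional Lindeberg condition that is not implied by the stated hypothesis; the per-$N$ time-change route avoids this entirely, which is precisely the advantage of working with continuous martingales.)
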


\bibliographystyle{alpha}
\bibliography{Reference4}
\end{document}